\definecolor{Myred}{rgb}{0.5,0,0}
\definecolor{Mygreen}{rgb}{0,0.5,0}
\definecolor{Myblue}{rgb}{0,0,0.5}
\definecolor{MyLightYellow}{rgb}{0.98, 0.93, 0.36}
\DeclarePairedDelimiter{\ceil}{\lceil}{\rceil}
\DeclarePairedDelimiter{\floor}{\lfloor}{\rfloor}
\newtheorem{definition}{Definition}
\newtheorem{proposition}{Proposition}
\theoremstyle{plain}
\newtheorem{example}{Example}[] % same for example numbers
\newtheorem{theorem}{Theorem}
\newtheorem{corollary}{Corollary}
\newtheorem{lemma}{Lemma}
\newcommand{\db}{\overset{\Delta}{=}}
\newcommand{\A}{\mathcal{A}}
\renewcommand{\O}{\mathcal{O}}
\newcommand{\I}{\mathcal{I}}
\newcommand{\X}{\mathcal{X}}
\newcommand{\W}{\mathcal{W}}
\newcommand{\R}{\mathcal{R}}
\newcommand{\C}{\mathcal{C}}
\renewcommand{\L}{\mathcal{L}}
\newcommand{\x}{\bm{x}}
\newcommand{\y}{\bm{y}}
\newcommand{\z}{\bm{z}}
\renewcommand{\b}{\bm{\beta}}
\renewcommand{\a}{\bm{a}}
\renewcommand{\c}{\bm{c}}
\newcommand{\p}{\tb{p}}
\newcommand{\pp}{\tb{p'}}
\newcommand{\q}{\tr{q}}
\newcommand{\qq}{\tr{q'}}
\renewcommand{\r}{\tb{\zeta}}
\newcommand{\rr}{\tb{\zeta'}}
\newcommand{\s}{\tr{\eta}}
\renewcommand{\ss}{\tr{\eta'}}
\newcommand{\e}{\mathtt{e}}
\newcommand{\f}{\mathtt{f}}
\renewcommand{\d}{{\delta}}
\newcommand{\n}{\tb{\mathtt{a}}}
\renewcommand{\o}{\tr{\mathtt{b}}}
\newcommand{\oo}{\tr{\mathtt{c}}}
\newcommand{\nn}{\tb{\mathtt{d}}}
\renewcommand{\A}{\mathtt{A}}
\newcommand{\B}{\mathtt{B}}
\newcommand{\tb}{\textcolor{blue}}
\newcommand{\tr}{\textcolor{Myred}}
\newcommand{\tg}{\textcolor{Mygreen}}
\theoremstyle{definition}
\begin{document}

\title{Characterizing Oscillations in Heterogeneous Populations of Coordinators and Anticoordinators}
\author{{Pouria Ramazi and Mohammad Hossein Roohi}
\thanks{This paper is submitted in part for presentation at, and publication in the proceedings of, the 61th IEEE Conference on Decision and Control, Cancun, Mexico, December 6-9, 2022 \cite{conferenceVersion}.
This paper additionally extends the Introduction section and includes Section \ref{sec_stability} on the stability analysis, Section \ref{sec_revisitingExample2} on revisiting the example from the stability analysis perspective, Section \ref{sec_synchronousUpdates} on the case with synchronous updates, the Conclusion section, Lemma \ref{lem_updateRule2} in Appendix, and the proof of the results.  
M. H. Roohi is with the Department of Electrical and Computer Engineering, University of Alberta, Canada 
{\tt\small roohi@ualberta.ca}.
P. Ramazi is with
the Department of Mathematics and Statistics, Brock University, Canada,
{\tt\small p.ramazi@gmail.com}.}}%
	\maketitle 
	\begin{abstract}
    Oscillations often take place in populations of decision makers that are either a \emph{coordinator}, who takes action only if enough others do so, or an \emph{anticoordinator}, who takes action only if few others do so.
    Populations consisting of exclusively one of these types are known to reach an equilibrium, where every individual is satisfied with her decision.  
    Yet it remains unknown whether oscillations take place in a population consisting of both types, and if they do, what features they share.
    We study a well-mixed population of individuals, which are either a coordinator or anticoordinator, each associated with a possibly unique threshold and initialized with the strategy $\A$ or $\B$. 
    At each time, an agent becomes active to update her strategy based on her threshold: 
    an active coordinator (resp. anticoordinator) updates her strategy to $\A$ (resp. $\B$) if the portion of other agents who have chosen $\A$ exceeds (falls short of) her threshold, and updates to $\B$ (resp. $\A$) otherwise.
    We define the state of the population dynamics as the distribution over the thresholds of those who have chosen $\A$.
    We show that the population can admit several minimally positively invariant sets, where the solution trajectory oscillates. 
    We explicitly characterize a class of positively invariant sets, prove their invariance, and provide a necessary and sufficient condition for their stability. 
    Our results highlight the possibility of non-trivial, complex oscillations in the absence of noise and population structure and shed light on the reported oscillations in nature and human societies.
	\end{abstract}	
\section{Introduction}
Oscillatory phenomena are observed in nature and human societies:
periodic patterns in biology \cite{hess2000periodic},
fluctuations in market values \cite{li2008gdp}, 
unsettlements in social emotions, 
non-fixation in fashion trends, and
twist of power among political parties \cite{lawrence1898oscillations}.
In most such situations, individuals have to choose between one out of two actions and either pick an action only if enough others do so, which are referred to as \emph{coordinating individuals} or simply \emph{coordinators}, or pick an action only if not many others do so, which are referred to as \emph{anticoordinating individuals} or simply \emph{anticoordinators}. 
Followers in technology markets avoid risk by producing common products, whereas innovators perceive their benefits in building up a monopoly by developing new or rare products \cite{helpman1992, collins2015}. 
``Activator'' and ``repressor'' cells in synthetic microbial consortia, respectively increase and decrease gene expression if the transcription is low and high, resulting in a positive and negative feedback loop \cite{chen2015emergent}.
Populations consisting exclusively of one of these two types of individuals are known to eventually reach an equilibrium state where individuals are satisfied with their decisions \cite{Ramazi2016a}. 
Therefore, it is only the coexistence of the two that may explain the observed oscillations.
However, not every coexistence of the two leads to an oscillation, and even if it does, the characteristics of the oscillation is unknown.  

Scholars from various disciplines have studied populations of decision-making individuals \cite{barabasi1999,schlag1998,barreiro2018constrained,cheng2014finite,lee2008,funk2009}. %better to be mainly from control
\emph{Evolutionary game theory}, a powerful framework to analyze these populations  \cite{nowak2006,govaert2020zero,zhu2016evolutionary,como2020imitation,zhao2016matrix}, models the above two decision-makers as individuals who play games against each other by choosing one out of two strategies $\A$ and $\B$, earn payoffs according to their payoff matrices, and revise their strategies over time according to the \textit{(myopic) best response update rule} \cite{bopardikar2017convergence,ghaderi2014opinion,ramazi2015,le2020heterogeneous}.
The update rule dictates individuals to pick the strategy maximizing their payoffs against their opponents. 
Therefore, if an individual's payoff matrix is in the form of a \emph{coordination game}, where playing the same strategy as that of the opponent results in a higher payoff, she will be a coordinator, and if it is in the form of an \emph{anticoordination game}, where playing the strategy opposite to that of the opponent results in a higher payoff, she will be an anticoordinator.
In a different framework, coordinators (resp. anticoordinators) can be captured by \emph{linear threshold models} where every individual has a threshold and chooses $\A$ only if the number of others who have chosen $\A$ exceeds (resp. falls short of) her threshold \cite{granovetter1978threshold}. 
In either of these models, a realistic population is \emph{heterogeneous} in individuals' thresholds or payoff matrices, rather than \emph{homogeneous}. 
%Researchers have studied the asymptotic behavior of structured heterogeneous populations consisting of exclusively one of the two types.
In \cite{adam2012behavior,adam2012threshold}, networks of all coordinators or all anticoordinators who update their strategies synchronously, that is all at a time, are shown to reach a limit cycle of length at most two.
In \cite{Ramazi2016a}, any network of all coordinators or all anticoordinators who update their strategies asynchronously, that is one at a time, is guaranteed to equilibrate.
In \cite{vanelli2020games,arditti2021equilibria}, sufficient conditions for existence of and finite-time convergence to equilibrium in heterogeneous mixed populations of coordinators and anticoordinators are shown.
Although structured populations are often more popular as they model the heterogeneity in neighbors, many real-world populations are well-mixed, where individuals know the total number or ratio of others who have chosen a particular strategy.
In technology markets, for example, firms typically know how many others are producing a particular product, and individuals deciding whether to volunteer for a non-gevernmental organization (NGO) may have access to the total number of existing volunteers. 
Note that the exact number/ratio is not required, and just knowing whether it is greater than their thresholds suffices for them to accordingly update their choices. 
Well-mixed heterogeneous populations of anticoordinators updating asynchronously are known to equilibrate regardless of the initial condition \cite{ramazi2018}\footnote{The population state either equilibrates or fluctuates between two states in the long-run. 
However, the fluctuations are simply because each individual updates based on the total number of $\A$-players in the population, including herself, rather than the total number of $\A$-players other than herself.}.
Well-mixed heterogeneous populations of coordinators updating asynchronously also always reach an equilibrium state, yet it is not necessarily unique, even for the same initial condition \cite{ramaziUnderReview}. 
%Moreover, for both cases, one can always identify a \emph{benchmark} threshold such that in the long-run, all individuals with higher thresholds play one same strategy, say $\A$, and all individuals with lower thresholds play the opposite strategy, $\B$.
%behave similarly, except that they may reach  however reach a unique equilibrium state regardless of the initial condition.
However, to best of our knowledge, there is no in-depth study on the non-converging and fluctuating behavior of heterogeneous mixed-populations of both coordinators and anticoordinators, despite their inevitable coexistence in many real-world situations \cite{bodine2013}.
This missing piece in the literature is perhaps key in understanding many oscillatory phenomena.

We study a well-mixed heterogeneous population comprising both coordinators and anticoordinators who play either of the strategies $\A$ or $\B$ and update asynchronously over time.
We take the distribution of $\A$-players over the coordinators and anticoordinators with different thresholds as the state of the system.
First, via a numerical example, we show that a single population can possess an equilibrium as well as several non-singleton positively invariant sets where the state undergoes never-ending fluctuations.
Second, for the first time, we explicitly identify a group of positively invariant sets that are either a singleton, resulting in an equilibrium, or non-singleton, where the solution trajectory fluctuates between its states. 
% Interestingly, we find that like exclusive populations of either all coordinators or all anticoordinators, one can always identify four benchmark thresholds $\p, \q, \qq, \pp$ in the invariant set such that all coordinators (resp. anticoordinators) with a threshold lower (resp. higher) than $\pp$ (resp. $\p$) always play $\A$, all anticoordinators (resp. coordinators) with a threshold higher (resp. lower) than $\qq$ (resp. $\q$) always play $\B$, and the rest of the individuals persistently switch strategies.
% The benchmarks all recursively depend on each other, resulting in a loop that complexifies their identification.
% We break this loop systematically and prove the invariance of our proposed sets.
Third, we study the notion of stability for the invariant sets and derive a necessary and sufficient stability condition.
Fourth, investigate the synchronous version of the dynamics.
The results shed light on observed oscillations in populations of binary decision-makers, and serve as a stepping stone towards providing incentives or establishing regulatory polices in technology market, social and cultural learning, and health plans \cite{bodine2013, legare2015, sato2016}, to either settle or modify the oscillations. 

%%%%%%%%%%%%%%%%%%%%%%%%%%%%%%%%%%%%%%%%%%%%%%%%%%%%%%%%%%%%%%%%%%%%%%%%%%%
\section{Model}
Consider a well-mixed population of $n\geq 2$ agents who play either of the two strategies $\A$ or $\B$, correspondingly earn payoffs, and over a discrete time sequence $t \in \mathbb{Z}_{\geq 0}$ revise their strategies to maximize their payoffs. 
%Let $[i] = \{1,2,\ldots, i\}$ for $i\in\mathbb{Z}_{\geq 0}$, where $[0]=\emptyset$.
More specifically, the four possible payoffs for an agent $j\in\{1,\ldots,n\}$ against her opponent are summarized by the payoff matrix 
\begin{equation*}
    \bordermatrix{
	& \A & \B \cr \A & a_j & b_j \cr \B & c_j & d_j
	},\qquad a_j,b_j,c_j,d_j \in \mathbb{R},
	\label{eq:payoffMatrix}
\end{equation*}
where, for example, $b_j$ is the payoff when agent $j$ plays $\A$, and her opponent plays $\B$.
Given agent $j$, let $A_{j}$ (resp. $B_j$) denote the total number of $\A$-playing (resp. $\B$-playing) agents in the remaining of the population, i.e., excluding agent $j$. 
Then the accumulated payoff to agent $j$ against the remaining of the population is $a_j A_j+b_j B_j$ when she plays $\A$ and $c_j A_j+d_j B_j$ otherwise. 
At each time step $t\in\mathbb{Z}_{\geq0}$, an agent $j$ becomes active to update her strategy at time $t+1$ based on the \emph{(myopic) best-response update rule}, dictating that she chooses the strategy that maximizes her payoff against the remaining population.
Denote the strategy of agent $j$ by $s_j\in\{\A,\B\}$. 
Then her strategy at time $t+1$ would be 
\begin{equation*}
	s_j(t+1)=\begin{cases}
	    \A &a_j A_j(t)+b_j B_j(t)\geq c_j A_j(t)+d_j B_j(t)\\
	    \B &a_j A_j(t)+b_j B_j(t) < c_j A_j(t)+d_j B_j(t)\\
	\end{cases}
\end{equation*}
Knowing $B_j = n-A_j-1$, we simplify the expressions as
\begin{equation}
	s_j(t+1)=
	\begin{cases}
	    \A &\sigma_j A_j(t)\geq \gamma_j (n-1)\\
	    \B &\sigma_j A_j(t)< \gamma_j (n-1)\\
	\end{cases},
	\label{eq:model}
\end{equation}
where $\sigma_j \overset{\Delta}{=} a_j-c_j+d_j-b_j$ and $\gamma_j \overset{\Delta}{=} d_j-b_j$.
For $\sigma_j\neq 0$, define the \emph{temper} of agent $j$ as $\tau^j\overset{\Delta}{=} \gamma_j/\sigma_j (n-1)$.
Note that this is different from the \emph{threshold} of agent $j$, defined as $\gamma_j/\sigma_j$, which is often used in the literature \cite{riehl2018survey}.
Now if $\sigma_j>0$, the update rule becomes
\begin{equation}
	s_j(t+1)=
	\begin{cases}
	    \A &A_j(t)\geq \tau^j\\
	    \B &A_j(t)< \tau^j\\
	\end{cases}.
	\label{eq:coor}
\end{equation}
Namely, the agent plays $\A$ only if the number of other $\A$-players does not fall short of her temper.
We refer to these agents as \emph{coordinators}.
If $\sigma_j<0$, the update rule becomes
\begin{equation}
	s_j(t+1)=
	\begin{cases}
	    \A &A_j(t) \leq \tau^j\\
	    \B &A_j(t) > \tau^j\\
	\end{cases}.
	\label{eq:anti}
\end{equation}
Namely, the agent plays $\A$ only if the number of other $\A$-players does not exceed her temper. 
We refer to these agents as \emph{anticoordinators}.
For the case when $\tau^j \not\in [0,n-1]$ or $\sigma_j = 0$, update rule \eqref{eq:model} is equivalent to one of the above two cases.
So we assume that every agent in the population has some temper $\tau^j\in[0,n-1]$ and is either a coordinator or an anticoordinator.
Note that our setup is similar to that in \cite{Ramazi2016a}.

The agents may represent people aiming at establishing an NGO for a social service program. 
The individuals know how many others have volunteered (played $\A$) and accordingly decide to also volunteer (play $\A$) or not (play $\B$). 
Some may consider the NGO a failure or burdensome if the volunteers are few, and hence, would only join if enough others have joined (coordinators).
Others may find it a duty to join even if no one else does, but they also find it unnecessary for too many to join.
Hence, they would join only if less than enough others have joined (anticoordinators).
Another example is the co-existence of \emph{innovators} and
\emph{followers} in the technology market \cite{helpman1992}. 
Companies can either take an existing market from an entrenched technology (play $\A$) or take on competitors with any form of innovation (play $\B$). 
Some companies are conservative in the sense that enough number of other sectors focusing on a technology convinces them to follow (coordinators).
Others tend to build up a monopoly, so they change their strategy to innovate upon the presence of rivals developing the same technology (anticoordinators).

We categorize all anticoordinators (resp. coordinators) with the same temper as the same \emph{type}, and assume that there are all together $b\geq1$ (resp. $b'\geq1$) different types of anticoordinators (resp. coordinators).
We label the anticoordinator (resp. coordinator) types in the descending (resp. ascending) order of their tempers by $1,2,\ldots, b$ (resp. $1,2,\ldots, b'$). 
So we have a total of $b+b'$ types of agents. 
Denote the temper of an anticoordinator (resp. coordinator) of type $i$ by $\tau_i$ (resp. $\tau'_i)$.
Then we have
\begin{equation} \label{temperOrdering}
    \tau_1 > \tau_2 >\ldots > \tau_b, \qquad
    \tau'_{b'} > \tau'_{b'-1} >\ldots > \tau'_1.
\end{equation}
We take the distribution of the $\A$-players among these types as the \textit{state} of the system:
\begin{equation*}
	\x(t) 
	\overset{\Delta}{=} \left(x_1(t),\ldots,x_b(t),x'_{b'}(t),\ldots,x'_1(t)\right),
\end{equation*}
where $x_i$ (resp. $x'_i$) denotes the number of $\A$-playing anticoordinators (resp. coordinators) of type $i$. 
Clearly, $\x(t)$ lies in the state space
\begin{equation*}
	\begin{aligned}
	\mathcal{X} \overset{\Delta}{=} 
	\Big\{ (&x_1,\ldots,x_{b},x'_{b'},\ldots,x'_1)\in     \mathbb{Z}^{b+b'}_{\geq0}\,\Big|\, \\
	&\ x_i \leq n_i \forall i \in\{1,\ldots,b\},  
	x'_i \leq n'_i \forall i\in\{1,\ldots,b'\}\Big\},
	\end{aligned}
\end{equation*}
where $n_i\geq1$ (resp. $n'_i\geq1$) is the number of anticoordinators (resp. coordinators) of type $i$.
Let $A(\x)$ denote the total number of $\A$-players in the population at state $\x$. 
One can write update rules \eqref{eq:coor} and \eqref{eq:anti} with respect to $A(\x(t))$ and the tempers of the types according to Lemma \ref{lem_updateRule} in the appendix.

The update rules \eqref{eq:coor} and \eqref{eq:anti} together with the activation sequence of the agents govern the dynamics of $\x(t)$, called the \textit{population dynamics}.
We do not impose any assumption on the activation sequence, although a realistic sequence might be generated by a random process. 
A \textit{positively invariant set} under the population dynamics is a set $\O\subseteq\X$ such that if $\x(0)\in\O$, then $\x(t)\in\O$ for all $t\in\mathbb{Z}_{\geq0}$ and under any activation sequence. 
Namely, the solution trajectory never leaves the set after entering it. 
Our goal is to identify these sets.
If $\O$ is a singleton, then it consists of a single equilibrium state, where the solution trajectory settles.
However, if $\O$ is a non-singleton and does not include an equilibrium, then the solution trajectory perpetually fluctuates between several states in $\O$, as shown in the following section.

%We make the mild assumption that the activation sequence is \textit{persistent}, that is, every agent becomes active infinitely many times \cite{Ramazi2016a}. 
%In the literature, an activation sequence is called \textit{persistent} if at time $t$ for every agent $i$ in the population there exists a finite  time $t \leq t_i$ such that the agent $i$ found the chance to be activated. 
%The activation sequence is \emph{random} if it is generated by a random process, i.e., the active agent at each time step is a random variable.
%The analysis of the population dynamics is much simpler when the activation sequence is random.
%This is because then one can construct a finite activation sequence, under which the strategy of certain agents can strait-forwardly be shown to become fixed, and on the other hand, the probability of such finite sequence showing up in the whole activation sequence is one.  
%However, we highlight that in the entire article, the activation sequence is only assumed to be persistent and not necessarily random, making the analysis challenging and the results more general.

%%%%%%%%%%%%%%%%%%%%%%%%%%%%%%%%%%%%%%%%%%%%%%%%%%
\section{Example and intuition}   \label{sec:example}
\begin{example}     \label{example1}
    Consider a population of $42$ agents, consisting of four anticoordinating and five coordinating types with tempers
    \begin{equation*}
        \scalebox{.99}{\text{$
        (\tau_1,\tau_2,\tau_3,\tau_4,
        \tau'_5,\tau'_4,\tau'_3,\tau'_2,\tau'_1) = 
        (\!\!\underbrace{18,9,8,7}_{\text{anticoordinating}}\!\!,
     \underbrace{25,19,14,10,5}_{\text{coordinating}}).
     $}}
    \end{equation*}
    The distribution of the population over the types is given by
	\begin{equation*}
	 \scalebox{.99}{\text{$
	 (n_1,n_2,n_3,n_4,
        n'_5,n'_4,n'_3,n'_2,n'_1) = 
        (\!\!\!\!\underbrace{4,3,1,3}_{\text{anticoordinating}}\!\!\!, 
     \underbrace{15,1,10,2,3}_{\text{coordinating}}).
     $}}
    \end{equation*}
     Figure~\ref{fig:InvariantSet1} shows the evolution of the state $\x$ for the initial condition $\x(0)=({\color{Myblue}4}, {\color{Mygreen}3}, {\color{Mygreen}0}, {\color{Mygreen}0}, {\color{Myred}0}, {\color{Myred}0}, {\color{Myred}0}, {\color{Mygreen}0}, {\color{Myblue}3})$.  
     All type-$1$ coordinators and anticoordinators have fixed their strategies to $\A$, and all type-5, 4, and 3 coordinators have fixed their strategies to $\B$. 
     The state fluctuates in a positively invariant set, where only type-2, 3, and 4 anticoordinators and type-2 coordinators may switch strategies. 
    %  The minimum number of $\A$-players in this set, minus one, i.e., 7-1=6, exceeds the temper of type-4 anticoordinators $\tr{\tau_4} = 7$, ensuring that they always play $\B$ in view of Lemma \ref{lem_updateRule2}.
     The minimum number of $\A$-players in this set, i.e., 7, exceeds the temper of type-1 coordinators plus one, i.e., $\tb{\tau'_1} + 1 = 6$, ensuring that they always play $\A$ in view of Lemma \ref{lem_updateRule2}.
     On the other hand, the maximum number of $\A$-players equaling 12, ensures all type-1 anticoordinators play $\A$ and all type-3, 4, and 5 coordinators play $\B$. 
     %Although this maximum exceeds the temper of type-2 anticoordinators, i.e., $\tg{\tau_2} = 9$, plus one, the agents of this type never all simultaneously play $\B$ as they themselves contribute to this maximum.   
     \begin{figure}[ht]
		\centering	\includegraphics[width=1\linewidth]{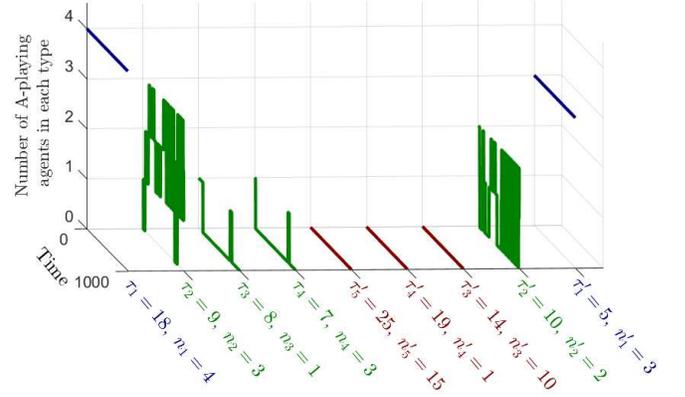}
        \caption{Evolution of agents' decisions in the first positively invariant set.
        Here, $\tb{\p=1}$, $\tr{\q=5}$, $\tr{\qq=3}$, and $\tb{\pp=1}$.
        The $\A$-fixed, $\B$-fixed, and wandering types are indicated by blue, red, and green colors respectively.}
		\label{fig:InvariantSet1}
	\end{figure} 
     
\begin{table}[ht]
  \begin{center}
  \caption{State transition in the first invariant set}
  \begin{tabular}{|r|c|c|}
	\hline 
	State of the system & Active agent  & $\begin{matrix} \text{Number of} \\ \text{$\A$-players}\end{matrix}$ \\  
	\hline 
	\cellcolor{MyLightYellow}
    \!\!$\x(1)=({\color{Myblue}4}, {\color{Mygreen}1}, {\color{Mygreen}1}, {\color{Mygreen}0}, {\color{Myred}0}, {\color{Myred}0}, {\color{Myred}0}, {\color{Mygreen}0}, {\color{Myblue}3})$ &   type-2 anticoordinator & 9\\ 
    $\x(2)=({\color{Myblue}4}, {\color{Mygreen}2}, {\color{Mygreen}1}, {\color{Mygreen}0}, {\color{Myred}0}, {\color{Myred}0}, {\color{Myred}0}, {\color{Mygreen}0}, {\color{Myblue}3})$ &  type-2 coordinator & 10\\ 
    $\x(3)=({\color{Myblue}4}, {\color{Mygreen}2}, {\color{Mygreen}1}, {\color{Mygreen}0}, {\color{Myred}0}, {\color{Myred}0}, {\color{Myred}0}, {\color{Mygreen}1}, {\color{Myblue}3})$ &  type-2 anticoordinator & 11\\ 
    $\x(4)=({\color{Myblue}4}, {\color{Mygreen}1}, {\color{Mygreen}1}, {\color{Mygreen}0}, {\color{Myred}0}, {\color{Myred}0}, {\color{Myred}0}, {\color{Mygreen}1}, {\color{Myblue}3})$ &  type-2 coordinator  & 10\\ 
    \cellcolor{MyLightYellow}$\x(5)=({\color{Myblue}4}, {\color{Mygreen}1}, {\color{Mygreen}1}, {\color{Mygreen}0}, {\color{Myred}0}, {\color{Myred}0}, {\color{Myred}0}, {\color{Mygreen}0}, {\color{Myblue}3})$ &  type-2 anticoordinator  & 9 \\

    $\x(6)=({\color{Myblue}4}, {\color{Mygreen}2}, {\color{Mygreen}1}, {\color{Mygreen}0}, {\color{Myred}0}, {\color{Myred}0}, {\color{Myred}0}, {\color{Mygreen}0}, {\color{Myblue}3})$ &  type-2 coordinator & 10\\ 
    $\x(7)=({\color{Myblue}4}, {\color{Mygreen}2}, {\color{Mygreen}1}, {\color{Mygreen}0}, {\color{Myred}0}, {\color{Myred}0}, {\color{Myred}0}, {\color{Mygreen}1}, {\color{Myblue}3})$ &  type-2 coordinator & 11\\ 
    $\x(8)=({\color{Myblue}4}, {\color{Mygreen}2}, {\color{Mygreen}1}, {\color{Mygreen}0}, {\color{Myred}0}, {\color{Myred}0}, {\color{Myred}0}, {\color{Mygreen}2}, {\color{Myblue}3})$ &  type-2 anticoordinator & 12 \\ 
    $\x(9)=({\color{Myblue}4}, {\color{Mygreen}1}, {\color{Mygreen}1}, {\color{Mygreen}0}, {\color{Myred}0}, {\color{Myred}0}, {\color{Myred}0}, {\color{Mygreen}2}, {\color{Myblue}3})$ &  type-2 anticoordinator& 11\\ 
    $\x(10)=({\color{Myblue}4}, {\color{Mygreen}0}, {\color{Mygreen}1}, {\color{Mygreen}0}, {\color{Myred}0}, {\color{Myred}0}, {\color{Myred}0}, {\color{Mygreen}2}, {\color{Myblue}3})$ &  type-3 anticoordinator& 10\\ 
    $\x(11)=({\color{Myblue}4}, {\color{Mygreen}0}, {\color{Mygreen}0}, {\color{Mygreen}0}, {\color{Myred}0}, {\color{Myred}0}, {\color{Myred}0}, {\color{Mygreen}2}, {\color{Myblue}3})$ &  type-2 coordinator& 9\\ 
    $\x(12)=({\color{Myblue}4}, {\color{Mygreen}0}, {\color{Mygreen}0}, {\color{Mygreen}0}, {\color{Myred}0}, {\color{Myred}0}, {\color{Myred}0}, {\color{Mygreen}1}, {\color{Myblue}3})$ &  type-2 coordinator& 8\\ 
    $\x(13)=({\color{Myblue}4}, {\color{Mygreen}0}, {\color{Mygreen}0}, {\color{Mygreen}0}, {\color{Myred}0}, {\color{Myred}0}, {\color{Myred}0}, {\color{Mygreen}0}, {\color{Myblue}3})$ &  type-4 anticoordinator& 7\\ 
    $\x(14)=({\color{Myblue}4}, {\color{Mygreen}0}, {\color{Mygreen}0}, {\color{Mygreen}1}, {\color{Myred}0}, {\color{Myred}0}, {\color{Myred}0}, {\color{Mygreen}0}, {\color{Myblue}3})$ &  type-2 anticoordinator& 8\\ 
    $\x(15)=({\color{Myblue}4}, {\color{Mygreen}1}, {\color{Mygreen}0}, {\color{Mygreen}1}, {\color{Myred}0}, {\color{Myred}0}, {\color{Myred}0}, {\color{Mygreen}0}, {\color{Myblue}3})$ &  type-4 anticoordinator & 9\\ 
    $\x(16)=({\color{Myblue}4}, {\color{Mygreen}1}, {\color{Mygreen}0}, {\color{Mygreen}0}, {\color{Myred}0}, {\color{Myred}0}, {\color{Myred}0}, {\color{Mygreen}0}, {\color{Myblue}3})$ &  type-3 anticoordinator& 8\\ 
    \cellcolor{MyLightYellow}$\x(17)=({\color{Myblue}4}, {\color{Mygreen}1}, {\color{Mygreen}1}, {\color{Mygreen}0}, {\color{Myred}0}, {\color{Myred}0}, {\color{Myred}0}, {\color{Mygreen}0}, {\color{Myblue}3})$ & & 9\\ 
    \hline
	\end{tabular} \label{tab:state1}
   \end{center}
   \end{table}
   The evolution of $\x$ under a particular activation sequence is shown in Table~\ref{tab:state1}. 
     When all coordinators, except for those who have fixed their strategies to $\A$, are playing $\B$, some low-temper anticoordinators start switching to $\A$.
     This causes some coordinators to switch to $\A$, which in turn makes the aforementioned anticoordinators switch back to $\B$.
     The solution trajectory revisits the states in the positively invariant set, e.g., $\x(1)=\x(5)=\x(17)$.
\end{example}
	
Following the example, we expect the existence of a \emph{benchmark type} $\p\in\{0,1,\ldots,b\}$ of anticoordinators and a benchmark type $\pp\in\{0,1,\ldots,b'\}$ of coordinators such that all type $\tb{1},\tb{2}, \tb{\ldots},\p$ anticoordinators and all type $\tb{1},\tb{2},\tb{\ldots}, \pp$ coordinators eventually fix their strategies to $\A$, and the agents of no other type all fix their strategies to $\A$.
By $\tb{\p = 0}$ (resp. $\tb{\pp = 0}$), we mean that the agents of no anticoordinating (resp. coordinating) type all fix their strategies to $\A$. 
We also expect the existence of two other benchmark types $\q\in\{\p+1,\ldots,b+1\}$ of anticoordinating and $\qq\in\{\pp +1,\ldots,b'+1\}$ of coordinating such that all type $\q,\tr{\q+1},\tr{\ldots},\tr{b}$ anticoordinators and all type $\qq, \tr{\qq+1},\tr{\ldots}, \tr{b'}$ coordinators eventually fix their strategies to $\B$, and the agents of no other type all fix their strategies to $\B$.
By $\tr{\q = b+1}$ (resp. $\tr{\qq = b'+1}$), we mean that the agents of no anticoordinating (resp. coordinating) type all fix their strategies to $\B$. 
We refer to those agents fixing their strategies to $\A$ as \tb{\emph{$\A$-fixed}}, those fixing their strategies to $\B$ as \tr{\emph{$\B$-fixed}}, and the remaining as \tg{\emph{wandering}} agents.
The above long-term behavior is what we anticipate when the solution trajectory enters a positively invariant set.
Namely, for every positively invariant set $\O\subseteq \X$, we expect the existence of benchmarks $(\tb{p},\tr{q},\tr{\qq },\tb{\pp })\in\Omega$, where
\begin{align*}
    \Omega
    \db \Big\{(&\tb{p},\tr{q},\tr{\qq },\tb{\pp })\,\Big|\,  \\
    &\tb{p}\in\{0,1,\ldots,b\}, &&\tr{q}\in\{\tb{p}+1,\ldots,b+1\},\\
    &\tb{\pp }\in\{0,1,\ldots,b'\}, &&\tr{\qq }\in\{\pp +1,\ldots,b'+1\} \Big\},
\end{align*}
such that $\O$ is a subset of the set
\begin{align*}
	&\X_{\tb{p},\tr{q},\tr{\qq },\tb{\pp }}
    \db \Big\{ 
    	\x \in \X\, \big|\, \nonumber \\
        	&\ \ \tb{x_i = n_i} \forall i\in\{\tb{1},\tb{\ldots},\tb{p}\}, 
        	\ \ \tb{x'_i = n'_i} \forall i\in\{\tb{1},\tb{\ldots},\tb{\pp }\}, \nonumber\\
            &\ \ \tr{x_i = 0} \forall i\in\{\tr{q},\tr{\ldots},\tr{b}\}, 
            \ \ \ \ \tr{x'_i = 0} \forall i\in\{\tr{\qq },\tr{\ldots},\tr{b'}\}
            \Big\}. \label{eq:I_1}%\nonumber
\end{align*}
% In particular, even the set $\X_{\tb{\xi},\tr{\zeta},\tr{\zeta'},\tb{\xi'}}$ is not necessarily positively invariant.
% The reason is that the wandering agents in $\X_{\tb{\xi},\tr{\zeta},\tr{\zeta'},\tb{\xi'}}$ may all play $\A$ (resp. $\B$), which may result in the total number of $\A$-players exceeding (resp. falling short of) the temper of the $\A$-fixed (resp. $\B$-fixed) agents, causing them to switch strategies.

We also expect certain patterns in the wandering agents' strategies.
In particular, starting from the state where all fixed agents have already fixed their strategies and all other agents are playing $\B$, i.e., 
\begin{equation*}
    (\underbrace{\tb{n_1},\tb{\ldots},\tb{n_{\p}},\tg{0},\ldots,\tr{0}}_{\text{anticoordinating}}, 
    \underbrace{\tr{0},\ldots,\tg{0}, \tb{n'_{\pp }}, \tb{\ldots}, \tb{n'_1}}_{\text{coordinating}}),
\end{equation*}
as we update the wandering anticoordinators in the ascending order of their tempers, i.e., from type $\tg{q-1}$ to $\tg{p+1}$, then the total number of $\A$-players never exceeds the temper of the active wandering anticoordinator. 
The intuition behind is that an anticoordinator choosing $\A$ does not make another anticoordinator with a higher temper to switch to $\B$.
We expect this to be also true for every state $\x$ in the invariant set, i.e., for every $\tg{i}\in\{\tg{p+1},\tg{\ldots},\tg{q-1}\}$,
\begin{equation*}
    \sum_{k=1}^{\pp} n'_k +\! \sum_{k=1}^{\p} n_k +\! \sum_{k=i}^{\q-1} x_k \leq \floor{\tg{\tau_{\tg{i}}}}+1,\  \tag{$\mathfrak{L}$}\label{L}
\end{equation*}
% \begin{align}
%     &\sum_{k=\tb{1}}^{\tb{\xi'}} \tb{n'_k} +\! \sum_{k=\tb{1}}^{\tb{\xi}} \tb{n_k} +\! \sum_{k=\tg{i}}^{\tg{\zeta-1}} \tg{x_k} 
%     \leq \floor{\tg{\tau_i}}+1\, \nonumber\\
%     &\qquad\qquad\qquad\qquad\qquad\qquad\forall \tg{i}\in\{\tg{\xi+1},\tg{\ldots},\tg{\zeta-1}\}. \label{condition1}
% \end{align}
Namely, for every type-$i$ wandering anticoordinator, 
the total number of $\A$-playing wandering anticoordinators with a non-less temper, together with the $\A$-fixed agents, does not exceed the type-$i$ temper plus one (see Lemma \ref{lem_updateRule}).
%the temper of any type-$i$ wandering anticoordinator is non-less than the total number of $\A$-playing wandering anticoordinators with a non-less temper together with the total number of $\A$-fixed agents.

A similar condition should hold if we start from the state where all fixed agents have already fixed their strategies and all wandering agents are playing $\A$, i.e., 
\begin{equation*}
    (\underbrace{\tb{n_1},\ldots,\tg{n_{q-1}},\tr{0},\tr{\ldots},\tr{0}}_{\text{anticoordinating}}, 
    \underbrace{\tr{0},\tr{\ldots},\tr{0}, \tg{n'_{\qq -1}}, \ldots, \tb{n'_1}}_{\text{coordinating}}).
\end{equation*}
Then by updating the wandering anticoordinators in the descending order of their tempers, i.e., from $\tg{p+1}$ to $\tg{q-1}$, the total number of $\A$-players never falls short of the temper of the active wandering anticoordinator. 
The intuition behind is that an anticoordinator choosing $\B$ does not make another anticoordinator with a lower temper to switch to $\A$.
We expect this to be also true for every state $\x$ in the invariant set, i.e., for every $\tg{i}\in\{\tg{p+1},\tg{\ldots},\tg{q-1}\}$,
% \begin{align}
%     &\sum_{k=\tb{1}}^{\tg{\zeta'-1}} n'_k+\sum_{k=\tb{1}}^{\tb{\xi}} \tb{n_k}  +\!\!\!\! \sum_{k=\tg{\xi+1}}^{\tg{i}}\!\!\!\! \tg{x_k} 
%     + \!\!\sum_{k=\tg{i+1}}^{\tg{\zeta-1}}\!\! \tg{n_k} 
%     \geq \floor{\tg{\tau_i}}+1\nonumber \\
%      &\qquad\qquad\qquad\qquad\qquad\   \forall \tg{i}\in\{\tg{\xi+1},\tg{\ldots},\tg{\zeta-1}\}. \label{condition2}
% \end{align}
\begin{equation*}
    \sum_{k=1}^{\qq-1} n'_k +\! \sum_{k=1}^{\p} n_k +\!\!\!\! \sum_{k=\p+1}^{i}\!\!\! x_k + \!\!\!\sum_{k=\tg{i}+1}^{\q-1}\!\! n_k
            \geq \floor{\tg{\tau_i}}+1
             \tag{$\mathfrak{R}$}.\label{R}
\end{equation*}

Given all this, for indices $(\tb{p},\tr{q},\tr{\qq },\tb{\pp })\in\Omega$, define 
\begin{equation*}\scalebox{.98}{\text{$
	\I_{\tb{p},\tr{q},\tr{\qq },\tb{\pp }}
    \db
    \Big\{ \x  \in \X_{\tb{p},\tr{q},\tr{\qq },\tb{\pp }}\, \big|\, 
      \eqref{L}, \eqref{R}\, \forall \tg{i}\in\{\tg{p+1},\tg{\ldots},\tg{q-1}\}\Big\}$}}.
\end{equation*}
% where \eqref{L} and \eqref{R} are the following inequalities:
% \begin{align}
%             & \ \ \sum_{k=1}^{\pp } n'_k +\! \sum_{k=1}^{p} n_k +\! \sum_{k=i}^{q-1} x_k \leq \floor{\tau_i}+1,\  \tag{$\mathfrak{L}$}\label{L}\\ %\nonumber\\
%             & \ \ \sum_{k=1}^{\qq -1} n'_k +\! \sum_{k=1}^p n_k +\!\!\!\! \sum_{k=p+1}^{i}\!\!\! x_k + \!\!\!\sum_{k=i+1}^{q-1}\!\! n_k
%             \geq \floor{\tau_i}+1
%              \tag{$\mathfrak{R}$}.\label{R} %\nonumber 
%             %& \Big\}. \label{eq:I_1}
% \end{align}
For every positively invariant set $\O$, we expect the existence of $(\tb{p},\tr{q},\tr{\qq },\tb{\pp })\in\Omega$ such that $\O\subseteq 	\I_{\tb{p},\tr{q},\tr{\qq },\tb{\pp }}$.
%However, not every set $\I_{\tb{p},\tr{q},\tr{\qq },\tb{\pp }}$ is positively invariant. 
% Let $\O_{\tb{\xi},\tr{\zeta},\tr{\zeta'}\tb{\xi'}}$ denote an invariant set with the maximum $\A$-fixed anticoordinating and coordinating types $\tb{\xi}$ and $\tb{\xi'}$ and minimum $\B$-fixed anticoordinating and coordinating types $\tr{\zeta}$ and $\tr{\zeta'}$.
% We expect the set $\I_{\tb{\xi},\tr{\zeta},\tr{\zeta'},\tb{\xi'}}$ either itself or a subset of it to be positively invariant.
%
The quadruple $(\p,\q,\qq,\pp)$ is not necessarily unique under the population dynamics, and hence, neither is the invariant set. 
Starting from different or even the same initial condition, we may end up at different positively invariant sets, based on the activation sequence. 
Apparently, the population in Example 1 admits another positively invariant set as well as an equilibrium point.
\setcounter{example}{0}
\begin{example}[\textbf{continued}]
    When the state starts from the initial condition $\x(0)=({\color{Mygreen}4}, {\color{Myred}0}, {\color{Myred}0}, {\color{Myred}0}, {\color{Myred}0}, {\color{Mygreen}0}, {\color{Myblue}10}, {\color{Myblue}2}, {\color{Myblue}3})$,
    it wanders in a positively invariant set, different from the previous one (Figure \ref{fig:InvariantSet2}).  
    The set consists of the first 4 states in Table \ref{tab:state2}.
    Interestingly, this time, the state transition in the table is the only possible transition for $\x(t)$. 
    Namely, starting from any initial condition in this set and under any activation sequence, the state follows the same cycle of length 4 to return to that initial condition.
    This behavior is similar to that of a limit cycle with the difference that here, the returning time is not necessarily fixed.
    The reason is that if any agent other than those mentioned in the table become active, they will not switch, and hence, the state remains unchanged. 
\begin{figure}[ht]
	\centering	\includegraphics[width=1\linewidth]{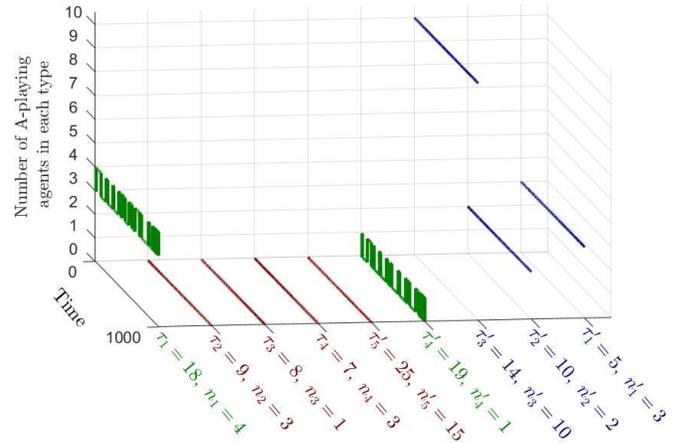}
    \caption{Evolution of agents' decisions in the second positively invariant set with $\tb{\p=0}$, $\tr{\q=2}$, $\tr{\qq=5}$, and $\tb{\pp=3}$.
    The $\A$-fixed, $\B$-fixed, and wandering types are indicated by blue, red, and green colors respectively.}
	\label{fig:InvariantSet2}
\end{figure}
\begin{table}[ht]
  \begin{center}
  \caption{State transition in the second invariant set}
    \begin{tabular}{|c|c|c|}
		\hline 
		State of the system & Active agent & $\begin{matrix} \text{Number of} \\ \text{$\A$-players}\end{matrix}$ \\ 
		\hline 
		\cellcolor{MyLightYellow}$\x(0)=({\color{Mygreen}4}, {\color{Myred}0}, {\color{Myred}0}, {\color{Myred}0}, {\color{Myred}0}, {\color{Mygreen}0}, {\color{Myblue}10}, {\color{Myblue}2}, {\color{Myblue}3})$ &   type-$4$ coordinator & 19 \\ 
        $\x(1)=({\color{Mygreen}4}, {\color{Myred}0}, {\color{Myred}0}, {\color{Myred}0}, {\color{Myred}0}, {\color{Mygreen}1}, {\color{Myblue}10}, {\color{Myblue}2}, {\color{Myblue}3})$ &  type-$1$ anticoordinator & 20 \\ 
        $\x(2)=({\color{Mygreen}3}, {\color{Myred}0}, {\color{Myred}0}, {\color{Myred}0}, {\color{Myred}0}, {\color{Mygreen}1}, {\color{Myblue}10}, {\color{Myblue}2}, {\color{Myblue}3})$ &  type-$4$ coordinator & 19\\
        $\x(3)=({\color{Mygreen}3}, {\color{Myred}0}, {\color{Myred}0}, {\color{Myred}0}, {\color{Myred}0}, {\color{Mygreen}0}, {\color{Myblue}10}, {\color{Myblue}2}, {\color{Myblue}3})$ &  type-$1$ anticoordinator & 18\\ 
        \cellcolor{MyLightYellow}$\x(4)=({\color{Mygreen}4}, {\color{Myred}0}, {\color{Myred}0}, {\color{Myred}0}, {\color{Myred}0}, {\color{Mygreen}0}, {\color{Myblue}10}, {\color{Myblue}2}, {\color{Myblue}3})$ & & 19 \\
		\hline 
	\end{tabular} \label{tab:state2}
   \end{center}
\end{table}

Furthermore, if all coordinators play $\A$, they are enough to keep themselves doing so and stimulate all anticoordinators to play $\B$. 
Thus, the population also possesses the equilibrium point $\x^*=(\tr{0},\tr{0},\tr{0},\tr{0},\tb{15},\tb{1},\tb{10},\tb{2},\tb{3})$. 
\end{example}

Identifying the benchmarks $\p$, $\q$, $\qq$, and $\pp$ is not straightforward. 
The number of $\A$-fixed anticoordinators depends on the number of $\A$-fixed coordinators and the maximum number of $\A$-playing wandering agents.
The number of $\A$-fixed coordinators depends on the number of $\A$-fixed anticoordinators and the minimum number of $\A$-playing wandering agents.
The number of wandering coordinators depends on the number of $\A$-fixed agents and the maximum and minimum number of $\A$-playing wandering anticoordinators, 
which in turn depend on the number of $\A$-fixed agents and wandering coordinators.
This results in several loops, complexifying the identification of the benchmarks as well as proving the invariance of the resulting set.
In what follows, we show how to break the loops to find the benchmarks.
This allows us to characterize a collection of positively invariant sets under the population dynamics. 
We later show in Section \ref{sec_revisiting} that this collection includes the equilibrium in Example \ref{example1}.
The collection also includes two positively invariant sets $\I_{\tb{1}, \tr{5}, \tr{3}, \tb{1}}$ and $\I_{\tb{0}, \tr{2}, \tr{5}, \tb{3}}$, that match precisely the first and second invariant sets in the example.
%, denoted by $\O_{\tb{1}, \tr{5}, \tr{3}, \tb{1}}$ and $\O_{\tb{0}, \tr{2}, \tr{5}, \tb{3}}$
%Let $\I\subseteq\X$ denote the set of all positively invariant sets. 

%In what follows, we show the existence of $(p,q,\qq ,\pp )\in\Omega$ such that $x_{p,q,\qq ,\pp }$ is positively invariant. 

%Finding this set, in general, is nontrivial; instead, we establish the existence of a superset in the above format that is positively invariant. 
%Of course, a trivial superset would be $\X$ itself since it can be written as $\I_{0,0,b+1,b'+1}$; however, we aim at finding the smallest invariant set. 

%%%%%%%%%%%%%%%%%%%%%%%%%%%%%%%%%%%%%%%%%%%%%%%%%%%%%%%%%%%%%%%%%%%%%%%%%%%
\section{Positively invariant sets} \label{sec_positivelyInvariantSets}
We provide a sufficient condition for the set $\I_{\p,\q,\qq,\pp}$ to be positively invariant.
More specifically, we introduce quadruples of benchmarks $(\r, \s, \ss, \rr)\in\Omega$ such that $\I_{\r, \s, \ss, \rr}$ is positively invariant. 
The benchmarks depend on the maximum and minimum number of $\A$-players that in the invariant set. 
In Subsection \ref{sec_activationSequences}, we introduce activation sequences, under which the population can reach these extremum numbers of $\A$-players.
Then in Subsection \ref{identifyingPositivelyInvarianSets}, we introduce a collection of pairs $(\r, \d)$, each results in a quadruple $(\r, \s, \ss, \rr)$.
Finally, in Subsection \ref{sec_provingInvariance}, we state the invariance of $\I_{\r, \s, \ss, \rr}$ in Theorem \ref{th1} and proceed to the proof.

Motivated by the structure of the invariant set in Example 1, given a state $\x\in\X$, we define the types $x^{\n},x^{\o} ,x^{\oo},x^{\nn}$ by
\begin{align*}
    &(x^{\n},x^{\o} ,x^{\oo},x^{\nn})\overset{\Delta}{=} \\
    &\underset{(\p,\q,\qq,\pp)\in\Omega}{\arg}\min(-\p+\q+\qq-\pp) \quad \text{ s.t. }\quad \X_{\p,\q,\pp,\qq} \supseteq \x.
\end{align*}
Namely, $\X_{x^{\n},x^{\o},x^{\oo},x^{\nn}}$ is the smallest set in the form of $\X_{\p,\q,\qq,\pp}$ that contains $\x$. 
The special cases $\tb{p=0}$, $\tb{p' =0}$, $\tr{q = b+1}$, and $\tr{q'  = b'+1}$ are treated as in the previous section.
In the absence of the special cases, $\x$ takes the general form
\begin{align*}
    \x = 
    \ &(\underbrace{\tb{n_1},\tb{\ldots},\tb{n_r}, x_{r+1},\ldots, x_{s-1}, \tr{0},\tr{\ldots}, \tr{0}}_{\text{anticoordinating}}, \\
    &\qquad\qquad\underbrace{\tr{0}, \tr{\ldots},\tr{0}, x'_{s'-1},\ldots, x'_{r'+1}, \tb{n'_{r'}},\tb{\ldots},\tb{n'_1}}_{\text{coordinating}}),
\end{align*}
where $x_{r+1}<n_{r+1}$, $x'_{r'+1}<n'_{r'+1}$, $x_{s-1}>0$,  $x'_{s'-1}>0$, and 
 $\tb{r} = x^{\n}, \tr{s} = x^{\o}, \tr{s'} = x^{\oo}, \tb{r'} = x^{\nn}$.

%%%%%%%%%
\subsection{Left-to-right and right-to-left activation sequences}   \label{sec_activationSequences}
The key property of coordinators is their conjoint switching to the same strategy: if a coordinator tends to switch to $\A$ (resp. $\B$), so do all other coordinators with a lower (resp. higher) temper.
Consequently, starting from any initial condition, if all coordinators become active in the ascending order of their tempers, we will reach a state with a benchmark temper such that all coordinators with lower tempers play $\A$ and all coordinators with higher tempers play $\B$.  
Formally, let $\overleftarrow{\c}: \X \to \X$ be the function that maps the state $\y\in\X$ to the resulting state after we consecutively update first all type-$1$, next all type-$2$, $\ldots$, and finally all type-$b'$ coordinators.
That is, $\overleftarrow{\c}(\y) = \x(t=\sum_{k = 1}^{b'}n'_k)$ when we start from $\x(0) = \y$ and follow the above activation sequence, which we refer to as the \emph{coordinating right-to-left activation (sequence)}.
The state $\overleftarrow{\c}(\y)$ can be easily shown to take the structure
\begin{equation*}
    \overleftarrow{\c}(\y) = 
    (\!\!\underbrace{*,\ldots,*}_{\text{anticoordinating}}\!\!, 
    \underbrace{0,\ldots,0, n'_i,n'_{i-1} \ldots, n'_1}_{\text{coordinating}}),
\end{equation*}
where $i = \overleftarrow{c}^{\nn}(\y) = \overleftarrow{c}^{\oo}(\y)-1$.
A similar behavior is seen when the coordinators become active in the descending order of their tempers. 
Correspondingly, we define the function $\overrightarrow{\c}$ similar to $\overleftarrow{\c}$ but when first all type-$b'$, next all type-$(b'-1)$, $\ldots$, and finally all type-$1$ coordinators become active, which we refer to as the  \emph{coordinating left-to-right activation}. 

The anticoordinators, however, do not exhibit such a conjoint switching to the same strategy. 
Let $\overleftarrow{\a}: \X \to \X$ be the function that maps a state $\y\in\X$
to the resulting state after we consecutively update first all type-$b$, next all type-$(b-1)$, \ldots, and finally all type-$1$ anticoordinators.
Namely, $\overleftarrow{\a}(\y) = \x(t=\sum_{k = 1}^{b}n_k)$ when we start from $\x(0) = \y$ and follow this activation sequence, which we refer to as the \emph{anticoordinating right-to-left activation}.
Clearly, $\overleftarrow{a}_i(\y)$ then denotes the number of $\A$-playing type-$i$ anticoordinators at the final state.
For $i\in\{1,\ldots, b\}$, let $\overleftarrow{A}^{i}: \X\to\mathbb{Z}_{\geq0}$ with $\overleftarrow{A}^{i}(\y) = A(\x(\sum_{k = i}^{b}n_k))$ given $\x(0) = \y$ and under the above activation sequence; that is, the total number of $\A$-players after we consecutively update all type-$b$, type-$(b-1)$, \ldots, type-$i$, and none of the remaining anticoordinators in $\y$. 
%Also define $\overleftarrow{n}^{A,b+1}(\y)$ to be the total number of agents before the above activation takes place, i.e., $A(0)$. 
The following result is straightforward.
\begin{lemma}   \label{lem01}
    Let $\x\in\X$.
    Under the anticoordinating right-to-left activation, 
    for each $i \in \{1,\ldots, \overleftarrow{a}^{\o}(\x)-1\}$, one or both of the followings hold:
    \begin{equation*}
        \overleftarrow{a}_i(\x) = n_i \qquad\text{or}\qquad
        \overleftarrow{A}^{i}(\x) = \floor{\tau_i}+1. 
    \end{equation*} 
\end{lemma}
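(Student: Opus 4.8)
The plan is to track the running total number of $\A$-players as the right-to-left activation processes the anticoordinating types one block at a time, and then to chain the per-block behaviour using the descending ordering of the tempers. Write $T_i\db\floor{\tau_i}$. Recall from Lemma~\ref{lem_updateRule} that, in terms of the current total of $\A$-players, an activated type-$i$ anticoordinator who currently plays $\B$ switches to $\A$ exactly when that total is at most $T_i$ (incrementing it by one), while one who currently plays $\A$ switches to $\B$ exactly when the total is at least $T_i+2$ (decrementing it by one); otherwise she does not switch. The first step is to record the resulting per-block trapping property: during the consecutive updates of the $n_i$ type-$i$ agents, the total can rise only from values $\le T_i$ (and then to at most $T_i+1$) and can fall only from values $\ge T_i+2$ (and then to at least $T_i+1$). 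Hence the level $T_i+1$ is never crossed within a type-$i$ block: once the total is $\le T_i+1$ it stays $\le T_i+1$ for the remainder of that block, and symmetrically for $\ge T_i+1$.

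From this I would read off a trichotomy for a single block. If the total stays $\le T_i$ throughout, every activated agent ends and remains at $\A$, so $\overleftarrow{a}_i(\x)=n_i$; if it stays $\ge T_i+2$, every agent ends at $\B$, so $\overleftarrow{a}_i(\x)=0$; otherwise the total meets $T_i+1$ and is trapped there, giving $\overleftarrow{A}^i(\x)=T_i+1$. Two implications follow that serve as the two halves of the proof: (i) if $\overleftarrow{a}_i(\x)<n_i$ then the total reaches $T_i+1$ and never again drops below it, so $\overleftarrow{A}^i(\x)\ge T_i+1$; and (ii) if $\overleftarrow{a}_k(\x)>0$ then the total reaches $T_k+1$ and never again rises above it, so $\overleftarrow{A}^k(\x)\le T_k+1$.

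It remains to combine these with the hypothesis $i\le\overleftarrow{a}^{\o}(\x)-1$. Applying the benchmark definition to the final state $\overleftarrow{\a}(\x)$, the index $k\db\overleftarrow{a}^{\o}(\x)-1$ is the largest type with $\overleftarrow{a}_k(\x)>0$, and $k\ge i$. Fix the nontrivial case $\overleftarrow{a}_i(\x)<n_i$; by (i) it then suffices to prove $\overleftarrow{A}^i(\x)\le T_i+1$. If $k=i$ this is exactly (ii); otherwise $k>i$, and I would begin from $\overleftarrow{A}^k(\x)\le T_k+1\le T_i+1$ (using (ii) together with $T_k\le T_i$, which holds because $k>i$ and the tempers decrease with the index) and propagate this bound forward through the blocks of types $k-1,k-2,\ldots,i$. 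Since the right-to-left activation processes these in increasing temper and each intervening type $j$ has $T_j\le T_i$, the trapping property keeps the total at most $T_i+1$ across every such block, yielding $\overleftarrow{A}^i(\x)\le T_i+1$. Combined with (i), this gives $\overleftarrow{A}^i(\x)=T_i+1=\floor{\tau_i}+1$, as claimed. I expect this last propagation to be the main obstacle: the per-block facts are immediate, but the conclusion genuinely relies on the descending temper ordering and on locating $k$ from the benchmark so that every intervening block has threshold no larger than $T_i$; without that monotonicity the total could be driven above $T_i+1$ and the statement would fail.
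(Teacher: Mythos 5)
Your proof is correct. Note that the paper offers no proof of Lemma~\ref{lem01} at all --- it is introduced with ``the following result is straightforward'' --- so there is nothing to compare against except the intended-but-unwritten argument, and yours is surely it: the per-block trapping of the running total at the level $\floor{\tau_i}+1$ (increments only from $\le\floor{\tau_i}$, decrements only from $\ge\floor{\tau_i}+2$) is exactly what Lemma~\ref{lem_updateRule} gives, and your two one-sided bounds (i) and (ii) correctly reduce the claim to propagating $\overleftarrow{A}^{k}(\x)\le\floor{\tau_k}+1$ forward from the last type $k=\overleftarrow{a}^{\o}(\x)-1$ with $\overleftarrow{a}_k(\x)>0$ through the blocks $k-1,\ldots,i$. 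You also correctly identified the only step that genuinely uses the ordering \eqref{temperOrdering}: every intervening block $j\in\{i,\ldots,k-1\}$ has $\floor{\tau_j}\le\floor{\tau_i}$, so increments within those blocks land at most at $\floor{\tau_i}+1$ and the upper bound survives to the end of block $i$, where it meets the lower bound from (i).
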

We define the function $\overrightarrow{\a}$ similar to $\overleftarrow{\a}$, but when first all type-$1$, next all type-$2$, $\ldots$, and finally all type-$b$ anticoordinators become active, which we refer to as the \emph{anticoordinating left-to-right activation}. 
The function $\overrightarrow{A}^{i}, i\in\{1,\ldots, b\},$ is defined correspondingly, and the following result is straightforward. 
\begin{lemma}   \label{lem02}
    Let $\x\in\X$.
    Under the anticoordinating left-to-right activation, 
    for each $i \in \{\overrightarrow{a}^{\n}(\y)+1,\ldots, b\}$, one or both of the followings hold:
    \begin{equation*}
        \overrightarrow{a}_i(\x) = 0 \qquad\text{or}\qquad
        \overrightarrow{A}^{i}(\x) = \floor{\tau_i}+1.
    \end{equation*}
\end{lemma}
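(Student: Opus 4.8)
The plan is to reduce the statement to a ``single type'' computation and then sweep over the types $i=\overrightarrow{a}^{\n}(\x)+1,\ldots,b$ by induction, mirroring the argument behind Lemma~\ref{lem01}. First I would isolate one type: during the anticoordinating left-to-right activation, while the $n_i$ type-$i$ anticoordinators are updated consecutively, the number of $\A$-players \emph{outside} type-$i$ is frozen (types $1,\ldots,i-1$ are already set, types $i+1,\ldots,b$ and all coordinators are untouched). Call this frozen background $C_{-i}$, so the running total is $A_{\mathrm{tot}}=C_{-i}+a$, where $a$ is the current number of $\A$-playing type-$i$ agents. Rewriting rule \eqref{eq:anti} through Lemma~\ref{lem_updateRule} and using that all type-$i$ agents share temper $\tau_i$, an active $\B$-player switches to $\A$ iff $A_{\mathrm{tot}}\leq\floor{\tau_i}$, an active $\A$-player switches to $\B$ iff $A_{\mathrm{tot}}\geq\floor{\tau_i}+2$, and nobody of type-$i$ switches when $A_{\mathrm{tot}}=\floor{\tau_i}+1$. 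Thus below the level $\floor{\tau_i}+1$ only $\B\!\to\!\A$ switches can fire and above it only $\A\!\to\!\B$ switches can fire, so $A_{\mathrm{tot}}$ moves monotonically toward $\floor{\tau_i}+1$ no matter in which order the type-$i$ agents are activated. This yields the single-type \emph{trichotomy}: after type-$i$ has been updated, one of $\overrightarrow{a}_i(\x)=n_i$, $\overrightarrow{a}_i(\x)=0$, or $\overrightarrow{A}^{i}(\x)=\floor{\tau_i}+1$ holds, with the boundary value $C_{-i}=\floor{\tau_i}+1$ making two of them hold at once (this accounts for the ``one or both''). In particular, whenever type-$i$ does not end all-$\A$, I get the bound $\overrightarrow{A}^{i}(\x)\geq\floor{\tau_i}+1$.

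It then remains only to rule out the strict all-$\A$ outcome for every $i>\overrightarrow{a}^{\n}(\x)$, since on the other two outcomes the claim is precisely $\overrightarrow{a}_i(\x)=0$ or $\overrightarrow{A}^{i}(\x)=\floor{\tau_i}+1$. I would induct on $i$ from $\overrightarrow{a}^{\n}(\x)+1$ to $b$. The base case is immediate: by definition of $\overrightarrow{a}^{\n}(\x)$ as the length of the maximal all-$\A$ prefix of $\overrightarrow{\a}(\x)$, type $\overrightarrow{a}^{\n}(\x)+1$ is not all-$\A$, so the trichotomy gives the claim. For the step, suppose the claim failed for type-$(i+1)$; then $\overrightarrow{a}_{i+1}(\x)\neq 0$ and $\overrightarrow{A}^{i+1}(\x)\neq\floor{\tau_{i+1}}+1$, so the trichotomy forces $\overrightarrow{a}_{i+1}(\x)=n_{i+1}$ with $\overrightarrow{A}^{i+1}(\x)=C_{-(i+1)}+n_{i+1}\leq\floor{\tau_{i+1}}$. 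But the induction hypothesis and the bound above give $\overrightarrow{A}^{i}(\x)\geq\floor{\tau_i}+1\geq\floor{\tau_{i+1}}+1$, where the last inequality is the temper ordering \eqref{temperOrdering}. Since $C_{-(i+1)}=\overrightarrow{A}^{i}(\x)-x_{i+1}\geq\overrightarrow{A}^{i}(\x)-n_{i+1}\geq\floor{\tau_{i+1}}+1-n_{i+1}$, this contradicts $C_{-(i+1)}\leq\floor{\tau_{i+1}}-n_{i+1}$. Hence type-$(i+1)$ is not all-$\A$ and the trichotomy closes the induction.

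I expect the main obstacle to be the single-type analysis, not the induction: one must justify the monotone convergence of $A_{\mathrm{tot}}$ to $\floor{\tau_i}+1$ for an \emph{arbitrary} activation order within a type, and handle the three starting regimes ($A_{\mathrm{tot}}$ below, at, or above the level) together with the degenerate situation in which the level cannot be reached because too few type-$i$ agents remain (the genuine all-$\A$ outcome). Once the trichotomy and the accompanying bound $\overrightarrow{A}^{i}(\x)\geq\floor{\tau_i}+1$ are established, the remainder is short, and the whole argument is the mirror image of Lemma~\ref{lem01} under the exchange of $\A$ and $\B$, of ascending and descending temper order, and of the benchmarks $\overleftarrow{a}^{\o}$ and $\overrightarrow{a}^{\n}$.
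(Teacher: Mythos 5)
Your proposal is correct: the single-type trichotomy (with $A_{\mathrm{tot}}$ moving monotonically toward the plateau $\floor{\tau_i}+1$, and the accompanying bound $\overrightarrow{A}^{i}(\x)\geq\floor{\tau_i}+1$ whenever type $i$ does not end all-$\A$), combined with the induction that uses $\floor{\tau_i}\geq\floor{\tau_{i+1}}$ from the temper ordering \eqref{temperOrdering} to rule out the all-$\A$ outcome beyond the prefix, is a sound and complete argument. The paper states this lemma without proof, declaring it straightforward, and your write-up supplies precisely the details it omits, in the mirror-image form of Lemma \ref{lem01} that the paper evidently has in mind.
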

It proves useful to also define the function $\overrightarrow{\a}^{i}: \X\to\X, i\in\{1,\ldots,b\},$ similar to $\overrightarrow{\a}$ but when we update only type-$i$, type-$(i+1)$, \ldots, type-$b$ anticoordinators in the ascending order, which we refer to as the \emph{anticoordinating left-to-right activation starting from type $i$}. 

%%%%%%%%%%%%%%%%%%%%%%%%%%%%%%%%%%%%%%%%%%%%%%%%%%%%%%%%%%%%%%%%%%%%%%%%%%%
\subsection{Identifying positively invariant sets}  \label{identifyingPositivelyInvarianSets}
% We find the indices $(\r,\s,\ss,\rr) \in \Omega$ that may be different yet reasonably close to $(\tb{\xi},\tr{\zeta},\tr{\zeta'},\tb{\xi'})$ and define a positively invariant set $\I_{\r,\s,\ss,\rr}$ that approximates $\O_{\tb{\xi},\tr{\zeta},\tr{\zeta'},\tb{\xi'}}$.
% First, we approximate the pair $(\tb{\xi},\tb{\xi'})$ by $(\r,\delta)$. Next, we approximate $\tr{\zeta}$ and $\tr{\zeta'}$ by $\s$ and $\ss$, both as functions of $\r$ and $\d$. 
% Finally, we tighten up our approximation on $\tb{\xi'}$ from $\d$ to $\rr$. 
% Recall that $\tb{\xi},\tr{\zeta},\tr{\zeta'},\tb{\xi'}$ are not necessarily unique, so neither are $(\r,\s,\ss,\rr)$.
% Hence, in what follows, we define a set $\Psi$ of indices $(\r,\d)$, based on which the rest of the indices, i.e., $\s,\rr,\ss$, and hence, the set $\I_{\r,\s,\ss,\rr}$, are defined uniquely. 
% We provide the intuition behind the definitions, but note that the indices are rigorously defined and do not depend on what we claim to be intuitively true. % or later shown.
%
We define the indices $\r$, $\s$, $\ss$, and $\rr$ and provide the intuition behind the definitions, but note that the indices are rigorously defined and do not depend on what we claim to be intuitively true. % or later shown.
As mentioned in Section \ref{sec:example}, the indices $\r$, $\s$, $\ss$, and $\rr$ are related recursively. 
This hinders any of the indices to be defined independently from the others.
The difficulty is due to the fact that index $\r$, for example, is the \textbf{greatest} $\A$-fixed anticoordinating type. 
If, instead, we search over every arbitrary (yet possibly close to the greatest) $\A$-fixed anticoordinating type $\r$, it no longer has to depend on the other benchmarks, or may only depend on a single benchmark. 
More specifically, given $\r\in\{0,\ldots,b\}$ and $\d\in\{0,\ldots,b'\}$, define the state
\begin{equation*}
    \y^{\r,\d} \db
    (\underbrace{n_1,\ldots,n_{\r},0,\ldots,0}_{\text{anticoordinating}}, 
    \underbrace{0,\ldots,0, n'_{\d}, \ldots, n'_{1}}_{\text{coordinating}}).
\end{equation*}
We approximate $\rr$ by $\delta$ and later tighten it up. 
Representing $\A$-fixed agents, all type-$1$, ..., type-$\r$ anticoordinators must tend to play $\A$ at the state $\y^{\r,\d}$, because the total number of $\A$-players in the invariant set is always non-less than the number of $\A$-players at $\y^{\r,\d}$.
Now, although restrictive, we also force all type-$1$, ..., type-$\d$ coordinators to tend to play $\A$ at $\y^{\r,\d}$, a cost we pay to obtain the approximation.
Therefore, given $\r\in\{0,\ldots,b\}$, we define the set of ``acceptable'' $\delta$'s by 
\begin{equation*}
    \Delta(\r) \db
    \Big\{\d\in\{0,\ldots,b'\}\,\big|\, 
    \tau'_{\d}+1 \leq
        \sum_{i=1}^{\r} n_i + \sum_{i=1}^{\d} n'_i 
            \leq \tau_{\r}+1 \Big\},
\end{equation*}
where we define $\tau'_0=-2$ and $\tau_{0}=n$.
The $\A$-fixed anticoordinators should be resistant to not only the minimum number of $\A$-players in the invariant set, but also the maximum. 
The maximum number of $\A$-players that the solution trajectory can reach from $\y^{\r,\d}$ is obtained by applying
an anticoordinating right-to-left activation to reach $\overleftarrow{\a}(\y^{\r,\d})$, followed by a coordinating right-to-left activation to reach $ \overleftarrow{\c}(\overleftarrow{\a}(\y^{\r,\d}))$.
We then expect all type-$1$, ..., type-$\r$ anticoordinators tend to play $\A$, implying $A(\overleftarrow{\c}(\overleftarrow{\a}(\y^{\r,\d}))) \leq {\tau_{\r}}+1$ in view of Lemma \ref{lem_updateRule}.
Moreover, we want $\r$ to be the maximum type that satisfies the inequality, implying $\tau_{\r+1} < A(\overleftarrow{\c}(\overleftarrow{\a}(\y^{\r,\d})))$.
Thus, we end up at the following ``acceptable'' pairs of $(\r,\d)$:
\begin{align}
    \Psi \db 
    \Big\{(\r,\d) \,|\, &\r\in\{0,\ldots,b\}, \d\in\Delta(\r), \nonumber\\
    & {\tau_{\r+1}} < A(\overleftarrow{\c}(\overleftarrow{\a}(\y^{\r,\d}))) \leq {\tau_{\r}} + 1\Big\}. \label{th1_def1}
\end{align}

Consider the case when $\Psi\neq \emptyset$, and let $(\r,\d)\in \Psi$.
Intuitively, the $\B$-fixed anticoordinators, are those who did not switch their strategies to $\B$ at the state $\overleftarrow{\a}(\y^{\r,\d})$ under the above procedure.
We, therefore, define the benchmark $\s$ by
\begin{equation}
    \s \db 
    \overleftarrow{a}^{\o}(\y^{\r,\d}). \label{th1_def2}
\end{equation}
We drop the argument of the functions when it is clear from the context, e.g., we use $\s$ instead of $\s(\r,\d)$.
We later show that the number of $\A$-playing anticoordinators in the invariant set does not exceed that in $\hat{\y}^{\r,\d} \db \overleftarrow{\a}(\y^{\r,\d})$, which is in the form of
\begin{equation*}\scalebox{.96}{\text{$
    (\underbrace{\overbrace{n_1,\ldots,n_{\r},*,\ldots,*}^{\s}, 0,\ldots,0}_{\text{anticoordinating}}, 
    \underbrace{0,\ldots,0, n'_{\d}, \ldots, n'_{1}}_{\text{coordinating}})$}}.
\end{equation*}
Consequently, the maximum number of $\A$-playing coordinators appears in $\tilde{\y}^{\r,\d} \db \overleftarrow{\c}(\overleftarrow{\a}(\y^{\r,\d}))$. 
Hence, we define
\begin{equation}
    \ss \db 
    \overleftarrow{c}^{\oo}(\overleftarrow{\a}(\y^{\r,\d})). \label{th1_def3}
\end{equation}
The state $\tilde{\y}^{\r,\d}$ will, therefore, take the form
\begin{equation*}\scalebox{.96}{\text{$
    (\underbrace{\overbrace{n_1,\ldots,n_{\r},*,\ldots,*}^{\s}, 0,\ldots,0}_{\text{anticoordinating}}, 
    \underbrace{0,\ldots,0, n'_{\ss-1}, \ldots, n'_{1}}_{\text{coordinating}})$}}.
\end{equation*}
To find the other coordinating benchmark type $\rr$, we consider the state where all wandering agents are playing $\A$:
\begin{equation*}
\z^{\r,\d} \db 
    (\underbrace{n_1,\ldots,n_{\s-1},0,\ldots,0}_{\text{anticoordinating}}, 
    \underbrace{0,\ldots,0, n'_{\ss-1}, \ldots, n'_{1}}_{\text{coordinating}}),
\end{equation*}
and perform an anticoordinating left-to-right activation starting from type $\r+1$ to
to reach $\hat{\z}^{\r,\d} \db \overrightarrow{\a}^{\r+1}(\z^{\r,\delta})$, which can be shown to take the structure
\begin{equation*}\scalebox{.95}{\text{$
    (\underbrace{\overbrace{n_1,\ldots,n_{\r},*,\ldots,*}^{\s},
    0,\ldots,0}_{\text{anticoordinating}}, 
    \underbrace{0,\ldots,0, n'_{\ss-1}, \ldots, n'_{1}}_{\text{coordinating}})$}}.
\end{equation*}
The $\A$-fixed coordinators will not switch to $\B$ even when the population reaches its minimum number of $\A$-players.
On the other hand, we later show that the number of $\A$-playing anticoordinators at the invariant set does not fall short of that at $\hat{\z}^{\r,\d}$.
Hence, the minimum number of $\A$-playing coordinators is obtained by performing a coordinating left-to-right activation to reach $\tilde{\z}^{\r,\d} \db \overrightarrow{\c}(\hat{\z}^{\r,\d})$, which takes the following form:
\begin{equation*}\scalebox{.96}{\text{$
    (\underbrace{\overbrace{n_1,\ldots,n_{\r},*,\ldots,*}^{\s},
    0,\ldots,0}_{\text{anticoordinating}}, 
    \underbrace{0,\ldots,0, n'_{\rr}, \ldots, n'_{1}}_{\text{coordinating}})
$}},
\end{equation*}
where the final benchmark $\rr$ is defined by
\begin{equation}
    \rr \db 
    \overrightarrow{c}^{\nn}(\overrightarrow{\a}^{\r+1}(\z^{\r,\d})).  \label{th1_def4}
\end{equation}

Since the indices $\s,\rr,\ss$ depend merely on $\r$ and $\d$, so does the set $\I_{\r, \s, \ss, \rr}$. 
Hence, to simplify the notations, we define
\begin{equation}
    \I_{\r,\d} \db \I_{\r, \s, \ss, \rr}. \label{th1_def5}
\end{equation}

%%%%%%%%%%%%%%%%%%%%%%%%%%%%%%
\subsection{Proving invariance} \label{sec_provingInvariance}
The following is the main result of the paper.
Recall that $\Psi$ is defined in \eqref{th1_def1}, based on which $\s$, $\ss$, and $\rr$ are defined in \eqref{th1_def2} to \eqref{th1_def4}. 
Correspondingly, the set $\I_{\r, \s, \ss, \rr}$, and hence, $\I_{\r,\d}$ is defined in \eqref{th1_def5}.
\begin{theorem} \label{th1}
    Given $(\r,\d)\in \Psi$,
    the set $\I_{\r,\d}$ is positively invariant.
\end{theorem}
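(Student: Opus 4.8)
The plan is to verify positive invariance one activation at a time: because agents update asynchronously and the activation sequence is arbitrary, it suffices to fix an arbitrary $\x\in\I_{\r,\d}$, let a single agent become active, and show the resulting state $\x^{+}$ again lies in $\I_{\r,\d}$. I would split into four cases according to the role the active agent plays in $\x$: an $\A$-fixed agent (anticoordinator of type $\le\r$ or coordinator of type $\le\rr$), a $\B$-fixed agent (anticoordinator of type $\ge\s$ or coordinator of type $\ge\ss$), a wandering anticoordinator (type in $\{\r+1,\ldots,\s-1\}$), or a wandering coordinator (type in $\{\rr+1,\ldots,\ss-1\}$). In every case I must check both that membership in $\X_{\r,\s,\ss,\rr}$ survives and that conditions \eqref{L} and \eqref{R} continue to hold for all wandering anticoordinating types.

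The first and central step is to establish the two extremal estimates $A(\tilde{\y}^{\r,\d})\ge A(\x)\ge A(\tilde{\z}^{\r,\d})$ for every $\x\in\I_{\r,\d}$. For the upper estimate I would bound the coordinator contribution of $\x$ by its maximum $\sum_{k=1}^{\ss-1}n'_k$, which is exactly the coordinator contribution of $\tilde{\y}^{\r,\d}$, and then argue that the anticoordinator contribution of any $\x$ satisfying \eqref{L} cannot exceed that of $\hat{\y}^{\r,\d}=\overleftarrow{\a}(\y^{\r,\d})$; the latter rests on Lemma \ref{lem01}, which says that along the right-to-left activation each anticoordinating type is either saturated ($\overleftarrow{a}_i=n_i$) or sits exactly on its threshold ($\overleftarrow{A}^{i}=\floor{\tau_i}+1$), so that $\hat{\y}^{\r,\d}$ is the $\A$-maximal anticoordinator configuration compatible with \eqref{L}. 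The lower estimate is the mirror image, built from Lemma \ref{lem02} and condition \eqref{R} showing $\tilde{\z}^{\r,\d}$ is $\A$-minimal. Combining these with the defining inequality of $\Psi$ in \eqref{th1_def1}, the benchmark definitions \eqref{th1_def2}--\eqref{th1_def4}, and Lemma \ref{lem_updateRule} then gives the four threshold bounds $A(\x)\le\tau_{\r}+1$, $A(\x)>\tau_{\s}$, $A(\x)\ge\tau'_{\rr}+1$, and $A(\x)<\tau'_{\ss}$, each one binding at an extremal state where the corresponding benchmark agent sits precisely at her threshold.

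Given these four bounds, the fixed-agent cases are immediate from the temper ordering \eqref{temperOrdering}: an $\A$-fixed anticoordinator of type $i\le\r$ sees $A(\x)-1\le\tau_{\r}\le\tau_i$ other $\A$-players and so stays at $\A$, and the three remaining fixed cases follow identically from the appropriate bound. A wandering coordinator is equally easy: both \eqref{L} and \eqref{R} involve the coordinators only through the frozen partial sums $\sum_{k=1}^{\rr}n'_k$ and $\sum_{k=1}^{\ss-1}n'_k$, which its update leaves untouched, so these conditions survive automatically, while $x'_{i}\in\{0,\ldots,n'_i\}$ keeps $\x^{+}\in\X_{\r,\s,\ss,\rr}$.

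The remaining, and most delicate, case is an active wandering anticoordinator of type $i_0$. Here the key observation is that for every wandering type $i$ and every $\x\in\X_{\r,\s,\ss,\rr}$ the left-hand side of \eqref{L} at $i$ is at most $A(\x)$, while the left-hand side of \eqref{R} at $i$ is at least $A(\x)$; in both cases the gap is a sum of the nonnegative quantities $x_k$ and of $n_k-x_k$, $n'_k-x'_k$. These two inequalities let me transfer the switching rule directly onto the constraints. If the agent switches $\B\to\A$, then $A(\x)\le\floor{\tau_{i_0}}$, so the first inequality makes the left side of \eqref{L} at $i_0$ at most $\floor{\tau_{i_0}}$ before the update, hence at most $\floor{\tau_{i_0}}+1\le\floor{\tau_i}+1$ afterward for every $i\le i_0$ (higher temper), whereas \eqref{L} for $i>i_0$ and \eqref{R} for $i<i_0$ are unchanged and \eqref{R} for $i\ge i_0$ only grows; the case $\A\to\B$ is symmetric through the second inequality, and the range $x_{i_0}^{+}\in\{0,\ldots,n_{i_0}\}$ is automatic since a switching agent is by definition currently on the opposite strategy. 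I expect this bookkeeping together with the extremal-bound step to carry the whole argument, and I anticipate the extremal estimates to be the genuine obstacle: matching the greedy activation maps $\overleftarrow{\a},\overrightarrow{\a},\overleftarrow{\c},\overrightarrow{\c}$ to the polytope cut out by \eqref{L} and \eqref{R}—including reconciling the coordinator count $\d$ used to define $\y^{\r,\d}$ with the count $\rr$ appearing in the constraints—is where the recursive coupling among $\r,\s,\ss,\rr$ must finally be untangled.
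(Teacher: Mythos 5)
Your overall strategy is sound, and it is organized genuinely differently from the paper's. You reduce invariance to a single-step closure property---every one-agent update maps $\I_{\r,\d}$ into itself---which rests on first proving the \emph{pointwise} bounds $A(\tilde{\z}^{\r,\d})\le A(\x)\le A(\tilde{\y}^{\r,\d})$ for every $\x\in\I_{\r,\d}$ directly from \eqref{L}, \eqref{R}, and membership in $\X_{\r,\s,\ss,\rr}$. The paper never establishes these bounds pointwise on the set; instead it runs an induction over the whole trajectory with ``first time of violation'' arguments (Lemmas \ref{lem15} through \ref{lem1}, and their mirror images, Lemmas \ref{lem7} and \ref{lem12}), proving the $\A$-fixed and $\B$-fixed properties for all $t$ under the standing hypothesis $x^{\n}(t)\ge\r$, and only afterwards verifying that \eqref{L} and \eqref{R} persist (Lemmas \ref{lem13} and \ref{lem14}). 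Your route is cleaner if the extremal bounds go through, and they do: for instance, writing $r=\hat y^{\n}$ and applying \eqref{L} at $i=r+1$ reproduces exactly the bound $\sum_i x_i\le\floor{\tau_{r+1}}+1+\sum_{i=\r+1}^{r}n_i-\sum_{i=1}^{\rr}n'_i\le\sum_i\hat y_i$ that the paper derives in Case 3 of Lemma \ref{lem2}. Your wandering-anticoordinator bookkeeping is also essentially correct, though the chain you state for \eqref{L} at $i<i_0$ should go through $\mathrm{LHS}_i\le A(\x)\le\floor{\tau_{i_0}}\le\floor{\tau_i}$ rather than through $\mathrm{LHS}_{i_0}$, since $\mathrm{LHS}_i\ge\mathrm{LHS}_{i_0}$ for $i<i_0$. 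What remains genuinely unwritten is the crux you yourself flag: matching $\hat{\y}^{\r,\d}$ and $\hat{\z}^{\r,\d}$ to the polytope cut out by \eqref{L} and \eqref{R} requires (i) the saturation-or-threshold dichotomy of Lemmas \ref{lem01} and \ref{lem02}, (ii) the case split on whether $\hat y^{\n}=b$ or $\s=\hat y^{\n}+1$, and (iii) the inequality $\rr\ge\d$, without which the coordinator floor $\sum_{k=1}^{\rr}n'_k$ appearing in \eqref{L} cannot be compared with the count $\sum_{k=1}^{\d}n'_k$ used to construct $\hat{\y}^{\r,\d}$; the paper proves $\rr\ge\d$ separately, by contradiction, inside Lemma \ref{lem15} using the defining inequality of $\Delta(\r)$. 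Once those three pieces are supplied, your one-step argument closes and yields the theorem.
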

In what follows, we provide the necessary lemmas to prove this theorem.
For simplicity, we drop the superscripts ${\r,\d}$ from the states, e.g., we use $\y$ instead of $\y^{\r,\d}$.
The main part of the proof is to show that given $\x(0)\in\I_{\r,\d}$,
\begin{equation*}
    x^{\n}(t) \geq \r, 
    x^{\o}(t) \leq \s,
    x^{\oo}(t) \leq \ss,
    x^{\nn}(t) \geq \rr\  \forall t\geq 0,
\end{equation*}
which we do by Lemmas \ref{lem1}, \ref{lem10}, \ref{lem11} and \ref{lem7}.
We start by showing $x^{\n}(t) \geq \r$, that is, all type-$1$, \ldots, type-$\r$ anticoordinators will keep playing $\A$, given that we start from the positively invariant set. 
The idea is to use induction and show that as long as $x^{\n}(t)\geq \r$, the number of $\A$-players does not exceed that in $\tilde{\y}$ as stated in Lemma \ref{lem9}, which in turn ensures $x^{\n}(t+1)\geq \r$. 
First, we show the following result.
In what follows, we assume $(\r,\d)\in \Psi$.
\begin{lemma}   \label{lem15} 
    Consider $\x(0)\in\I_{\r,\d}$.
    If for some $T\in\mathbb{Z}_{\geq0}$,  
    \begin{equation}   \label{lem15-eq01}
        x^{\n}(t) \geq \r \quad \forall t\in\{0,\ldots, T\},
    \end{equation}
    then  
    \begin{equation}   \label{lem15-eq02}
         x^{\nn}(t) \geq \d \quad \forall t\in\{0,\ldots, T\}.%,
    \end{equation}
\end{lemma}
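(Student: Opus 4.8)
The plan is to prove the claim by induction on $t$, bootstrapping the $\A$-fixedness of the leading coordinators from that of the leading anticoordinators. The key observation is that $x^{\nn}(t)\geq\d$ is precisely the statement that all type-$1,\ldots,\d$ coordinators play $\A$ at time $t$ (i.e.\ $x'_i(t)=n'_i$ for $i\leq\d$), and $x^{\n}(t)\geq\r$ that all type-$1,\ldots,\r$ anticoordinators play $\A$; the latter is assumed throughout for $t\in\{0,\ldots,T\}$. The driving inequality will be the left half of the definition of $\Delta(\r)$, namely $\tau'_{\d}+1\leq\sum_{i=1}^{\r}n_i+\sum_{i=1}^{\d}n'_i$, which says the $\A$-fixed agents alone furnish enough $\A$-players to keep a type-$\d$ coordinator satisfied.

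For the base case $t=0$, I would use $\x(0)\in\I_{\r,\d}=\I_{\r,\s,\ss,\rr}\subseteq\X_{\r,\s,\ss,\rr}$, whose defining constraints force $x'_i(0)=n'_i$ for every $i\in\{1,\ldots,\rr\}$, i.e.\ $x^{\nn}(0)\geq\rr$. It then remains to note $\rr\geq\d$, whence $x^{\nn}(0)\geq\d$. This inequality is exactly the ``tightening'' promised when $\rr$ is built from $\d$: since $\d\in\Delta(\r)$ makes type-$\d$ coordinators play $\A$ at $\y^{\r,\d}$, and the state $\overrightarrow{\c}(\overrightarrow{\a}^{\r+1}(\z^{\r,\d}))$ carries at least as many $\A$-players, the conjoint-switching/monotonicity property of the coordinating left-to-right best response (Section \ref{sec_activationSequences}) keeps type-$\d$, and hence all lower-temper, coordinators playing $\A$, giving $\rr\geq\d$.

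For the inductive step, assume $x^{\nn}(t)\geq\d$ for some $t$ with $0\leq t<T$. Combining this with the hypothesis $x^{\n}(t)\geq\r$, and counting only the $\A$-fixed agents, gives $A(\x(t))\geq\sum_{i=1}^{\r}n_i+\sum_{i=1}^{\d}n'_i\geq\tau'_{\d}+1$ by $\Delta(\r)$. I then case on the agent $j$ active at time $t$. If $j$ is a coordinator of type $i\leq\d$, she currently plays $\A$, so $A_j(t)=A(\x(t))-1\geq\tau'_{\d}\geq\tau'_i$ (using the ordering $\tau'_i\leq\tau'_{\d}$), and update rule \eqref{eq:coor} keeps her at $\A$; all other type-$\leq\d$ coordinators are untouched, so $x^{\nn}(t+1)\geq\d$. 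If $j$ is any other agent, the type-$1,\ldots,\d$ coordinators are unchanged and the conclusion is immediate. This closes the induction.

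The main obstacle is the base-case inequality $\rr\geq\d$: unlike the inductive step, which is a one-line consequence of the $\Delta(\r)$ bound together with the monotone coordinator temper ordering, $\rr\geq\d$ genuinely requires comparing the $\A$-count at $\overrightarrow{\c}(\overrightarrow{\a}^{\r+1}(\z^{\r,\d}))$ against that at $\y^{\r,\d}$ and invoking the coordinators' conjoint switching. A secondary point demanding care is the passage between the ``number of other $\A$-players'' $A_j$ appearing in \eqref{eq:coor} and the total count $A(\x)$ via Lemma \ref{lem_updateRule}; keeping the $-1$ offset and the direction of the temper ordering straight is exactly what makes the case $i\leq\d$ go through.
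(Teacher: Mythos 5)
Your proposal is correct and follows essentially the same route as the paper: the base case reduces to showing $\rr\geq\d$ (which the paper establishes by contradiction from $\sum_{i=1}^{b}\hat{z}_i\geq\sum_{i=1}^{b}y_i$ and the left inequality in the definition of $\Delta(\r)$, while you sketch the equivalent direct monotonicity argument), and the inductive step is identical, using $A(\x(t))\geq\sum_{i=1}^{\r}n_i+\sum_{i=1}^{\d}n'_i\geq\tau'_{\d}+1$ to keep any active type-$i$, $i\leq\d$, coordinator at $\A$. You correctly flag $\rr\geq\d$ as the one step requiring genuine work.
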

\begin{proof}
    First, we show by contradiction that $\rr \geq \d$.
    Assume on the contrary, $\rr < \d$.
    By definition, $\rr = \overrightarrow{c}^{\nn}(\hat{\z})$.
    Hence, starting from the state $\hat{\z}$, under the coordinating left-to-right activation and after all type-$b'$, ..., type-$(\delta+1)$ coordinators are activated,
    the total number of $\A$-players becomes insufficient for the $\delta$-type coordinators to keep playing $\A$. 
    Therefore,
    \begin{equation*}
        \sum_{i = 1}^{b} \hat{z}_i + \sum_{i=1}^{\d} n'_i
        \leq \tau'_{\d}.
    \end{equation*}
    On the other hand, $ \sum_{i = 1}^{b} \hat{z}_i \geq  \sum_{i = 1}^{b} y_i$, implying  
    \begin{equation*}
        \sum_{i = 1}^{b} y_i + \sum_{i=1}^{\d} n'_i
        \leq \tau'_{\d},
    \end{equation*}  
    which contradicts the definition of $\d$, yielding $\rr \geq \d$.
    On the other hand, $\x(0)\in\I_{\r,\d}$ yields $x^{\nn}(0) \geq \rr$.
    Hence, $x^{\nn}(0) \geq \d$. 
    We complete the proof by induction on $t$. 
    The inequality in \eqref{lem15-eq02} holds for $t = 0$. 
    Suppose it holds for $t = k, k<T$.
    Suppose a type-$p, p\in\{1,\ldots, \d\},$ coordinating agent is active at $t=k$; otherwise, the result is trivial. 
    In view of \eqref{lem15-eq01}, and since the inequality in \eqref{lem15-eq02} holds for $t = k$, it holds that
    \begin{equation*}
        A(\x(k))
        \geq
        \sum_{i=1}^{\r} x_i(k) + \sum_{i=1}^{\d} x'_i(k)
        = \sum_{i=1}^{\r} n_i + \sum_{i=1}^{\d} n'_i 
        \geq \tau'_{\d} + 1.
    \end{equation*}
    Hence, the active agent keeps playing $\A$ at $t=k+1$, completing the proof.
\end{proof}
Let $M$ denote the total number of $\A$-playing coordinators at the state $\y$, i.e., 
$
    M \db \sum_{i = 1}^{\d}n'_i.
$
\begin{lemma}   \label{lem2}
    Consider $\x(0)\in\I_{\r,\d}$.
    If for some $T\in\mathbb{Z}_{\geq0}$,  
    \begin{equation}   \label{lem2-eq01}
        x^{\n}(t) \geq \r \quad \forall t\in\{0,\ldots, T\},
    \end{equation}
    then  
    \begin{equation*}
         \sum_{i=1}^b x_i(T) \leq \sum_{i=1}^b\hat{y}_i. 
    \end{equation*}
\end{lemma}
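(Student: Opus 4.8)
The plan is to prove the bound by induction on $t$, establishing the invariant $\sum_{i=1}^b x_i(t) \le \sum_{i=1}^b \hat y_i$ for every $t\in\{0,\ldots,T\}$ and then reading it off at $t=T$. Throughout I write $\hat a \db \sum_{i=1}^b \hat y_i$ for the number of $\A$-playing anticoordinators in $\hat\y=\overleftarrow{\a}(\y)$, and note that, since $\overleftarrow{\a}$ only moves anticoordinators, the coordinators playing $\A$ in $\hat\y$ are exactly types $1,\ldots,\d$, whence $A(\hat\y)=\hat a + M$. For the base case $t=0$, membership $\x(0)\in\I_{\r,\d}$ forces $x_i(0)=n_i$ for $i\le\r$ and $x_i(0)=0$ for $i\ge\s$, while condition \eqref{L} evaluated at $i=\r+1$ bounds the remaining wandering mass; together with the structure of $\hat\y$ supplied by Lemma \ref{lem01} this gives $\sum_{i=1}^b x_i(0)\le\hat a$.

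For the inductive step I assume the bound at some $t<T$ and inspect the agent active at $t$. If a coordinator is active, or an anticoordinator switches from $\A$ to $\B$, then $\sum_{i=1}^b x_i$ does not increase and the bound is inherited. The only delicate case is an anticoordinator of some type $j$ switching from $\B$ to $\A$, which raises the sum by exactly one; it therefore suffices to rule this out whenever the bound is already tight, i.e. whenever $\sum_{i=1}^b x_i(t)=\hat a$. Observe first that necessarily $j\ge\r+1$, since the hypothesis $x^{\n}(t)\ge\r$ keeps all types $1,\ldots,\r$ fixed at $\A$, so none of them is the switcher.

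So suppose $\sum_{i=1}^b x_i(t)=\hat a$ and a $\B$-playing type-$j$ anticoordinator satisfies the condition to switch to $\A$. Because $x^{\n}(s)\ge\r$ on $\{0,\ldots,T\}$, Lemma \ref{lem15} yields $x^{\nn}(t)\ge\d$, so at least types $1,\ldots,\d$ coordinators play $\A$ and the coordinator $\A$-count is at least $M$. Hence the total $\A$-count satisfies $A(\x(t)) \ge \hat a + M = A(\hat\y)$. On the other hand, the active agent plays $\B$ at time $t$, so $A_j(t)=A(\x(t))$, and her switch to $\A$ under rule \eqref{eq:anti} requires $A(\x(t))\le\tau_j$. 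Combining the two inequalities gives $\tau_j\ge A(\hat\y)$.

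It remains to contradict this by showing $A(\hat\y)>\tau_j$, and this is the crux of the argument and the step I expect to be the main obstacle. The difficulty is that the distribution of $\A$-players among the anticoordinating types at time $t$ need not match that of $\hat\y$: a type that is \emph{fully} $\A$ in $\hat\y$ may carry $\B$-players at time $t$, compensated by excess $\A$-players in other types, so one cannot simply assert that type $j$ plays $\B$ in $\hat\y$. To resolve this I would exploit the maximality of $\hat\y$ as the configuration produced by the anticoordinating right-to-left activation from $\y$: by Lemma \ref{lem01}, for every type $i<\s$ either all its agents end at $\A$ in $\hat\y$ or the running $\A$-count at the instant type $i$ was frozen equals $\floor{\tau_i}+1$; together with the temper ordering \eqref{temperOrdering} and Lemma \ref{lem_updateRule}, this pins $A(\hat\y)$ at or above $\floor{\tau_j}+1$ for exactly those types $j\ge\r+1$ that retain any $\B$-player, so that $A(\hat\y)\ge\floor{\tau_j}+1>\tau_j$. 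This contradicts $\tau_j\ge A(\hat\y)$, ruling out the switch and closing the induction.
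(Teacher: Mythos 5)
Your overall strategy (induction on $t$ with the single scalar invariant $\sum_{i=1}^b x_i(t)\le\sum_{i=1}^b\hat y_i$) differs from the paper's, which is a case analysis on the structure of $\hat{\y}$ combined with a crossing-time argument, and your version has a genuine gap at exactly the step you flagged as the crux. The problem is that your inductive hypothesis is too weak to close the induction. At a tight state with $\sum_i x_i(t)=\hat a$ you correctly reduce to contradicting $\tau_j\ge A(\hat\y)$ for the switching type $j$, and your appeal to Lemma \ref{lem01} does give $A(\hat\y)\ge\floor{\tau_j}+1>\tau_j$ --- but only for types $j$ with $\hat y_j<n_j$. The dangerous switcher is precisely a type $j$ with $\r+1\le j\le \hat y^{\n}$, i.e.\ one that is \emph{fully} $\A$ in $\hat\y$ but carries a $\B$-player at time $t$ because its $\A$-mass has been displaced into lower-temper types. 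For such a $j$, Lemma \ref{lem01} says nothing, and the inequality $A(\hat\y)>\tau_j$ can genuinely fail: $\tau_j\ge\tau_{\hat y^{\n}}$ can be far larger than $A(\hat\y)=\floor{\tau_{\hat y^{\n}+1}}+1+\sum_{i=\r+1}^{\hat y^{\n}}n_i$ when consecutive tempers are widely separated. Concretely, with $\r=\d=0$, $\tau_1=10$, $n_1=3$, $\tau_2=2$, $n_2=5$, one gets $\hat\y=(3,3)$, $\hat a=6$; at the state $(2,4)$ the sum is tight, yet a type-$1$ $\B$-player sees $A=6\le\tau_1$ and switches to $\A$, breaking your invariant. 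Ruling such states out requires knowing they are unreachable from $\I_{\r,\d}$, which is information your induction does not carry (it would need the per-type constraints of \eqref{L} as part of the hypothesis, and the paper only establishes their preservation later, in Lemma \ref{lem13}, using results downstream of this lemma).

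The paper avoids this by never asserting that a $\B\to\A$ switch is impossible at the tight point. Writing $r=\hat y^{\n}$, it splits the anticoordinators into types $\ge r+1$ and types $\r+1,\ldots,r$: the former are controlled by the scalar threshold $\floor{\tau_{r+1}}+1$ on $A(\x(t))$ (once the count exceeds it, no type $\ge r+1$ ever gains an $\A$-player, and the count at the last crossing time, or condition \eqref{L} at $i=r+1$ applied to $\x(0)$, bounds their contribution); the latter are allowed to gain $\A$-players freely and are bounded crudely by their full capacity $\sum_{i=\r+1}^{r}n_i$, which is already entirely contained in $\hat a$ because $A(\hat\y)=\floor{\tau_{r+1}}+1+\sum_{i=\r+1}^{r}n_i$. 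To repair your proof you would either have to adopt this decomposition or strengthen your inductive invariant well beyond the single sum.
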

\begin{proof}
    First, note that in view of Lemma \ref{lem15}, \eqref{lem2-eq01} implies
    \begin{equation}    \label{lem2-eq02}
        \sum_{i = 1}^{b'}x'_i(t) 
        \geq M \quad \forall t\in\{0,\ldots, T\}.
    \end{equation}
    Next, by definition, $\hat{y}^{\n} \leq b$,
    $\hat{y}^{\o} \geq \hat{y}^{\n}+1$, and $\s=\hat{y}^{\o}$. Hence,
    exactly one of the following cases holds:
    
    \emph{Case 1:} $\hat{y}^{\n} = b$.
    Hence, $\hat{\y}$ already contains the maximum number of $\A$-playing anticoordinators in the population, making the result trivial.
    
    \emph{Case 2:} $\hat{y}^{\n} < b$ and $\hat{y}^{\o}= \hat{y}^{\n}+1$.
    %Hence, $\s = \hat{y}^{\n}+1$.
    %Thus, $\s = \r+1$.
    %On the other hand, all type-$\s$ anticoordinators tend to play $\B$ at $\hat{\y}$.
    %    Hence, in view of Lemma \ref{lem_updateRule}, 
    Since none of the type-$\s$ anticoordinators switch to $\A$ upon the anticoordinating right-to-left activation on the state $\y$, 
    in view of Lemma \ref{lem_updateRule}, 
    \begin{equation}    \label{lem2-eq2}
        %A(\hat{\y}) 
        %= A(\y) 
        \sum_{i=1}^{\r} n_i + M > {\tau_{\s}}.
    \end{equation}
    On the other hand, in view of \eqref{lem2-eq01} and \eqref{lem2-eq02}, 
    \begin{equation}    \label{lem2-eq3}
        A(\x(t)) 
         \geq \sum_{i=1}^{\r} n_i + M 
         \overset{\eqref{lem2-eq2}}{>} {\tau_{\s}} 
         \quad \forall t\in\{0,\ldots,T\}. 
    \end{equation}
    %Moreover, $\s = \hat{y}^{\n}+1$, implies
    Now since
    $
        x_i(0) = 0  \forall i\in\{\s,\ldots, b\}
    $,
    the following holds due to \eqref{lem2-eq3} and Lemma \ref{lem_updateRule}:
    $
        x_i(T)  = 0  \forall i\in\{\s,\ldots, b\}.
    $
    %On the other hand, \eqref{lem2-eq01} yields
    %\begin{equation*}
    %    x_i(T) = n_i\quad \forall i\in\{1,\ldots, \r\}.
    %\end{equation*}
    Therefore, the proof is complete for this part as
    \begin{equation*}
         \sum_{i=1}^b x_i(T) 
         %= \sum_{i=1}^{\r} n_i +  \sum_{i=\r+1}^{\s-1} x_i(T)
         \leq \sum_{i=1}^{\s-1} n_i
         = \sum_{i=1}^b\hat{y}_i,
    \end{equation*}
    where the last equality is due to $\s= \hat{y}^{\o}= \hat{y}^{\n}+1$.
    
    \emph{Case 3:} $\hat{y}^{\n} < b$ and $\hat{y}^{\o}>\hat{y}^{\n}+1$. 
    Let $\hat{y}^{\n} = r$.
    The inclusion $(\r,\d)\in\Psi$ results in $r\geq \r$.
    Thus, 
    \begin{equation*}
        A(\hat{\y}) 
        = \overleftarrow{A}^{r+1}(\y)  + \sum_{i = \r+1}^{r} n_i.
    \end{equation*}
    On the other hand, $r+1\in\{1,\ldots,\hat{y}^{\o}-1\}$, and $\hat{y}_{r+1}\neq n_{r+1}$.
    Thus, in view of Lemma \ref{lem01},
    \begin{gather}
        A(\hat{\y}) 
        = \floor{\tau_{r+1}}+1 + \sum_{i = \r+1}^{r} n_i.\nonumber\\
    %\end{equation*}
    %Thus,
    %\begin{equation}
        \Rightarrow
        \sum_{i=1}^b \hat{y}_i
        = \floor{\tau_{r+1}}+1 + \sum_{i = \r+1}^{r} n_i - M.  \label{lem2-eq4}
    \end{gather}   
    Because of \eqref{lem2-eq02}, the result is trivial if $A(\x(T)) \leq \floor{\tau_{r+1}}+1$.
    So consider the case with $A(\x(T)) > \floor{\tau_{r+1}} + 1$.
    Then either of the followings happens:
    
    \emph{Case 3.1:} $\exists s\in\{0,\ldots,T-1\}:$ 
    \begin{equation} \label{lem2-eq5}
        A(\x(s)) = \floor{\tau_{r+1}}+1,
    \end{equation}
    \begin{equation*} %\label{lem2-eq6}
        A(\x(t)) > \floor{\tau_{r+1}}+1\quad \forall t\in\{s+1, \ldots, T\}.
    \end{equation*}
    In view of \eqref{lem2-eq5} and \eqref{lem2-eq02}, 
    \begin{equation} \label{lem2-eq15}
        \sum_{i=1}^b x_i(s) \leq \floor{\tau_{r+1}}+1 - M.
    \end{equation}
    On the other hand, none of the type-$i,i\geq r+1,$ anticoordinators switch to $\A$ after time $s$. 
    Moreover, all type-$i,i\leq \r,$ anticoordinators play $\A$ for all time $t\in\{0,\ldots, T\}$. 
    So compared to $\x(s)$, at most $\sum_{i=\r+1}^{r}n_i$ more anticoordinators may play $\A$ at $\x(T)$.
    Hence, according to \eqref{lem2-eq15},
    \begin{equation*}
        \sum_{i=1}^b x_i(T)
        \leq \floor{\tau_{r+1}} + 1+ \sum_{i = \r+1}^{r} n_i - M,
    \end{equation*}   
    which completes the proof of this part in view of \eqref{lem2-eq4}.
    
    \emph{Case 3.2:} $A(\x(t)) > \floor{\tau_{r+1}} + 1 \forall t\in\{0,\ldots,T\}$.
    It holds that $r+1\in\{\r+1,\ldots,\s-1\}$.
    On the other hand, $\x(0)\in\I_{\r,\d}$.
    Thus, in view of \eqref{L},
     \begin{align*}
        &\sum_{i = 1}^{\r} n_i + \sum_{i=1}^{\rr}n'_i +\!\!\! \sum_{i=r+1}^{b}\!\! x_i(0)
        \leq \floor{\tau_{r+1}} + 1.  
    \end{align*}
    As shown in the proof of Lemma \ref{lem15}, $\rr\geq \d$, and hence, $\sum_{i=1}^{\rr}n'_i \geq M$.
    Thus, the above inequality results in
    \begin{align*}
         \sum_{i = 1}^{\r} n_i + M + \sum_{i=r+1}^{b} x_i(0)
        \leq \floor{\tau_{r+1}}+1.
    \end{align*} 
    On the other hand, by the assumption of this case, for $t\leq T$, none of the type-$i$, $i\in\{r+1, \ldots, b\},$ anticoordinators switch to $\A$. 
    Hence, $\sum_{i=r+1}^{b} x_i(T) \leq \sum_{i=r+1}^{b} x_i(0)$.
    Therefore, 
    \begin{align*}
        &\sum_{i = 1}^{\r} n_i + M + \!\!\! \sum_{i=r+1}^{b}\!\! x_i(T)
        \leq \floor{\tau_{r+1}} + 1 \nonumber\\
        \Rightarrow\ 
        & \sum_{i = 1}^{b} x_i(T) 
        \leq \floor{\tau_{r+1}}+1 -M + \!\!\!\sum_{i=\r+1}^r\!\!\! n_i, 
    \end{align*}    
    which completes the proof of this case in view of \eqref{lem2-eq4}.
\end{proof}

\begin{lemma} \label{lem90} 
    Consider $\x(0) \in \I_{\r,\d}$.
    If for some $T\in\mathbb{Z}_{\geq0}$,  
    \begin{equation}   \label{lem90-eq00}
        x^{\n}(t) \geq \r \quad \forall t\in\{0,\ldots, T\},
    \end{equation}
    then 
    \begin{equation} \label{lem90-eq02}
         x^{\o}(t) \leq \ss \quad \forall t\in\{0,\ldots, T\}.
    \end{equation}
\end{lemma}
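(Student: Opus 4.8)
The plan is to establish \eqref{lem90-eq02} by induction on $t$, i.e.\ to show that every coordinator of type $\ss,\ldots,b'$ keeps playing $\B$, so that the coordinating $\B$-fixed benchmark stays at most $\ss$. The engine of the induction is the observation that, as long as the hypothesis $x^{\n}(t)\geq\r$ holds, the total number of $\A$-players can never exceed the value $A(\tilde{\y})$ attained at the ``maximal'' state $\tilde{\y}=\overleftarrow{\c}(\overleftarrow{\a}(\y))$, and that at $\tilde{\y}$ the very definition of $\ss$ forces the type-$\ss$ coordinators to be content with $\B$. A coordinator of type $i\geq\ss$ therefore never has enough $\A$-neighbours to defect to $\A$.

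First I would record the two ingredients. From Lemma \ref{lem2}, applied at each time $k\leq T$ (its hypothesis \eqref{lem90-eq00} holds for all $t\leq T$), the number of $\A$-playing anticoordinators obeys $\sum_{i=1}^{b}x_i(k)\leq\sum_{i=1}^{b}\hat{y}_i$. Second, since the coordinating right-to-left activation $\overleftarrow{\c}$ leaves the anticoordinating entries untouched and turns the coordinating part into $n'_{\ss-1},\ldots,n'_1$, we have $A(\tilde{\y})=\sum_{i=1}^{b}\hat{y}_i+\sum_{i=1}^{\ss-1}n'_i$; moreover, by $\ss=\overleftarrow{c}^{\oo}(\hat{\y})$ the type-$\ss$ coordinators play $\B$ at $\tilde{\y}$, which by Lemma \ref{lem_updateRule} means $A(\tilde{\y})<\tau'_{\ss}$.

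For the induction, the base case $t=0$ follows from $\x(0)\in\I_{\r,\d}\subseteq\X_{\r,\s,\ss,\rr}$, which gives $x'_i(0)=0$ for all $i\geq\ss$, i.e.\ $x^{\oo}(0)\leq\ss$. For the inductive step, assume the bound holds up to time $k<T$ and suppose a coordinator of some type $i\geq\ss$ becomes active at $k$ (otherwise the bound is trivially preserved). By the inductive hypothesis she currently plays $\B$, so the number of $\A$-players other than herself equals $A(\x(k))$. Combining the anticoordinator bound with the inductive hypothesis $\sum_{i=1}^{b'}x'_i(k)\leq\sum_{i=1}^{\ss-1}n'_i$ yields
\[
A(\x(k)) = \sum_{i=1}^{b} x_i(k) + \sum_{i=1}^{b'} x'_i(k)
\le \sum_{i=1}^{b}\hat{y}_i + \sum_{i=1}^{\ss-1} n'_i
= A(\tilde{\y}) < \tau'_{\ss} \le \tau'_i ,
\]
where the last inequality uses the temper ordering \eqref{temperOrdering}. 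By update rule \eqref{eq:coor} the active coordinator does not switch to $\A$, hence $x^{\oo}(k+1)\leq\ss$, closing the induction.

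The main obstacle is the second ingredient: pinning down $A(\tilde{\y})<\tau'_{\ss}$ with the correct strict inequality and floor conventions of Lemma \ref{lem_updateRule}, and confirming that the sweep producing $\tilde{\y}$ cannot raise the $\A$-count beyond this threshold once type $\ss$ has been activated --- this is precisely where the conjoint-switching property of coordinators (no coordinator with temper above $\tau'_{\ss}$ defects to $\A$ after a type-$\ss$ coordinator has declined to) is invoked. Everything else is bookkeeping, namely adding the anticoordinator count supplied by Lemma \ref{lem2} to the running coordinator count furnished by the induction hypothesis.
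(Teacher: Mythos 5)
Your proposal is correct and takes essentially the same route as the paper's proof: both invoke Lemma \ref{lem2} to cap the anticoordinator count by $\sum_{i=1}^{b}\hat{y}_i$, combine this with the running bound on the coordinators to obtain $A(\x(k))\leq A(\tilde{\y})$, and then use the definition of $\ss$ together with Lemma \ref{lem_updateRule} to conclude that no coordinator of type at least $\ss$ can switch to $\A$. The only difference is cosmetic --- you run a forward induction where the paper argues by contradiction at the first violation time --- and your threshold $A(\tilde{\y})<\tau'_{\ss}$ is, if anything, the slightly sharper (and safer) form of the bound the paper states as $A(\x(s-1))<\tau'_{\ss+1}$.
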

\begin{proof}
    In view of Lemma \ref{lem2}, the following holds due to \eqref{lem90-eq00}:
    \begin{equation} \label{lem90-eq1}
         \sum_{i=1}^b x_i(t) \leq \sum_{i=1}^b\hat{y}_i
         = \sum_{i=1}^{b'}\tilde{y}_i\quad \forall t\leq T.
    \end{equation}
    On the other hand, $\x(0)\in\I_{\r,\d}$ proves \eqref{lem90-eq02} for $t=0$.
    Now we complete the proof by contradiction.
    Assume that \eqref{lem90-eq02} is violated, for the first time at $t=s$, where $0<s\leq T$.
    Namely,
    \begin{equation} \label{lem90-eq3}
         x^{\o}(t) \leq \ss \quad \forall t\in\{0,\ldots, s-1\},
    \end{equation}
    %and
    \begin{equation*} %\label{lem90-eq4}
         x^{\o}(s) > \ss.
    \end{equation*}
    This implies that some type-$r$, $r\in\{\ss+1,\ldots, b'\}$, coordinator switches to $\A$ at $t=s$, which in view of Lemma \ref{lem_updateRule}, yields 
    \begin{equation} \label{lem90-eq5}
         A(\x(s-1)) \geq \tau'_r \overset{\eqref{temperOrdering}}{\geq} \tau'_{\ss+1}.
    \end{equation}    
    However, in view of \eqref{lem90-eq1} and \eqref{lem90-eq3},
    \begin{equation*} %\label{lem90-eq6}
         A(\x(s-1)) \leq \sum_{i=1}^{b}\tilde{y}_i +  \sum_{i=1}^{b'}\tilde{y}'_i 
         = A(\tilde{\y}),
    \end{equation*}
    which according to the definition of $\ss$, results in 
    \begin{equation*} %\label{lem90-eq7}
         A(\x(s-1)) < \tau'_{\ss+1}.
    \end{equation*}
    This contradicts \eqref{lem90-eq5}, completing the proof.
\end{proof}

\begin{lemma} \label{lem9} 
    Consider $\x(0) \in \I_{\r,\d}$.
    If for some $T\in\mathbb{Z}_{\geq0}$,  
    \begin{equation}   \label{lem9-eq01}
        x^{\n}(t) \geq \r \quad \forall t\in\{0,\ldots, T\},
    \end{equation}
    then 
    \begin{equation} \label{lem9-eq02}
         A(\x(t))
         \leq A(\tilde{\y})\quad \forall t\in\{0,\ldots, T\}.
    \end{equation}
\end{lemma}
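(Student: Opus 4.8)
The plan is to bound $A(\x(t))$ by splitting it into its anticoordinating and coordinating contributions and controlling each separately with the two preceding lemmas. Writing $A(\x(t)) = \sum_{i=1}^{b} x_i(t) + \sum_{i=1}^{b'} x'_i(t)$, the claim in \eqref{lem9-eq02} reduces to bounding each of these two sums by the corresponding $\A$-count in $\tilde{\y}$ and then adding.

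First I would handle the anticoordinating part. Lemma \ref{lem2} gives $\sum_{i=1}^{b} x_i(T) \le \sum_{i=1}^{b}\hat{y}_i$ under hypothesis \eqref{lem9-eq01}. Since that hypothesis holds on the whole interval $\{0,\ldots,T\}$, it also holds on $\{0,\ldots,t\}$ for every $t\le T$; applying Lemma \ref{lem2} with $t$ in place of $T$ therefore yields $\sum_{i=1}^{b} x_i(t) \le \sum_{i=1}^{b}\hat{y}_i$ for all $t\le T$. For the coordinating part, Lemma \ref{lem90} keeps the $\B$-fixed coordinating benchmark at or below $\ss$ throughout $\{0,\ldots,T\}$, so every coordinator of type $\ss,\ldots,b'$ plays $\B$ and hence $\sum_{i=1}^{b'} x'_i(t) \le \sum_{i=1}^{\ss-1} n'_i$ for all $t\le T$.

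Adding the two bounds gives $A(\x(t)) \le \sum_{i=1}^{b}\hat{y}_i + \sum_{i=1}^{\ss-1} n'_i$ on $\{0,\ldots,T\}$. It then remains to recognize the right-hand side as $A(\tilde{\y})$. Since $\tilde{\y} = \overleftarrow{\c}(\hat{\y})$ and the coordinating right-to-left activation alters only coordinators, the anticoordinating $\A$-count is unchanged, i.e. $\sum_{i=1}^{b}\tilde{y}_i = \sum_{i=1}^{b}\hat{y}_i$; and by definition $\ss = \overleftarrow{c}^{\oo}(\hat{\y})$, so in $\tilde{\y}$ exactly the coordinators of types $1,\ldots,\ss-1$ play $\A$, giving $\sum_{i=1}^{b'}\tilde{y}'_i = \sum_{i=1}^{\ss-1} n'_i$. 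Hence $A(\tilde{\y}) = \sum_{i=1}^{b}\hat{y}_i + \sum_{i=1}^{\ss-1} n'_i$, which matches the combined bound and proves \eqref{lem9-eq02}.

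The heavy lifting has already been done in Lemmas \ref{lem2} and \ref{lem90}, so I do not expect a genuine obstacle here; the only points requiring care are invoking Lemma \ref{lem2} at every intermediate time rather than only at $t=T$, so that the anticoordinator bound holds uniformly on the interval, and matching the combined bound to $A(\tilde{\y})$ through the explicit structure of $\tilde{\y}$ rather than treating $A(\tilde{\y})$ as an opaque quantity.
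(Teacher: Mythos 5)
Your proposal is correct and follows essentially the same route as the paper, whose proof simply cites Lemmas \ref{lem2} and \ref{lem90} together with the observation that the coordinating right-to-left activation leaves the anticoordinator count unchanged ($\sum_{i=1}^{b}\hat{y}_i=\sum_{i=1}^{b}\tilde{y}_i$). You have merely spelled out the decomposition of $A(\x(t))$ into its anticoordinating and coordinating parts and the identification of the combined bound with $A(\tilde{\y})$, which the paper leaves implicit.
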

\begin{proof}
    The result follows Lemmas \ref{lem2} and \ref{lem90} and the fact that $ \sum_{i=1}^b\hat{y}_i \leq  \sum_{i=1}^b\tilde{y}_i$.
\end{proof}

We are ready to complete one of the main steps in proving Theorem \ref{th1}; that is, all type-$1$, \ldots, type-$\r$, are $\A$-fixed.
\begin{lemma}   \label{lem1}
    $
       \x(0)\in\I_{\r,\d} \Rightarrow x_i(t) = n_i\, \forall i\in\{1,\ldots, \r\}\forall t\geq 0.
    $
\end{lemma}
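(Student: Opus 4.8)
The plan is to prove the statement by induction on $t$, using the self-reinforcing mechanism flagged in the text: as long as the type-$1,\ldots,\r$ anticoordinators have stayed $\A$-fixed, Lemma \ref{lem9} caps the total number of $\A$-players, and this cap is exactly what keeps them $\A$-fixed at the next step. To make the induction close cleanly I would prove the slightly stronger cumulative statement $P(T)$: ``$x_i(t)=n_i$ for all $i\in\{1,\ldots,\r\}$ and all $t\in\{0,\ldots,T\}$.'' The base case $P(0)$ is immediate, since $\x(0)\in\I_{\r,\d}\subseteq\X_{\r,\s,\ss,\rr}$ forces $x_i(0)=n_i$ for $i\le\r$ by the definition of $\X_{\p,\q,\qq,\pp}$.

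For the inductive step, assume $P(T)$. Since $x_i(t)=n_i$ for every $i\le\r$ makes $\r$ an admissible $\A$-fixed anticoordinating index, we get $x^{\n}(t)\geq\r$ for all $t\in\{0,\ldots,T\}$, which is exactly the hypothesis \eqref{lem9-eq01} of Lemma \ref{lem9}. That lemma then yields $A(\x(t))\leq A(\tilde{\y})$ for all $t\le T$. Combining this with the upper bound built into the definition of $\Psi$ in \eqref{th1_def1}, namely $A(\tilde{\y})=A(\overleftarrow{\c}(\overleftarrow{\a}(\y)))\leq\tau_{\r}+1$, gives $A(\x(T))\leq\tau_{\r}+1$.

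It remains to promote this bound to $x_i(T+1)=n_i$ for each $i\le\r$. If the agent active at time $T$ is not a type-$i$ anticoordinator with $i\le\r$, those types are untouched and the claim is trivial. Otherwise, such an agent is playing $\A$ at time $T$ by $P(T)$, and by Lemma \ref{lem_updateRule} it keeps playing $\A$ precisely when $A(\x(T))\leq\floor{\tau_i}+1$. Because $A(\x(T))$ is an integer satisfying $A(\x(T))\leq\tau_{\r}+1$, we have $A(\x(T))\leq\floor{\tau_{\r}}+1$, and the descending order $\tau_i\geq\tau_{\r}$ for $i\le\r$ (from \eqref{temperOrdering}) gives $\floor{\tau_{\r}}+1\leq\floor{\tau_i}+1$. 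Hence the active agent stays $\A$, so $x_i(T+1)=n_i$ for all $i\le\r$, establishing $P(T+1)$ and completing the induction.

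The genuinely hard analytic work has already been discharged in Lemmas \ref{lem2} and \ref{lem90} (hence Lemma \ref{lem9}); within this proof the only delicate point is that the induction is circular in spirit, so I must be careful to invoke Lemma \ref{lem9} with the cumulative hypothesis over $\{0,\ldots,T\}$ rather than pointwise, and to carry out the integer/floor bookkeeping that converts the real-valued bound $A(\x(T))\leq\tau_{\r}+1$ into the per-agent threshold condition $A(\x(T))\leq\floor{\tau_i}+1$, including the borderline case where some $\tau_i$ is an integer.
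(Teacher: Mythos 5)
Your proof is correct and follows essentially the same route as the paper's: both hinge on invoking Lemma \ref{lem9} under the cumulative hypothesis $x^{\n}(t)\geq\r$ over $\{0,\ldots,T\}$ and then combining the bound $A(\tilde{\y})\leq \tau_{\r}+1$ from the definition of $\Psi$ with the temper ordering \eqref{temperOrdering} and Lemma \ref{lem_updateRule}. The paper packages this as a contradiction at the first failure time rather than an explicit induction, but the content is identical.
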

\begin{proof}%[Proof of Lemma \ref{lem1}]
    We prove by contradiction. 
    Assume the contrary, and let $T > 0$ denote the first time there exists an $i\in\{1,\ldots, \r\}$ such that $x_i(T) < n_i$.
    Then
    \begin{gather}    
        x^{\n}(t) \geq \r \quad \forall t\leq T-1,  \label{lem1-eq6}\\
    %\end{equation}
    %and 
    %\begin{equation}  
        A(\x(T - 1)) > {\tau_i} +1
        \xRightarrow{\eqref{temperOrdering}} A(\x(T - 1)) > {\tau_{\r}} + 1. \label{lem1-eq5}
    \end{gather}
    From \eqref{lem1-eq6} and Lemma \ref{lem9}, 
    $
         A(\x(t))
         \leq A(\tilde{\y})\quad \forall t\leq T-1.
    $
    Hence, since $(\r,\delta)\in\Psi$, it holds that $A(\x(T-1)) \leq {\tau_{\r}}+1$, which contradicts \eqref{lem1-eq5}, completing the proof.
\end{proof}
In the following two lemmas, we show all type-$\ss$, \ldots, type-$b'$ coordinators and type-$\s$, \ldots, type-$b$ anticoordinators are $\B$-fixed.
\begin{lemma}   \label{lem10} 
    $
       x(0)\in\I_{\r,\d} 
        \Rightarrow x'_i(t) = 0\, \forall i\in\{\ss,\ldots, b'\}\forall t\geq 0.
    $
\end{lemma}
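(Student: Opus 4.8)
The plan is to show that the global ceiling on the number of $\A$-players established earlier forbids any high-temper coordinator from ever turning to $\A$. Concretely, I would argue by contradiction, mirroring the structure of the proof of Lemma \ref{lem1} but with the inequalities reversed, since a coordinator switches to $\A$ only when enough others already play $\A$, whereas an anticoordinator switches to $\B$ under the same circumstance.

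First I would record that the initial condition already places all the target coordinators on $\B$: because $\I_{\r,\d}=\I_{\r,\s,\ss,\rr}\subseteq\X_{\r,\s,\ss,\rr}$ forces $x'_i(0)=0$ for all $i\in\{\ss,\ldots,b'\}$. Hence, if the claim failed, there would be a well-defined first time $T>0$ at which some type-$r$ coordinator with $r\in\{\ss,\ldots,b'\}$ switches to $\A$. By the best-response rule (Lemma \ref{lem_updateRule}) this switch requires $A(\x(T-1))\geq\tau'_r$, and since $r\geq\ss$, the coordinator temper ordering in \eqref{temperOrdering} gives $A(\x(T-1))\geq\tau'_{\ss}$.

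Next I would invoke the ceiling unconditionally. Lemma \ref{lem1}, already proven, gives $x_i(t)=n_i$ for all $i\in\{1,\ldots,\r\}$ and all $t\geq 0$, so $x^{\n}(t)\geq\r$ for every $t$; this discharges the standing hypothesis of Lemma \ref{lem9}, yielding $A(\x(t))\leq A(\tilde{\y})$ for all $t$, in particular at $t=T-1$. By the definition of $\ss$ in \eqref{th1_def3}, the type-$\ss$ coordinators play $\B$ at $\tilde{\y}$, which by Lemma \ref{lem_updateRule} means $A(\tilde{\y})<\tau'_{\ss}$. Chaining these inequalities gives $A(\x(T-1))\leq A(\tilde{\y})<\tau'_{\ss}$, contradicting $A(\x(T-1))\geq\tau'_{\ss}$. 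As $T$ cannot exist, the claim holds for every $t\geq 0$.

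Equivalently, one may avoid repeating the argument and simply note that Lemma \ref{lem90} already concludes that all type-$\ss,\ldots,b'$ coordinators play $\B$ whenever $x^{\n}(\cdot)\geq\r$ holds on $\{0,\ldots,t\}$, a hypothesis that Lemma \ref{lem1} now supplies for every $t$. The only point requiring care is the passage from ``the type-$\ss$ coordinator does not switch at $\tilde{\y}$'' to the strict inequality $A(\tilde{\y})<\tau'_{\ss}$, together with the correct use of the temper ordering to replace $\tau'_r$ by $\tau'_{\ss}$; I expect this bookkeeping, rather than any conceptual step, to be the main thing to get right.
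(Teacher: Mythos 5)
Your proposal is correct and matches the paper's argument: the paper's proof of Lemma \ref{lem10} is exactly your closing remark, namely that Lemma \ref{lem1} supplies the hypothesis $x^{\n}(t)\geq\r$ of Lemma \ref{lem90} for every $t$, so its conclusion holds for all time. Your longer contradiction argument is simply an inlined version of the proof of Lemma \ref{lem90} (including the correct passage to $A(\tilde{\y})<\tau'_{\ss}$ via the definition of $\ss$), so it is the same approach.
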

\begin{proof}
    Lemma \ref{lem1} implies that \eqref{lem9-eq01} holds for any $T\in\mathbb{Z}_{\geq0}$, implying that so does \eqref{lem90-eq02}, making the result trivial.
\end{proof}
%Next, we show all type-$\s$, \ldots, type-$b$ anticoordinators are $\B$-fixed.
%
\begin{lemma}   \label{lem11} 
    $
       \x(0)\in\I_{\r,\d} 
        \Rightarrow x_i(t) = 0\, \forall i\in\{\s,\ldots, b\}\forall t\geq 0.
    $
\end{lemma}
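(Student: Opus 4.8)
The plan is to run the same scheme used for Lemma~\ref{lem10}, but with the roles of the upper and lower bounds on the number of $\A$-players interchanged. Lemma~\ref{lem10} pins the high-temper coordinators $\ss,\ldots,b'$ to $\B$ because $A(\x(t))$ can never rise above $A(\tilde{\y})$, which is too small for them to reach their activation thresholds. Dually, a type-$r$ anticoordinator with $r\geq\s$ switches to $\A$ only when her number of $\A$-playing opponents does not exceed $\tau_r\leq\tau_\s$, so it suffices to produce a \emph{permanent lower bound} on $A(\x(t))$ that strictly exceeds $\tau_\s$; then no anticoordinator of type $\s,\ldots,b$ can ever move to $\A$.

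First I would establish the floor. Lemma~\ref{lem1} gives $x_i(t)=n_i$ for $i\in\{1,\ldots,\r\}$ and all $t$, so $x^{\n}(t)\geq\r$ holds for every $t$; plugging this into Lemma~\ref{lem15} yields $x^{\nn}(t)\geq\d$ for all $t$, i.e.\ the coordinators of types $1,\ldots,\d$ are also $\A$-fixed. Adding these two permanently-$\A$ blocks gives $A(\x(t))\geq\sum_{i=1}^{\r}n_i+M$ for every $t\geq0$, where $M=\sum_{i=1}^{\d}n'_i$.

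Next I would show this floor already clears $\tau_\s$, namely $\sum_{i=1}^{\r}n_i+M>\tau_\s$. The point is that by definition $\s=\overleftarrow{a}^{\o}(\y)$, so $\hat{y}_i=0$ for every $i\geq\s$; in particular none of the types $\s+1,\ldots,b$ switches to $\A$ during the anticoordinating right-to-left activation of $\y$, hence the running $\A$-count is still exactly $A(\y)=\sum_{i=1}^{\r}n_i+M$ at the moment the type-$\s$ agents are probed. Since those agents also remain at $\B$, the anticoordinator update rule (Lemma~\ref{lem_updateRule}) forces the strict inequality. This is precisely the bound already extracted in Case~2 of Lemma~\ref{lem2}, and I would stress that it holds in general---not only when $\s=\r+1$---because the wandering anticoordinators $\r+1,\ldots,\s-1$ (all zero in $\y$) are updated \emph{after} type $\s$ in the right-to-left sweep and therefore contribute nothing when type $\s$ is tested.

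Finally I would close by contradiction. If the claim failed, let $T>0$ be the first time with $x_i(T)>0$ for some $i\in\{\s,\ldots,b\}$; since one agent is active per step, a type-$r$ anticoordinator with $r\geq\s$ switches from $\B$ to $\A$ at step $T$, and---playing $\B$ at $T-1$---this requires $A(\x(T-1))\leq\tau_r\overset{\eqref{temperOrdering}}{\leq}\tau_\s$. Combined with the floor and the previous inequality this gives $\sum_{i=1}^{\r}n_i+M\leq A(\x(T-1))\leq\tau_\s<\sum_{i=1}^{\r}n_i+M$, a contradiction, so $x_i(t)=0$ for all $i\in\{\s,\ldots,b\}$ and all $t$. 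I expect the crux to be the third paragraph: recognizing that the \emph{minimum} $\A$-population in the invariant set is governed solely by the $\A$-fixed agents (so the wandering anticoordinators cannot drag $A(\x(t))$ down to $\tau_\s$), and justifying $\sum_{i=1}^{\r}n_i+M>\tau_\s$ cleanly from the definition of $\s$ rather than case-by-case; the contradiction step is then routine bookkeeping mirroring Lemma~\ref{lem10}.
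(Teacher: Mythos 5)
Your proposal is correct and follows essentially the same route as the paper's proof: Lemma \ref{lem1} plus Lemma \ref{lem15} give the permanent floor $A(\x(t))\geq A(\y)$, the definition of $\s=\overleftarrow{a}^{\o}(\y)$ together with Lemma \ref{lem_updateRule} gives $A(\y)>\tau_{\s}$, and the temper ordering \eqref{temperOrdering} then rules out any type-$i$, $i\geq\s$, anticoordinator ever switching to $\A$. Your added justification that the type-$\s$ agents are probed while the running count still equals $A(\y)$ is a correct (and slightly more explicit) version of the step the paper leaves implicit.
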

\begin{proof}
    From Lemma \ref{lem1}, we have that 
    $
        x^{\n}(t) \geq \r\  \forall t\geq 0
    $,
    which in view of Lemma \ref{lem15} yields
    $
        x^{\nn}(t) \geq \d\  \forall t\geq 0
    $.
    Hence, 
    \begin{equation} \label{lem11-eq1}
        A(\x(t)) 
        \geq A(\y) \quad \forall t\geq0.
    \end{equation}
    On the other hand, in view of Lemma \ref{lem_updateRule}, $\s = \overleftarrow{a}^{\o}(\y)$ yields
    $
        A(\y) > \tau_{\s}
    $,
    which according to \eqref{lem11-eq1} results in
    %\begin{equation*} %\label{lem11-eq2}
    $
        A(\x(t)) 
        > \tau_{\s} \quad \forall t\geq0.
    $    
    %\end{equation*}
    Thus, no type $i, i\in\{\s+1, \ldots, b\},$ anticoordinator will ever switch to $\A$, and since none of them were playing $\A$ initially, they always play $\B$.
\end{proof}
We finally show all type-$1$, \ldots, type-$\rr$ coordinators are $\A$-fixed.
\begin{lemma}   \label{lem7}
    $
       \x(0)\in\I_{\r,\d} \Rightarrow x'_i(t) = n'_i \,\forall i\in\{1,\ldots, \rr\}\forall t\geq 0.
    $
\end{lemma}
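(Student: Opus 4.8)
The plan is to mirror the upper-bound chain of Lemmas \ref{lem2}, \ref{lem90}, \ref{lem9}, and \ref{lem1} in the opposite direction, exploiting the fact that Lemmas \ref{lem1}, \ref{lem10}, and \ref{lem11} now hold for \emph{all} $t\geq0$ and already pin down the anticoordinators (type $1,\ldots,\r$ fixed at $\A$ and type $\s,\ldots,b$ fixed at $\B$) and the high-temper coordinators (type $\ss,\ldots,b'$ fixed at $\B$). The target is a lower bound $A(\x(t))\geq A(\tilde{\z})$ for all $t$, from which the claim follows by a first-violation argument symmetric to Lemma \ref{lem1}.

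First I would establish the lower-bound counterpart of Lemma \ref{lem2}, namely $\sum_{i=1}^{b}x_i(t)\geq\sum_{i=1}^{b}\hat{z}_i$ for all $t\geq0$. Because the anticoordinators are now fully constrained by Lemmas \ref{lem1} and \ref{lem11}, the only freedom lies among the wandering types $\r+1,\ldots,\s-1$. Using the \emph{upper} bound $A(\x(t))\leq\sum_{i=1}^{\s-1}n_i+\sum_{i=1}^{\ss-1}n'_i$, which follows from Lemma \ref{lem10} capping the $\A$-playing coordinators at $\sum_{i=1}^{\ss-1}n'_i$, together with condition \eqref{R} of $\I_{\r,\d}$ and Lemma \ref{lem02} (anticoordinating left-to-right), I would carry out the same case split as Lemma \ref{lem2}, but with every ``a switch to $\A$ is blocked'' argument replaced by its ``a switch to $\B$ is blocked'' dual, to conclude that no wandering anticoordinator ever drops below its value in $\hat{\z}$.

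Next I would run the first-violation contradiction. Suppose $T$ is the first time some type-$i$ coordinator with $i\leq\rr$ switches to $\B$; then $x'_j(t)=n'_j$ for all $j\leq\rr$ and all $t\leq T-1$, so combining the anticoordinator lower bound with $\sum_{j=1}^{\rr}x'_j(t)=\sum_{j=1}^{\rr}n'_j$ gives $A(\x(t))\geq\sum_{i=1}^{b}\hat{z}_i+\sum_{j=1}^{\rr}n'_j=A(\tilde{\z})$ for $t\leq T-1$. The switch of the type-$i$ coordinator at $T$ forces $A(\x(T-1))<\tau'_i+1\leq\tau'_{\rr}+1$ in view of Lemma \ref{lem_updateRule} and \eqref{temperOrdering}, whereas the definition \eqref{th1_def4} of $\rr$, which keeps the type-$\rr$ coordinators at $\A$ in $\tilde{\z}$, yields $A(\tilde{\z})\geq\tau'_{\rr}+1$ via Lemma \ref{lem_updateRule}. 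Together with $A(\x(T-1))\geq A(\tilde{\z})$ this is a contradiction, so no such $T$ exists and the proof is complete.

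The main obstacle is the first step, re-deriving the anticoordinator lower bound. The case analysis of Lemma \ref{lem2}, and in particular the subcase invoking \eqref{L}, does not dualize entirely mechanically: the roles of \eqref{L} and \eqref{R}, and of the right-to-left and left-to-right activations, must be swapped consistently, and the approximation $\rr\geq\d$ used there must be replaced by the exact coordinator cap supplied by Lemma \ref{lem10}. Making this bookkeeping line up is where the real work lies, whereas the concluding contradiction is a routine mirror of Lemma \ref{lem1}.
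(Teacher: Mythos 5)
Your proposal is correct and follows essentially the same route as the paper: the paper isolates your first step as Lemma \ref{lem12} (whose proof it omits, noting only that it dualizes Lemma \ref{lem2}), and its proof of Lemma \ref{lem7} is exactly your first-violation argument, deriving $A(\x(T-1))\geq A(\tilde{\z})\geq \tau'_{\rr}+1$ from Lemma \ref{lem12} and the definition \eqref{th1_def4} of $\rr$, against $A(\x(T-1))<\tau'_{\rr}+1$ from Lemma \ref{lem_updateRule} and \eqref{temperOrdering}. You also correctly identify that the nontrivial bookkeeping lives in the dualized case analysis, which is precisely the part the paper elides.
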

The proof follows similar steps to those in Lemma \ref{lem1} and is done via the following two lemmas. 
Define $N$ as the total number of $\A$-playing coordinators in the state $\hat{\z}$, i.e., 
$
    N \db \sum_{i = 1}^{\ss-1} n'_i.
$
\begin{lemma}   \label{lem12}
    For $\x(0)\in\I_{\r,\d}$, it holds that
    \begin{equation*}
         \sum_{i=1}^b x_i(t) \geq \sum_{i=1}^b\hat{z}_i \quad \forall t\geq 0. 
    \end{equation*}
\end{lemma}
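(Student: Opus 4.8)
The plan is to prove Lemma \ref{lem12} as the exact lower-bound mirror of Lemma \ref{lem2}: every role played there by the \emph{maximal} state $\hat{\y}=\overleftarrow{\a}(\y^{\r,\d})$ (reached by a right-to-left activation) is played here by the \emph{minimal} state $\hat{\z}=\overrightarrow{\a}^{\r+1}(\z^{\r,\d})$ (reached by the left-to-right activation starting from type $\r+1$), with all inequalities reversed and condition \eqref{R} taking the place of \eqref{L}. First I would collect the facts that are now available unconditionally, since Lemma \ref{lem1} has already been established: for all $t\geq 0$ we have $x_i(t)=n_i$ for $i\in\{1,\ldots,\r\}$ (Lemma \ref{lem1}), $x_i(t)=0$ for $i\in\{\s,\ldots,b\}$ (Lemma \ref{lem11}), and $x'_i(t)=0$ for $i\in\{\ss,\ldots,b'\}$ (Lemma \ref{lem10}). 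The last fact gives the operative coordinator bound $\sum_{i}x'_i(t)\leq N=\sum_{i=1}^{\ss-1}n'_i$, the upper bound that here plays the symmetric role of the lower bound $\sum_i x'_i(t)\geq M$ used in Lemma \ref{lem2}. This reduces the claim to a lower bound on the wandering block $i\in\{\r+1,\ldots,\s-1\}$ and lets me write $A(\x(t))\leq \sum_{i=1}^{\r}n_i+\sum_{i=\r+1}^{\s-1}x_i(t)+N$.

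Second, I would read off $\sum_{i=1}^{b}\hat{z}_i$ from the construction of $\hat{\z}$. Using the analog of Lemma \ref{lem02} for the activation starting from type $\r+1$, for the lowest-temper wandering type $r'$ that still plays $\A$ in $\hat{\z}$ the total $\A$-count equals $\floor{\tau_{r'}}+1$ at the instant type $r'$ has just been activated; the boundary case in which $\hat{\z}$ already holds every wandering anticoordinator at $n_i$ is immediate. This pins $\sum_{i=1}^{b}\hat{z}_i$ to an explicit quantity $\floor{\tau_{r'}}+1-N$ plus the saturated lower types, mirroring \eqref{lem2-eq4}.

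Third, I would run the mirror case analysis on $A(\x(T))$ versus the level $\floor{\tau_{r'}}+1$. If $A(\x(T))$ already lies at or below that level, the coordinator bound $N$ finishes the estimate directly. Otherwise I track either the last time $s<T$ at which $A(\x(s))$ equals that level (the mirror of Case 3.1), after which I bound from above how many wandering anticoordinators can have switched from $\A$ to $\B$; or the case in which $A(\x(t))$ stays strictly above the level throughout (the mirror of Case 3.2), where I invoke $\x(0)\in\I_{\r,\d}$ through condition \eqref{R} for the appropriate type, whose $\sum_{k=1}^{\ss-1}n'_k=N$ term matches the coordinator upper bound $N$ exactly, yielding the required lower bound on the wandering anticoordinators at $\x(0)$ and hence at $\x(T)$.

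The main obstacle will be the threshold bookkeeping for the left-to-right-from-$\r+1$ activation. Unlike the right-to-left activation underlying Lemma \ref{lem2}, here the low-temper types $\s,\ldots,b$ already sit at $\B$ in $\z^{\r,\d}$, so the Lemma \ref{lem02} analog must be applied only to the genuinely switching wandering types, and I must identify the correct critical index $r'$ and keep the inequality direction straight: anticoordinators now switch \emph{to} $\A$ when $A(\x)$ is small, so the critical identity $A(\x(s))=\floor{\tau_{r'}}+1$ and the ``how many can have left $\A$'' count run opposite to Lemma \ref{lem2} via Lemma \ref{lem_updateRule}. Making the coordinator bound $N$ align precisely with the $N$-term of \eqref{R} is the crux that lets the invariance condition close the argument.
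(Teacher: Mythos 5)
Your overall strategy is exactly the one the paper intends: the paper's own proof of Lemma \ref{lem12} is omitted with the remark that it is ``similar to that of Lemma \ref{lem2}'', and your mirroring dictionary --- replace $\hat{\y}$ by $\hat{\z}$, replace the lower coordinator bound $M$ (from Lemma \ref{lem15}) by the upper bound $N=\sum_{i=1}^{\ss-1}n'_i$ (now available unconditionally from Lemmas \ref{lem1}, \ref{lem10} and \ref{lem11}), replace Lemma \ref{lem01} by Lemma \ref{lem02}, and replace \eqref{L} by \eqref{R} --- is the correct one. Your observation that the $\sum_{k=1}^{\ss-1}n'_k$ term of \eqref{R} matches $N$ exactly is indeed the crux that closes the argument.

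However, the case analysis in your third paragraph has its directions inverted, and as written it would stall. Since you are proving a \emph{lower} bound, the usable chain is $\sum_i x_i(T)=A(\x(T))-\sum_i x'_i(T)\geq A(\x(T))-N$, and the target value is $\sum_{i=1}^{b}\hat{z}_i=\floor{\tau_{r'}}+1-N-\sum_{i=r'+1}^{\s-1}n_i$; note the correction term is \emph{subtracted} (the wandering types activated after the critical type $r'$ all end at $0$ in $\hat{\z}$, the mirror of the types ending saturated at $n_i$ in $\hat{\y}$), not ``plus the saturated lower types.'' Consequently the case dispatched directly by the coordinator bound $N$ is $A(\x(T))\geq\floor{\tau_{r'}}+1$, not ``at or below that level''; if $A(\x(T))$ is at or below the level, the bound $A(\x(T))-N$ proves nothing. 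The work is needed precisely when $A(\x(T))<\floor{\tau_{r'}}+1$: either there is a last time $s$ with $A(\x(s))=\floor{\tau_{r'}}+1$ after which $A$ stays \emph{strictly below} that level --- whence no type-$i$ anticoordinator with $i\leq r'$ switches to $\B$ after $s$ and at most $\sum_{i=r'+1}^{\s-1}n_i$ agents can leave $\A$ --- or $A(\x(t))<\floor{\tau_{r'}}+1$ for all $t\leq T$, which is the branch where \eqref{R} with index $r'$ at $t=0$ must be invoked. Your plan assigns the \eqref{R} argument to the sub-case ``$A$ stays strictly above the level,'' which is in fact trivial for a lower bound, while labelling as trivial the case that carries the whole argument. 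The ingredients you list are the right ones; they just belong to the opposite branches of the case split, so the branch labels need to be swapped (and the sign of the correction term fixed) before the plan goes through.
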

\begin{proof}
    The proof is similar to that of Lemma \ref{lem2}, which we skip here due to the space limit.
\end{proof}
\begin{proof}[Proof of Lemma \ref{lem7}]
    According to Lemma \ref{lem12},
    \begin{equation} \label{lem91_eq1}
        \sum_{i=1}^b x_i(t) \geq \sum_{i=1}^b \hat{z}_i = \sum_{i=1}^b\tilde{z}_i \quad \forall t\geq 0.
    \end{equation}
    Now we prove by contradiction. 
    Assume the contrary, and let $T > 0$ denote the first time $x'_i(T) < n'_i$ for some $i\in\{1,\ldots, \rr\}$.
    Therefore, 
    \begin{gather}   
        x'_i(t) = n'_i\quad \forall i\in\{1,\ldots, \rr\}\forall t\leq T-1, \label{lem7-eq1} \\
    %\end{equation}
    %and 
    %\begin{equation}  
        A(\x(T - 1)) < \tau'_i + 1 \overset{\eqref{temperOrdering}}{\leq} \tau'_{\rr} + 1. \label{lem7-eq5}
    \end{gather}
    However, in view of \eqref{lem91_eq1} and \eqref{lem7-eq1}, 
    $
         A(\x(T-1))
         \geq \sum_{i=1}^b\tilde{z}_i + \sum_{i=1}^{b'}\tilde{z}'_i 
         = A(\tilde{\z}),
    $
    which according to the definition of $\rr$, results in 
    $
         A(\x(T-1)) \geq \tau'_{\rr} + 1.
    $
    This contradicts \eqref{lem7-eq5}, completing the proof.
\end{proof}
Finally, we need to show that the two conditions \eqref{L} and \eqref{R} in the definition of $\I_{\r,\d}$ will not be violated over time.
\begin{lemma}   \label{lem13}
    Let $\x(0) \in \I_{\r,\d}$. 
    Then for all $t\geq 0$,
    \begin{equation*} %\label{lem13-eq01}
        \sum_{k=1}^{\r} n_k +\! \sum_{k=1}^{\rr} n'_k +\! \sum_{k=i}^{\s-1}\! x_k(t) \leq \floor{\tau_i}+1\  \forall i\in\{\r+1,\ldots,\s-1\}.
    \end{equation*}    
\end{lemma}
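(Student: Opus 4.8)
The plan is to prove the inequality by induction on $t$, exploiting that it is exactly condition \eqref{L} for the benchmarks $(\r,\s,\ss,\rr)$, so that the base case holds by hypothesis. First I would observe that the ``$\A$-fixed'' offset $C\db\sum_{k=1}^{\r}n_k+\sum_{k=1}^{\rr}n'_k$ is constant in time: Lemma \ref{lem1} gives $x_k(t)=n_k$ for all $k\le\r$ and Lemma \ref{lem7} gives $x'_k(t)=n'_k$ for all $k\le\rr$, so $\sum_{k=1}^{\r}x_k(t)+\sum_{k=1}^{\rr}x'_k(t)=C$ for every $t$. Thus it suffices to show $C+\sum_{k=i}^{\s-1}x_k(t)\le\floor{\tau_i}+1$ for each $i\in\{\r+1,\ldots,\s-1\}$ and all $t$, with the case $t=0$ immediate from $\x(0)\in\I_{\r,\d}$.

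For the inductive step, fix $i$ and assume the bound at time $t$. The partial sum $\sum_{k=i}^{\s-1}x_k$ can only increase if the active agent is a wandering anticoordinator of some type $j$ with $i\le j\le\s-1$ that switches from $\B$ to $\A$; indeed types $1,\ldots,\r$ are $\A$-fixed (Lemma \ref{lem1}) and types $\s,\ldots,b$ are $\B$-fixed (Lemma \ref{lem11}), while any coordinator update or a switch of an anticoordinator of type below $i$ leaves the sum unchanged. In every case where the sum does not increase, the bound at $t+1$ follows directly from the inductive hypothesis, so only the switching case requires an argument.

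In the switching case exactly one agent changes, so $\sum_{k=i}^{\s-1}x_k(t+1)=\sum_{k=i}^{\s-1}x_k(t)+1$. The crux is to upgrade the inductive bound by one unit of slack using the update rule. Since the type-$j$ anticoordinator plays $\B$ at time $t$ and switches to $\A$, Lemma \ref{lem_updateRule} yields $A(\x(t))\le\tau_j$. Counting only the $\A$-fixed agents and the $\A$-playing wandering anticoordinators of types $i,\ldots,\s-1$ gives the lower bound $A(\x(t))\ge C+\sum_{k=i}^{\s-1}x_k(t)$. Combining these with $\tau_j\le\tau_i$ (temper ordering \eqref{temperOrdering}, since $j\ge i$) and the integrality of the left-hand side yields $C+\sum_{k=i}^{\s-1}x_k(t)\le\floor{\tau_i}$, whence $C+\sum_{k=i}^{\s-1}x_k(t+1)\le\floor{\tau_i}+1$, closing the induction.

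The main obstacle is precisely this last upgrade: applying the inductive hypothesis for index $i$ naively would give only $\floor{\tau_i}+2$ after the switch, so the proof must use the update rule to show that whenever the partial sum is about to grow it was in fact at most $\floor{\tau_i}$ beforehand. The required slack comes from the gap between $\tau_j$ and $\floor{\tau_i}+1$ together with discarding the nonnegative wandering-coordinator terms in the lower bound for $A(\x(t))$; one must check that the $\A$-fixed count genuinely equals $C$ (via Lemmas \ref{lem1} and \ref{lem7}) and that $j\ge i$ so that $\tau_j\le\tau_i$.
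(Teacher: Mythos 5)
Your proposal is correct and is essentially the paper's argument: the paper phrases it as a contradiction at the first violation time rather than an induction, but the core step is identical — when a $\B$-playing wandering anticoordinator of type $j\ge i$ is about to switch to $\A$, one lower-bounds $A(\x(t))$ by the $\A$-fixed count plus the relevant partial sum (via Lemmas \ref{lem1} and \ref{lem7}) and upper-bounds it by $\tau_j\le\tau_i$ via the update rule, yielding the one unit of slack needed.
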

\begin{proof}
    We prove by contradiction. 
    Assume the contrary, and let $T\geq 1$ be the first time the inequality is violated by some type-$s,s\in\{\r+1,\ldots,\s-1\},$ anticoordinator.
    Then,
    \begin{gather} 
        \sum_{k=1}^{\r} n_k +\! \sum_{k=1}^{\rr} n'_k +\! \sum_{k=s}^{\s-1}\! x_k(T-1) = \floor{\tau_s}+1, \label{lem13-eq2} \\
    %\end{equation}
    %and 
    %\begin{equation*}
        \sum_{k=1}^{\r} n_k +\! \sum_{k=1}^{\rr} n'_k +\! \sum_{k=s}^{\s-1}\! x_k(T) > \floor{\tau_s}+1. \nonumber
    \end{gather}
    Hence, the active agent at time $T-1$ is a $\B$-playing type-$p,p\in\{s,\ldots,\s-1\},$ anticoordinator who switches to $\A$. 
    However, in view of Lemmas \ref{lem1} and \ref{lem7}, 
    \begin{equation*}
        A(\x(T-1)) 
        \geq \sum_{k=1}^{\r} n_k +\! \sum_{k=1}^{\rr} n'_k 
        +\!\!\!\! \sum_{k=\r+1}^{\s-1}\!\!\!\! x_k(T-1)
        \overset{\eqref{lem13-eq2}}{\geq}
        \floor{\tau_s}+1,
    \end{equation*}
    implying that the active agent plays $\B$ at time $T$, which is a contradiction, and the proof is complete.
\end{proof}
\begin{lemma} \label{lem14}
    Let $\x(0) \in \I_{\r,\d}$. 
    Then for all $t\geq 0$,
    \begin{align*}
        &\sum_{k=1}^{\ss-1} n'_k+\sum_{k=1}^{\r} n_k  +\!\!\!\! \sum_{k=\r+1}^{i}\!\!\!\! x_k(t) 
        + \!\!\sum_{k=i+1}^{\s-1}\!\! n_k 
        \geq \floor{\tau_i}+1\\
         &\qquad\qquad\qquad\qquad\qquad\qquad\qquad\   \forall i\in\{\r+1,\ldots,\s-1\}.
    \end{align*}
\end{lemma}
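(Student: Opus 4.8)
The plan is to adapt the contradiction argument of Lemma~\ref{lem13} to the lower-bound condition \eqref{R}, the essential change being that I now bound the number of $\A$-players from \emph{above}, since \eqref{R} is threatened precisely when too many wandering anticoordinators abandon $\A$. So I would suppose, toward a contradiction, that $T\geq1$ is the first time the claimed inequality fails, say for type $s\in\{\r+1,\ldots,\s-1\}$. Since every inequality holds at $T-1$, only the agent active at time $T-1$ can trigger the violation, and as the sole $t$-dependent term in the type-$s$ expression is $\sum_{k=\r+1}^{s}x_k(t)$, this active agent must be a type-$p$ anticoordinator with $p\in\{\r+1,\ldots,s\}$ who switches from $\A$ to $\B$: a switch in the opposite direction, a switch by a coordinator, or a switch by an anticoordinator of type exceeding $s$ would all leave the left-hand side unchanged or larger, hence could not produce the required strict decrease. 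As this switch lowers the expression by exactly one, the value at $T-1$ must be tight, namely
\begin{equation*}
    \sum_{k=1}^{\ss-1} n'_k+\sum_{k=1}^{\r} n_k+\!\!\sum_{k=\r+1}^{s}\!\!x_k(T-1)+\!\!\sum_{k=s+1}^{\s-1}\!\!n_k=\floor{\tau_s}+1.
\end{equation*}

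The core step is then to bound $A(\x(T-1))$ from above. Invoking Lemma~\ref{lem1} (types $1,\ldots,\r$ anticoordinators are $\A$-fixed), Lemma~\ref{lem11} (types $\s,\ldots,b$ anticoordinators are $\B$-fixed), and Lemma~\ref{lem10} (types $\ss,\ldots,b'$ coordinators are $\B$-fixed), the count decomposes as $A(\x(T-1))=\sum_{k=1}^{\r}n_k+\sum_{k=\r+1}^{\s-1}x_k(T-1)+\sum_{k=1}^{\ss-1}x'_k(T-1)$. Bounding the coordinator contribution by $\sum_{k=1}^{\ss-1}n'_k$ and the high-index wandering anticoordinators by $\sum_{k=s+1}^{\s-1}x_k(T-1)\leq\sum_{k=s+1}^{\s-1}n_k$ recovers exactly the left-hand side of the tight boundary identity, whence $A(\x(T-1))\leq\floor{\tau_s}+1$.

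To finish I would translate this aggregate bound into the per-agent threshold. Because the active type-$p$ anticoordinator is playing $\A$ at $T-1$, the number of \emph{other} $\A$-players she observes is $A(\x(T-1))-1\leq\floor{\tau_s}\leq\tau_s\leq\tau_p$, where the last inequality uses $p\leq s$ together with the temper ordering \eqref{temperOrdering}. By the anticoordinator update rule (Lemma~\ref{lem_updateRule}), this forces her to keep playing $\A$ at time $T$, contradicting the supposed switch to $\B$ and completing the argument.

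I expect the main obstacle to be precisely this last conversion: one must keep careful track of the active agent's own contribution to $A(\x(T-1))$ (the ``$-1$'', which was absent in Lemma~\ref{lem13} because there the active agent was a $\B$-player) and of the floor in $\floor{\tau_s}+1$, and must confirm at the outset that no switch other than a type-$p\leq s$ anticoordinator abandoning $\A$ could have caused the first violation. Once this bookkeeping is settled, the remaining inequalities are routine, mirroring Lemma~\ref{lem13} line for line with the direction of the bound reversed.
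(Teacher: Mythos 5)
Your proposal is correct and matches the paper's intended argument: the paper's proof of this lemma is literally ``similar to that of Lemma \ref{lem13},'' and you have carried out exactly that mirror argument (contradiction at the first violation time, tightness at $T-1$, an upper bound on $A(\x(T-1))$ via the $\A$-fixed and $\B$-fixed types from Lemmas \ref{lem1}, \ref{lem10}, and \ref{lem11}, and the temper ordering to force the active $\A$-playing anticoordinator to stay at $\A$). Your bookkeeping of the active agent's own contribution and the floor is also consistent with Lemma \ref{lem_updateRule}, so no gap remains.
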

\begin{proof}
    The proof is similar to that of Lemma \ref{lem13}.
\end{proof}
\begin{proof}[Proof of Theorem \ref{th1}]
    The proof follows Lemmas \ref{lem1}, \ref{lem10}, \ref{lem11}, \ref{lem7}, \ref{lem13} and \ref{lem14}.
\end{proof}
\subsection{Bounds on the total number of $\A$-players}
The invariance of $\I_{\r,\d}$ readily confines the total number of $\A$-players in the population. 
\begin{corollary}   \label{cor_exremumNumberOfAPlayers}
    If $\x(0)\in\I_{\r,\d}$, then for all $t\geq 0$,
    % \begin{align*}
    %     A(\x(t))&\geq \max\left\{\ceil{\tau'_{\rr}}+1, \floor{\tau_{\s}}+1\right\}, \\
    %     A(\x(t)) &\leq 
    %     \min\left\{\ceil{\tau'_{\ss}}-1, \floor{\tau_{\r}}+1\right\}. 
    % \end{align*}
    \begin{equation*}\scalebox{.89}{\text{$
        \max\left\{\ceil{\tau'_{\rr}}+1, \floor{\tau_{\s}}+1\right\} 
        \leq A(\x(t)) \leq 
        \min\left\{\ceil{\tau'_{\ss}}-1, \floor{\tau_{\r}}+1\right\}. $}}
    \end{equation*}
\end{corollary}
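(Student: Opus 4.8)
The plan is to sandwich $A(\x(t))$ between the numbers of $\A$-players at the two extremal states $\tilde{\z}$ and $\tilde{\y}$, and then read off the four numerical bounds from the resting conditions of the benchmark agents. By Theorem \ref{th1} the trajectory stays in $\I_{\r,\d}$, so the structural conclusions of its constituent lemmas hold for all $t\geq0$. In particular, Lemma \ref{lem1} gives $x^{\n}(t)\geq\r$ for every $t$, whence Lemma \ref{lem9} yields $A(\x(t))\leq A(\tilde{\y})$; symmetrically, Lemma \ref{lem12} gives $\sum_i x_i(t)\geq\sum_i\tilde{z}_i$ and Lemma \ref{lem7} gives $x'_i(t)=n'_i$ for $i\leq\rr$, which combine (since $\sum_i\tilde{z}'_i=\sum_{i=1}^{\rr}n'_i$) to $A(\x(t))\geq A(\tilde{\z})$; and the argument inside Lemma \ref{lem11} already delivers $A(\x(t))>\tau_{\s}$. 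Thus for all $t\geq0$,
\[
   A(\tilde{\z}) \leq A(\x(t)) \leq A(\tilde{\y}), \qquad A(\x(t))>\tau_{\s}.
\]
It then remains only to bound $A(\tilde{\y})$ from above and $A(\tilde{\z})$ from below.

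For the upper bounds, note first that $(\r,\d)\in\Psi$ gives $A(\tilde{\y})=A(\overleftarrow{\c}(\overleftarrow{\a}(\y)))\leq\tau_{\r}+1$ directly by \eqref{th1_def1}. Second, $\tilde{\y}=\overleftarrow{\c}(\hat{\y})$ is a fixed point of the coordinating dynamics in which, by the definition \eqref{th1_def3} of $\ss$, the type-$\ss$ coordinators play $\B$; since a resting $\B$-playing coordinator sees $A_j=A(\tilde{\y})$ other $\A$-players, update rule \eqref{eq:coor} (via Lemma \ref{lem_updateRule}) forces $A(\tilde{\y})<\tau'_{\ss}$. As $A(\x(t))$ is an integer, $A(\x(t))\leq A(\tilde{\y})\leq\tau_{\r}+1$ rounds to $A(\x(t))\leq\floor{\tau_{\r}}+1$, while $A(\x(t))\leq A(\tilde{\y})<\tau'_{\ss}$ rounds to $A(\x(t))\leq\ceil{\tau'_{\ss}}-1$; taking the smaller of the two proves the upper estimate.

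The lower bounds follow the mirror image. Integrality turns $A(\x(t))>\tau_{\s}$ into $A(\x(t))\geq\floor{\tau_{\s}}+1$. For the other, $\tilde{\z}=\overrightarrow{\c}(\hat{\z})$ is again a coordinating fixed point in which, by \eqref{th1_def4}, the type-$\rr$ coordinators play $\A$; a resting $\A$-playing coordinator sees $A_j=A(\tilde{\z})-1\geq\tau'_{\rr}$, so $A(\tilde{\z})\geq\tau'_{\rr}+1$, and hence $A(\x(t))\geq A(\tilde{\z})\geq\ceil{\tau'_{\rr}}+1$. Taking the larger of the two lower bounds completes the proof. Degenerate benchmarks are absorbed by the conventions $\tau_{0}=n$ and $\tau'_{0}=-2$ (with the analogous choices for $\tau_{b+1}$ and $\tau'_{b'+1}$): when $\r=0$ or $\rr=0$, for instance, the corresponding one-sided bound becomes $n+1$ or $-1$ and is never active, so the fixed-point arguments need only be invoked when the relevant benchmark type actually exists.

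The genuinely delicate step is the second one in each direction, namely converting ``the benchmark coordinator rests'' into a sharp inequality on the \emph{total} $\A$-count. The subtlety is the single-unit gap between $A_j$, the number of \emph{other} $\A$-players that governs \eqref{eq:coor}, and $A(\x)$ itself: this is exactly why an $\A$-resting coordinator shifts its bound up by one while a $\B$-resting coordinator shifts its bound down, and why the two coordinating benchmarks contribute ceilings rather than the floors coming from the anticoordinating sides. Keeping these rounding directions consistent and verifying that each survives the integrality step against $A(\x(t))$ is where care is required, whereas the monotonicity lemmas already do the structural heavy lifting.
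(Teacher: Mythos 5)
Your proof is correct and follows essentially the route the paper intends: the paper states only that the bounds follow ``readily'' from the invariance, and your argument fills this in using precisely the machinery of Theorem \ref{th1} (Lemmas \ref{lem1}, \ref{lem9}, \ref{lem12}, \ref{lem7}, and the inequality $A(\x(t))>\tau_{\s}$ from Lemma \ref{lem11}), together with the defining properties of $\Psi$, $\ss$, and $\rr$ and the integrality rounding. Your care with the one-unit offset between $A_j$ and $A(\x)$ at the coordinating benchmarks, and with the degenerate conventions $\tau_0=n$, $\tau'_0=-2$, matches what the corollary's floors and ceilings require.
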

%%%%%%%%%%%%%%%%%%%%%%%%%%%%%%%%%%%
%%%%%%%%%%%%%%%%%%%%%%%%%%%%%%%%%%%%%%%%%%%%%%%%%%%%%%%%%%%%%%%%%%%%%%%%%%%
\section{Revisiting Example \ref{example1}} \label{sec_revisiting} 
A positively invariant set is \emph{minimal} if it does not admit a proper subset that is positively invariant. 
According to our simulation results, the population dynamics in Example \ref{example1} possess two minimally positively invariant sets and one equilibrium. 
Here, we use the results in Section \ref{sec_positivelyInvariantSets} to find or approximate these limit sets. 

The benchmarks for the first minimally positively invariant set in the example are 
$\tb{\p = 1}$, $\tr{\q = 5}$, $\tr{\qq = 3}$, $\tb{\pp = 1}$.
Denote this invariant set by $\mathcal{O}_{\tb{1}, \tr{5}, \tr{3}, \tb{1}}$.
For $\r=\p=1$, we obtain $\Delta(1)=\{0,1,3\}$.
Consider the pair $(\r, \d) = (1,0)$, resulting in  
\begin{gather*}
    \overleftarrow{\c}(\overleftarrow{\a}(\y^{1,0})) 
    = (4,2,1,3,15,1,10,2,3)    \\
%\end{equation*}
%and hence, 
%\begin{equation*}
    \Rightarrow
     A(\overleftarrow{\c}(\overleftarrow{\a}(\y^{1,0}))) = 41 > {\tau_1} + 1 = 19,
\end{gather*}
implying that $(1,0)\not\in\Psi$.

The case with $(\r,\d) = (1,1)$ results in 
\begin{gather*}
    \overleftarrow{\c}(\overleftarrow{\a}(\y^{1,1})) 
    = (4,1,1,1,0,0,0,2,3)\\
%\end{equation*}
%and hence, 
%\begin{equation*}
    \Rightarrow
    {\tau_{2}} = 9 < A(\overleftarrow{\c}(\overleftarrow{\a}(\y^{1,1}))) = 12 \leq {\tau_1} + 1 = 19, 
\end{gather*}
implying $(1,1)\in\Psi$.
Correspondingly, $\s = \overleftarrow{a}^{\o}(\y^{1,1}) = 5$ as none of the anticoordinators play $\B$, and $\ss = \overleftarrow{c}^{\oo}(\overleftarrow{\a}(\y^{1,1})) = 3$. 
Finally, to obtain $\rr$, we have
\begin{gather*}
    \z^{1,1} = 
    (4,3,1,3,0,0,0,2,3)\\
%\end{equation*}
%to obtain 
%\begin{equation*}
    \Rightarrow
    \overrightarrow{\a}^{2}(\z^{1,1}) = 
    (4,0,0,0,0,0,0,2,3)\\
%\end{equation*}
%and then
%\begin{equation*}\
    \Rightarrow
    \overrightarrow{\c}(\overrightarrow{\a}^{2}(\z^{1,1})) = 
    (4,0,0,0,0,0,0,0,3).
\end{gather*}
Hence, $\rr= \overrightarrow{c}^{\nn}(\overrightarrow{\a}^{2}(\z^{1,1})) = 1$
Therefore, we obtain the quadruple $(\r,\s,\ss,\rr) = (\tb{1},\tr{5},\tr{3},\tb{1})$, and the set $\I_{\tb{1},\tr{5},\tr{3},\tb{1}}$ is positively invariant according to Theorem \ref{th1}. 
The benchmarks of $\I_{\tb{1},\tr{5},\tr{3},\tb{1}}$ matches those of $\mathcal{O}_{\tb{1}, \tr{5}, \tr{3}, \tb{1}}$, and according to the simulations, $\mathcal{O}_{\tb{1}, \tr{5}, \tr{3}, \tb{1}}$ is the only set with these benchmarks.
On the hand, every positively invariant set includes a subset that is minimally positively invariant.
So $\mathcal{O}_{\tb{1}, \tr{5}, \tr{3}, \tb{1}}\subseteq\I_{\tb{1},\tr{5},\tr{3},\tb{1}} $.
Now we investigate conditions \eqref{L} and \eqref{R} to determine the form of $\I_{\tb{1},\tr{5},\tr{3},\tb{1}}$.
For $i=4,3,2$, condition \eqref{L} becomes
\begin{gather} 
         n'_1 + n_1 + x_4 \leq \floor{\tau_4}+1 
         \Rightarrow 
         x_4 \leq 1, \label{eq_revisitingExample1_1}\\
         n'_1 + n_1 + x_3 + x_4 \leq \floor{\tau_3}+1
         \xRightarrow{\eqref{eq_revisitingExample1_1}} 
         x_3 \leq 1,\nonumber\\
         n'_1 + n_1 + x_2 + x_3 + x_4 \leq \floor{\tau_2}+1
         \Rightarrow
         x_2 + x_3 + x_4 \leq 3,\nonumber
\end{gather}    
where the second inequality is trivial as $x_3\leq n_3 = 1$.
One can verify that the condition \eqref{R} is always satisfied.
Thus, 
\begin{equation*}
    \I_{\tb{1},\tr{5},\tr{3},\tb{1}}
    = \left\{ \x\in\X_{\tb{1}, \tr{5}, \tr{3}, \tb{1}}\,|\,
        x_4\leq1, 
        x_2 + x_3 + x_4 \leq 3
    \right\}.
\end{equation*}
This results in 36 states, and one can check that $\mathcal{O}_{\tb{1}, \tr{5}, \tr{3}, \tb{1}}$ has the same number of states.
Hence,  $\mathcal{O}_{\tb{1}, \tr{5}, \tr{3}, \tb{1}}=\I_{\tb{1},\tr{5},\tr{3},\tb{1}} $.

Now, we proceed to $(\r,\d) = (1,3)$. 
It holds that
\begin{gather*}
    \overleftarrow{\c}(\overleftarrow{\a}(\y^{1,3})) 
    = (4,0,0,0,0,1,10,2,3)\\
%\end{equation*}
%and hence, 
%\begin{equation*}
    \Rightarrow
    A(\overleftarrow{\c}(\overleftarrow{\a}(\y^{1,3}))) = 20 > {\tau_1} + 1 = 19,
\end{gather*}
implying that $(1,3)\not\in\Psi$.

%Therefore, $\Psi = \{(1,1)\}$ for $\r=1$.
So the only positively invariant set that our approach finds for when the first benchmark type $\r$ equals one is $\I_{\tb{1},\tr{5},\tr{3},\tb{1}}$.
%, the benchmarks of which matches those of $\mathcal{O}_{\tb{1}, \tr{5}, \tr{3}, \tb{1}}$.
%This implies that $\mathcal{O}_{\tb{1}, \tr{5}, \tr{3}, \tb{1}}\subseteq\I_{\tb{1},\tr{5},\tr{3},\tb{1}} $ as $\I_{\tb{1},\tr{5},\tr{3},\tb{1}}$ is positively invariant, yet $\mathcal{O}_{\tb{1}, \tr{5}, \tr{3}, \tb{1}}$ is minimally positively invariant.
% One can show that $\mathcal{O}_{\tb{1}, \tr{4}, \tr{3}, \tb{1}}\subset \I_{\tb{1},5,\tr{3},2}$, and thus, $\I_{\tb{1},5,\tr{3},2}$ can be considered as an approximation for $\mathcal{O}_{\tb{1}, \tr{4}, \tr{3}, \tb{1}}$.
%
%%%
Denote the second minimally positively invariant set in the example by $\mathcal{O}_{\tb{0}, \tr{2}, \tr{5}, \tb{3}}$ with benchmarks
$\tb{\p = 0}$, $\tr{\q = 2}$, $\tr{\qq = 5}$, $\tb{\pp = 3}$.
If we set $\r=\p=0$, we obtain $\Delta(0)=\{3,5\}$.
The pair $(\r, \d) = (0,3)$ results in  
\begin{gather*}
    \overleftarrow{\c}(\overleftarrow{\a}(\y^{0,3})) 
    = (4,0,0,0, 0,1,10,2,3)\\
%\end{equation*}
%Thus,
%\begin{equation*}
    \Rightarrow
     {\tau_1} = 18  < 
     A(\overleftarrow{\c}(\overleftarrow{\a}(\y^{0,3}))) = 20 \leq {\tau_0} + 1 = 43,
\end{gather*}
yielding $(0,3)\in\Psi$.
Correspondingly, $\s = \overleftarrow{a}^{\o}(\y^{0,3}) = 2$ and $\ss = \overleftarrow{c}^{\oo}(\overleftarrow{\a}(\y^{0,3})) = 5$. 
To obtain $\rr$, we start from the state
\begin{gather*}
    \z^{0,3} = 
    (4,0,0,0,0,1,10,2,3)\\
%\end{equation*}
%resulting in
%\begin{equation*}
    \Rightarrow
    %\overrightarrow{\a}^{1}(\z^{0,3}) = 
    \overrightarrow{\c}(\overrightarrow{\a}^{1}(\z^{0,3})) = 
    (3,0,0,0,0,0,10,2,3).
\end{gather*}
Hence, $\rr= \overrightarrow{c}^{\nn}(\overrightarrow{\a}^{2}(\z^{0,3})) = 3$.
Therefore, we obtain the quadruple $(\r,\s,\ss,\rr) = (\tb{0},\tr{2},\tr{5},\tb{3})$, and $\I_{\tb{0}, \tr{2}, \tr{5}, \tb{3}}$ is positively invariant according to Theorem \ref{th1}, the benchmarks of which again matches those of $\O_{\tb{0}, \tr{2}, \tr{5}, \tb{3}}$.
Thus, $\O_{\tb{0}, \tr{2}, \tr{5}, \tb{3}}\subseteq\I_{\tb{0}, \tr{2}, \tr{5}, \tb{3}}$.
Now we investigate Conditions \ref{L} and \ref{R} to determine the form of $\I_{\tb{0}, \tr{2}, \tr{5}, \tb{3}}$.
Index $i$ in both conditions is limited to $i=1$.
Hence, Condition \ref{L} states that for every $\x\in\I_{\tb{0}, \tr{2}, \tr{5}, \tb{3}}$,
\begin{equation*} 
         \sum_{k=1}^{3} n'_k +\! x_1 \leq \floor{\tau_1}+1 
         \Rightarrow 
         x_1 \leq 4,
\end{equation*}    
which is trivial as $x_1\leq n_1 = 4$,
Also, Condition \ref{R} states
\begin{equation*}
        \sum_{k=1}^{4} n'_k + x_1 
        \geq \floor{\tau_1}+1
        \Rightarrow
        x_1 \geq 3.
\end{equation*}
Thus, $x_1 \in\{3,4\}$.
On the other hand, by definition, $x'_4 \in \{0,1\}$.
This results in the same four states as those in $\mathcal{O}_{\tb{0}, \tr{2}, \tr{5}, \tb{3}}$ (Table \ref{tab:state2}).
That is,
\begin{equation*}
    \I_{\tb{0}, \tr{2}, \tr{5}, \tb{3}}
    = \left\{\x\in\X_{\tb{0}, \tr{2}, \tr{5}, \tb{3}}\,|\,x_1 \in\{3,4\}\right\}
    = \mathcal{O}_{\tb{0}, \tr{2}, \tr{5}, \tb{3}}.
\end{equation*}

% Now, we proceed to $(\r,\d) = (0,5)$. 
% It holds that
% \begin{equation*}
%     \overleftarrow{c}(\overleftarrow{a}(y^{0,5})) 
%     = (0,0,0,0,15,1,10,2,3),
% \end{equation*}
% and hence, 
% \begin{equation*}
%     A(\overleftarrow{c}(\overleftarrow{a}(y^{1,1}))) = 31 \leq {\tau_1} + 1 = 44,
% \end{equation*}
% implying $(0,5)\not\in\Psi$.
% Correspondingly, $\s = \overleftarrow{a}^{\o}(y^{0,5}) = 1$ and $\ss = \overleftarrow{c}^{\oo}(\overleftarrow{a}(y^{0,5})) = 6$. 
% Finally, to obtain $\rr$, we start from the state
% \begin{equation*}
%     z^{0,5} = 
%     (0,0,0,0,15,1,10,2,3),
% \end{equation*}
% to obtain 
% \begin{equation*}
%     \overrightarrow{a}^{1}(z^{0,5}) = 
%     \overrightarrow{c}(\overrightarrow{a}^{1}(z^{0,5})) = 
%     (0,0,0,0,15,1,10,2,3).
% \end{equation*}
% Hence, $\rr= \overrightarrow{c}^{\nn}(\overrightarrow{a}^{2}(z^{0,5})) = 5$.
% Therefore, we obtain the quadruple $(\r,\s,\rr,\ss) = ({0},{1},{6},5)$.
% Thus, the set $\I_{\tb{0}, \tr{1}, {6}, 4}$ is positively invariant according to Theorem \ref{th1}. 
The other pair, i.e., $(\r, \d) = (0,5)$, results in the invariant set $\I_{\tb{0}, \tr{1}, \tr{6}, \tb{5}}$.
This set is a singleton that comprises $\x^*$ defined in the example. 
Hence, in this case, our approach results in the equilibrium state of the dynamics.

The cases with $\r\geq2$ do not result in a pair $(\r,\delta)\in\Psi$.
Hence, $\Psi=\{(1,1),(0,3),(0,5)\}$, and the only positively invariant sets that Theorem \ref{th1} puts forward are $\O_{\tb{1}, \tr{5}, \tr{3}, \tb{1}}$, $\O_{\tb{0}, \tr{2}, \tr{5}, \tb{3}}$, and $\{\x^*\}$, all of which coincide with the limit sets of the population dynamics. 
The sets $\I_{\r,\s,\ss,\rr}$, thus, can be minimal. 
The conditions under which they become minimal remain concealed.
% A positively invariant set is \emph{minimal} if none of its proper subsets are positively invariant.

%%%%%%%%%%%%%%%%%%%%%%%%%%%%%%%%%%%
%%%%%%%%%%%%%%%%%%%%%%%%%%%%%%%%%%%%%%%%%%%%%%%%%%%%%%%%%%%%%%%%%%%%%%%%%%%
\section{Stability analysis}    \label{sec_stability}
Consider the quadruple $(\p,\q,\qq,\pp)\in\Omega$ such that the set $\I_{\p,\q,\qq,\pp}$ defined in Section \ref{sec:example} is positively invariant. 
Note that $(\p,\q,\qq,\pp)$ is not necessarily the same as $(\r,\rr,\ss,\r)$ defined in Section \ref{sec_positivelyInvariantSets}. 
We investigate the notion of stability for the positively invariant set $\I_{\p,\q,\qq,\pp}$.
That is, we examine whether the solution trajectory in the set $\I_{\p,\q,\qq,\pp}$ remains close to the set under small perturbations. 
Unlike in continuous dynamics, here, ``small perturbations'' and ``closeness'' are explicitly quantified to one since the smallest nonzero difference between two states is one, i.e., $\min_{\x,\y\in\X,\x\neq\y}\|\x-\y\| = 1$, where $\|\cdot\|$ is the $L^1$ norm.
The notion of stability, therefore, simplifies to requiring any solution trajectory that is ``adjacent'' to the positively invariant set to remain adjacent or enter the set. 
We provide a formal argument in the following. 

Given a state $\x\in\X$ and set $\O\subseteq\X$, let $\|\x\|_\O$ denote the distant between $\x$ and $\O$, defined by 
$$
    \|\x\|_\O\db\inf_{\y\in\O}\|\x-\y\|.
$$
A natural extension of the notion of equilibrium (Lyapunov) stability yields the following definition of \emph{(positively invariant) set stability}: A positively invariant set $\O\subseteq\X$ is stable if for any $\epsilon>0$, there exists some $\delta>0$ such that if $\|\x(0)\|_\O <\delta$ then $\|\x(t)\|_\O <\epsilon$ for all $t\in\mathbb{Z}_{\geq0}$.
However, since the state space is discrete in our case, the norm $\|\cdot\|$ is confined to non-negative integers.
Thus, we can assume $\epsilon$ and $\delta$ are natural numbers. 
On the other hand, $\epsilon$ cannot be one; otherwise, the definition requires the solution trajectory to always stay in the stable set, even at time zero.
Neither does $\delta$ equal one; otherwise, the solution trajectory starts from the positively invariant set, and hence, will always remain there, which does not allow the investigation of the dynamics under small perturbations.   
Hence, the minimum of $\epsilon$ and $\delta$ is two. 
Moreover, if the stability condition is satisfied for $\epsilon = 2$, then it is also satisfied for $\epsilon > 2$. 
We, therefore, obtain the following definition.
\begin{definition}
    A positively invariant set $\O\subseteq\X$ is stable if under any activation sequence,
    \begin{equation*}
        \|\x(0)\|_\O  \leq 1 
        \Rightarrow
        \|\x(t)\|_\O  \leq 1 \qquad \forall t\in\mathbb{Z}_{\geq0}.
    \end{equation*}
\end{definition}
The following result follows by induction.
\begin{lemma}   \label{lem_stabilityDefinition}
    A positively invariant set $\O\subseteq\X$ is stable if and only if under any activation sequence,
    \begin{equation}        \label{stability_lemma1}
        \|\x(0)\|_\O  = 1 
        \Rightarrow
        \|\x(1)\|_\O  \leq 1.
    \end{equation}
\end{lemma}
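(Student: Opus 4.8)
The plan is to prove the two implications separately, with the nontrivial (``if'') direction carried out by induction on $t$, and to recognize that the whole lemma rests on the \emph{time-invariance} (autonomy) of the population dynamics together with the positive invariance of $\O$.

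The forward direction (stability $\Rightarrow$ \eqref{stability_lemma1}) is immediate by specialization. If $\O$ is stable and $\|\x(0)\|_\O = 1$, then in particular $\|\x(0)\|_\O \leq 1$, so by the definition of stability $\|\x(t)\|_\O \leq 1$ for every $t\in\mathbb{Z}_{\geq0}$, and reading off $t=1$ gives $\|\x(1)\|_\O \leq 1$.

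For the reverse direction, assume \eqref{stability_lemma1} holds under every activation sequence. Fix an arbitrary activation sequence and an initial condition with $\|\x(0)\|_\O \leq 1$; I would show $\|\x(t)\|_\O \leq 1$ for all $t$ by induction. The base case $t=0$ is the hypothesis. For the inductive step, suppose $\|\x(t)\|_\O \leq 1$. Since $\X\subseteq\mathbb{Z}^{b+b'}$ makes $\|\cdot\|_\O$ integer-valued, this distance is either $0$ or $1$. If $\|\x(t)\|_\O = 0$, i.e. $\x(t)\in\O$, then positive invariance of $\O$ forces $\x(t+1)\in\O$, whence $\|\x(t+1)\|_\O = 0 \leq 1$. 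If $\|\x(t)\|_\O = 1$, then I would invoke time-invariance: update rules \eqref{eq:coor} and \eqref{eq:anti} depend only on the current state and the active agent, never on absolute time, so the trajectory that takes $\x(t)$ as a fresh initial state and follows the tail of the activation sequence is itself a legitimate trajectory of the dynamics starting at distance one from $\O$. Applying hypothesis \eqref{stability_lemma1} to that trajectory yields $\|\x(t+1)\|_\O \leq 1$, closing the induction.

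The one step I would be most careful about is precisely this autonomy argument in the distance-one case. The one-step hypothesis is quantified over \emph{all} activation sequences and \emph{all} states at distance one from $\O$, and it is exactly this uniformity that allows re-indexing time to treat the intermediate state $\x(t)$ as an initial condition, thereby propagating a single-step bound to all times. Everything else is routine case bookkeeping over the two admissible values $0$ and $1$ of $\|\cdot\|_\O$.
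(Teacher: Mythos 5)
Your proposal is correct and follows essentially the same route as the paper, which simply states that the lemma ``follows by induction''; your write-up supplies exactly the intended induction, with the distance-zero case handled by positive invariance and the distance-one case handled by re-applying the one-step hypothesis via the autonomy of the dynamics and the arbitrariness of the activation sequence. No gaps.
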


%The stability condition for an invariant set turns out to require the inequalities in the definition of the set to be strict.??
So for $\I_{\p,\q,\qq,\pp}$ to be stable, it is necessary and sufficient to investigate solution trajectories that start from adjacent states. 
Let $\bm{1}_i$ be the all-zero vector of length $b+b'$, whose $i^{\text{th}}$ entry is one.
The initial condition $\x(0)$, adjacent to $\I_{\p,\q,\qq,\pp}$, is in the form of $\z\pm\bm{1}_s$ for some state $\z\in\I_{\p,\q,\qq,\pp}$ and type $s$. 
Then the state at the next time step $\x(1)$ will be in the form of $\z\pm\bm{1}_s\pm\bm{1}_v$, where $v$ denotes the type of the initially active agent. 
Stability of $\I_{\p,\q,\qq,\pp}$ requires and implies $\z\pm\bm{1}_s\pm\bm{1}_v$ to be adjacent to $\I_{\p,\q,\qq,\pp}$, which we investigate in Lemmas \ref{lem_stability4} to \ref{lem_stability3}. 
Then we proceed to Proposition \ref{th_stability} as the main result of this section, followed by Theorem \ref{cor_stability}.

Note that a state adjacent to $\I_{\p,\q,\qq,\pp}$, does not belong to it, and hence, violates one of the conditions in the definition of $\I_{\p,\q,\qq,\pp}$, two of which are \eqref{L} and \eqref{R}.
So given the state $\x\in\I_{\p,\q,\qq,\pp}$, we define the sets $\L(\x),\R(\x)\subseteq\{\p+1,\ldots,\q-1\}$ as those indices $i$ that respectively turn inequalities \eqref{L} and \eqref{R} into equality.
These two sets turn out to be key in determining the stability of $\I_{\p,\q,\qq,\pp}$.
We show in Lemma \ref{lem_stability0}, how $\L$ and $\R$ are related.

In Lemmas \ref{lem_stability0} to \ref{lem_stability3}, we consider an arbitrary set $\I_{\p,\q,\qq,\pp}$, where $(\p,\q,\qq,\pp)\in\Omega$, and simplify the notation to $\I$.
We also use $\L = \L(\z)$ and $\R = \R(\z)$ in the proofs.
Moreover, to simplify the results, we assume that the tempers are apart by a distance of at least one, i.e., 
\begin{equation}    \label{assumption_tauOrdering}
    \floor{\tau_i}<\floor{\tau_{i-1}}   \quad \forall i\in\{2,\ldots,b\}.
\end{equation}

\begin{lemma} \label{lem_stability0}
    %Consider the set $\I_{\p,\q,\qq,\pp}$, where $(\p,\q,\qq,\pp)\in\Omega$.
    Given the state $\z\in\I$, if $\R(\z),\L(\z)\neq \emptyset$, then
    $\max\R(\z) \leq \min\L(\z)$.
\end{lemma}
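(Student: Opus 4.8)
The plan is to argue by contradiction. Suppose $\max\R(\z) > \min\L(\z)$, and set $j \db \max\R(\z)$ and $i \db \min\L(\z)$, so that $i,j\in\{\p+1,\ldots,\q-1\}$ with $j>i$. Since $i\in\L(\z)$ turns \eqref{L} into an equality and $j\in\R(\z)$ turns \eqref{R} into an equality, I would first record the two defining equalities
\begin{align*}
    \sum_{k=1}^{\pp} n'_k + \sum_{k=1}^{\p} n_k + \sum_{k=i}^{\q-1} x_k &= \floor{\tau_i}+1, \\
    \sum_{k=1}^{\qq-1} n'_k + \sum_{k=1}^{\p} n_k + \sum_{k=\p+1}^{j} x_k + \sum_{k=j+1}^{\q-1} n_k &= \floor{\tau_j}+1.
\end{align*}

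Next I subtract the first equality from the second. The common block $\sum_{k=1}^{\p}n_k$ cancels; the coordinator terms combine into $\sum_{k=\pp+1}^{\qq-1}n'_k\geq 0$ (this sum is well-defined and nonnegative because $\qq\geq\pp+1$); and the anticoordinator terms are handled through the key splitting $\sum_{k=i}^{\q-1}x_k=\sum_{k=i}^{j}x_k+\sum_{k=j+1}^{\q-1}x_k$, which is legitimate since $i\leq j\leq\q-1$. After this split, and using $\p+1\leq i$, the remaining $x$- and $n$-terms collapse to $\sum_{k=\p+1}^{i-1}x_k+\sum_{k=j+1}^{\q-1}(n_k-x_k)$, a sum of nonnegative quantities because $0\leq x_k\leq n_k$. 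Hence $\floor{\tau_j}-\floor{\tau_i}\geq 0$, i.e.\ $\floor{\tau_j}\geq\floor{\tau_i}$.

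Finally I extract the contradiction from the temper ordering. Because the anticoordinator tempers obey $\tau_1>\cdots>\tau_b$, the assumption $j>i$ gives $\tau_j<\tau_i$, and iterating the separation hypothesis \eqref{assumption_tauOrdering} strengthens this to the strict floor inequality $\floor{\tau_j}<\floor{\tau_i}$, which contradicts $\floor{\tau_j}\geq\floor{\tau_i}$. Therefore $\max\R(\z)\leq\min\L(\z)$, as claimed.

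The arithmetic is routine once the two equalities are in place; the step I would verify most carefully is the index bookkeeping in the telescoping subtraction, namely confirming that the chain $\p+1\leq i<j\leq\q-1$ makes every partial sum well-defined and that the cancellation genuinely leaves only the nonnegative blocks $\sum_{k=\pp+1}^{\qq-1}n'_k$, $\sum_{k=\p+1}^{i-1}x_k$, and $\sum_{k=j+1}^{\q-1}(n_k-x_k)$ (each of which may be empty). Everything else is a direct consequence of the definitions of $\L(\z)$ and $\R(\z)$ together with the temper separation.
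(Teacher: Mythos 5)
Your proof is correct and rests on exactly the same ingredients as the paper's: comparing the left-hand side of \eqref{L} at $i$ with that of \eqref{R} at $j>i$ via $\sum_{k=1}^{\pp}n'_k\le\sum_{k=1}^{\qq-1}n'_k$ and $z_k\le n_k$, then invoking the floor separation \eqref{assumption_tauOrdering}. The paper merely organizes it directly (showing $\L(\z)\cap(\p,r)=\emptyset$ for any $r\in\R(\z)$ by bounding the \eqref{L}-sum above by $\floor{\tau_r}+1<\floor{\tau_i}+1$) rather than by contradiction on the extremal pair, but the computation is identical.
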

\begin{proof}
    Let $r\in\R$.
    Then
\begin{equation*}
    \sum_{k=1}^{\qq-1} n'_k +\! \sum_{k=1}^{\p} n_k +\!\!\!\! \sum_{k=\p+1}^{r}\!\!\!\! z_k + \!\!\!\sum_{k=r+1}^{\q-1}\!\! n_k
    = \floor{\tau_r}+1.
\end{equation*}
Hence, for $i\in\{\p+1,\ldots, r-1\}$, 
\begin{equation*}
    \sum_{k=1}^{\pp} n'_k +\! \sum_{k=1}^{\p} n_k +\! \sum_{k=i}^{\q-1} z_k
    \leq \floor{\tau_r}+1
    \overset{\eqref{assumption_tauOrdering}}{<} \floor{\tau_{i}}+1.
\end{equation*}
Thus, $\L\cap(\p,r) = \emptyset$.
By setting $r=\max\R$, we obtain $\L\cap(\p,\max\R) = \emptyset$.
Because of $\L\subseteq(\p,\q)$, it follows that $\L\cap(0,\max\R) = \emptyset$.
This completes the proof.
\end{proof}
\begin{lemma}   \label{lem_stability4}
    Consider the state $\z\in\I$, where there exist anticoordinating types $s$ and $v$, satisfying $\p<s<v<\q$, $z_s< n_s$, and $z_v> 0$.
    Define the state $\x = \z + \bm{1}_s - \bm{1}_v$.
    Then $\x\in\I$.
\end{lemma}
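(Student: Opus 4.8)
The plan is to verify directly that the perturbed state $\x = \z + \bm{1}_s - \bm{1}_v$ satisfies every defining condition of $\I = \I_{\p,\q,\qq,\pp}$: membership in $\X_{\p,\q,\qq,\pp}$ together with \eqref{L} and \eqref{R} for all $i\in\{\p+1,\ldots,\q-1\}$. Since $s$ and $v$ are both wandering anticoordinating types (as $\p<s<v<\q$), modifying $z_s$ and $z_v$ leaves all $\A$-fixed and $\B$-fixed coordinates untouched; moreover $z_s<n_s$ yields $0\le x_s=z_s+1\le n_s$ and $z_v>0$ yields $0\le x_v=z_v-1\le n_v$. Hence $\x\in\X_{\p,\q,\qq,\pp}$. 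The total count is preserved, $A(\x)=A(\z)$, so this is a token-moving perturbation, though that fact is not needed below.

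The heart of the proof is a monotonicity observation: the only $\x$-dependent term of \eqref{L} at index $i$ is $\sum_{k=i}^{\q-1}x_k$, and the only $\x$-dependent term of \eqref{R} at index $i$ is $\sum_{k=\p+1}^{i}x_k$; all other summands use the fixed counts $n_k,n'_k$ and the fixed benchmarks $\p,\q,\qq,\pp$. I would first treat \eqref{L} by splitting on the position of $i$ relative to $s<v$. For $i\le s$ both indices lie in $\{i,\ldots,\q-1\}$, so the $+1$ at type $s$ and the $-1$ at type $v$ cancel; for $s<i\le v$ only $v$ remains in range, so the sum decreases by one; and for $i>v$ neither contributes. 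In every case the left-hand side of \eqref{L} does not increase, so it continues to hold for $\x$ because it held for $\z\in\I$.

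The symmetric case split settles \eqref{R}, whose relevant sum runs over $\{\p+1,\ldots,i\}$: for $i<s$ neither index is included (no change), for $s\le i<v$ only $s$ is included (the sum gains one), and for $i\ge v$ both are included (the $+1$ and $-1$ cancel). Thus the left-hand side of \eqref{R} never decreases and is inherited from $\z$. Combined with the $\X$-membership, this gives $\x\in\I$.

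I anticipate no real obstacle; the only care needed is the bookkeeping of which of $s,v$ falls inside each summation range, and this is exactly where the hypothesis $s<v$ (equivalently $\tau_s>\tau_v$) enters: adding an $\A$-player to the higher-temper type $s$ while removing one from the lower-temper type $v$ relaxes the ``left'' constraints \eqref{L} and tightens the ``right'' constraints \eqref{R}, both in the direction that preserves membership. Note that the argument does not use the spacing assumption \eqref{assumption_tauOrdering}.
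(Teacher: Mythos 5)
Your proof is correct and takes essentially the same approach as the paper's: direct verification that $\x$ remains in $\X_{\p,\q,\qq,\pp}$ and that the perturbation $+\bm{1}_s-\bm{1}_v$ with $s<v$ can only decrease the left-hand side of \eqref{L} and increase the left-hand side of \eqref{R}, so both conditions are inherited from $\z$. If anything, your explicit case split on where $i$ sits relative to $s<v$ is slightly more careful than the paper's one-line summation chain (which it writes out for \eqref{L} and dispatches \eqref{R} by symmetry).
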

\begin{proof}
    The anticoordinating part of $\x$ takes the form
    \begin{equation*}\scalebox{.95}{\text{$
        (\underbrace{n_1,\ldots,n_{\p},z_{\p+1},\ldots,
        z_s+1, \ldots, z_v-1,
        \ldots,z_{\q-1}, 0,\ldots,0}_{\text{anticoordinating}})$}}.
    \end{equation*}  
    Clearly $\x\in\X_{\p,\q,\qq,\pp}$.
    For all $i\in\{\p+1,\ldots,\q-1\}$, 
\begin{align*} %\label{5}
    &\sum_{k=1}^{\pp} n'_k +\! \sum_{k=1}^{\p} n_k +\! \sum_{k=i}^{\q-1} x_k \nonumber\\
    \leq &\sum_{k=1}^{\pp} n'_k +\!\! \sum_{k=1}^{\p} n_k +\! \sum_{\substack{k=i;\\ k\neq s,v}}^{\q-1}\!\! x_k + (z_s + 1) + (z_v-1)  \nonumber\\
    =& 
    \sum_{k=1}^{\pp} n'_k +\! \sum_{k=1}^{\p} n_k +\! \sum_{k=i}^{\q-1}z_k, \nonumber \\%\label{stabilityTheorem_eq5}
    %\overset{\z\in\I}{\leq}&
    \leq&
    \floor{\tau_i}+1.
\end{align*}
where the last inequality follows $\z\in\I$.
% \begin{equation*}
%     \sum_{k=1}^{\pp} n'_k +\! \sum_{k=1}^{\p} n_k +\! \sum_{k=i}^{\q-1} x_k
%     \leq
%     \floor{\tau_i}+1.
% \end{equation*}
So $\x$ satisfies \eqref{L}. 
Similarly, $\x$ satisfies \eqref{R}, leading to the proof. 
% Similarly, the following holds for all $i\in\{\p+1,\ldots,\q-1\}$:
% \begin{equation*}
%     \sum_{k=1}^{\qq -1} n'_k +\! \sum_{k=1}^p n_k +\!\!\!\! \sum_{k=p+1}^{i}\!\!\!\! x_k(1) + \!\!\!\sum_{k=i+1}^{q-1}\!\! n_k
%             \geq \floor{\tau_i}+1.
% \end{equation*}
\end{proof}
\begin{lemma} \label{lem_stability}
    Consider the state $\z\in\I$, where there exist anticoordinating types $v$ and $s$ satisfying $\p<v<s<\q$, $z_s< n_s$, and $z_v>0$.
    Define the state $\x = \z + \bm{1}_s - \bm{1}_v$.
    Then 
    \begin{equation*}
        |\R(\z)\cap[v,s)| \times |\L(\z)\cap(v,s]| = 0
        \iff
        \|\x\|_{\I} \leq 1,
    \end{equation*}
    with equality when exactly one of $\R(\z)\cap[v,s)$ or $\L(\z)\cap(v,s]$ is empty.
\end{lemma}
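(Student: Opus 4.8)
The plan is to localize the whole question to the wandering anticoordinators. First I would note that, because $s$ and $v$ are wandering anticoordinating types with $\p<v<s<\q$, the hypotheses $z_s<n_s$ and $z_v>0$ guarantee $\x=\z+\bm{1}_s-\bm{1}_v\in\X_{\p,\q,\qq,\pp}$. Crucially, the only constraints distinguishing $\I$ from $\X_{\p,\q,\qq,\pp}$ are \eqref{L} and \eqref{R}, and these involve solely the wandering anticoordinators $x_{\p+1},\dots,x_{\q-1}$; the coordinator and fixed coordinates of $\x$ already coincide with those of $\z\in\I$. Consequently $\|\x\|_{\I}$ is insensitive to those coordinates, and a closest point of $\I$ can be sought among single wandering-anticoordinator moves $\x\pm\bm{1}_w$.

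Next I would track the two families through the tail sums $S_i\db\sum_{k=i}^{\q-1}x_k$ controlled by \eqref{L} and the head sums $P_i\db\sum_{k=\p+1}^{i}x_k$ controlled by \eqref{R}. Passing from $\z$ to $\x$ raises $S_i$ by one exactly for $i\in(v,s]$ and lowers $P_i$ by one exactly for $i\in[v,s)$, leaving all other $S_i,P_i$ unchanged. Since \eqref{L} (resp. \eqref{R}) is tight at $i$ for $\z$ precisely when $i\in\L(\z)$ (resp. $i\in\R(\z)$), this yields the dictionary: $\x$ violates \eqref{L} iff $\L(\z)\cap(v,s]\neq\emptyset$, and $\x$ violates \eqref{R} iff $\R(\z)\cap[v,s)\neq\emptyset$. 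Hence the product vanishes iff $\x$ violates at most one of the two families, which is the combinatorial statement I must equate with $\|\x\|_{\I}\le1$.

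For the ``both violated'' case I would prove $\|\x\|_{\I}\ge2$. Fix violated indices $i_L\in\L(\z)\cap(v,s]$ and $i_R\in\R(\z)\cap[v,s)$; then $S_{i_L}(\x)$ exceeds its \eqref{L} bound by one while $P_{i_R}(\x)$ falls one short of its \eqref{R} bound. Any single decrease $\x-\bm{1}_w$ can only keep or lower $P_{i_R}$, so \eqref{R} stays violated at $i_R$; any single increase $\x+\bm{1}_w$ can only keep or raise $S_{i_L}$, so \eqref{L} stays violated at $i_L$; a coordinator move changes neither sum. Thus no neighbour of $\x$ lies in $\I$, and since $\x\notin\I$ we get $\|\x\|_{\I}\ge2$. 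When neither family is violated, $\x\in\I$ and $\|\x\|_{\I}=0$. It remains to treat a single violation, say only \eqref{L} (the only-\eqref{R} case is symmetric), where I would propose the neighbour $\y=\z-\bm{1}_v=\x-\bm{1}_s$: lowering the tails preserves \eqref{L} automatically, and $\y\in\X_{\p,\q,\qq,\pp}$ since $z_v>0$.

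The main obstacle is that $\y=\z-\bm{1}_v$ could newly break \eqref{R}, which happens exactly when some $i\in\R(\z)$ has $i\ge v$; this is where Lemma \ref{lem_stability0} and the strict hypotheses enter. If $\R(\z)=\emptyset$ the neighbour works immediately; otherwise, since \eqref{L} is violated we have $\L(\z)\neq\emptyset$, so Lemma \ref{lem_stability0} forces $\max\R(\z)\le\min\L(\z)\le s$, while the only-\eqref{L} hypothesis gives $\R(\z)\cap[v,s)=\emptyset$, leaving $i=s$ with $s\in\R(\z)\cap\L(\z)$ as the sole candidate. I would rule this out: subtracting the tightness of \eqref{R} and \eqref{L} at $i=s$ gives $\sum_{k=\pp+1}^{\qq-1}n'_k+\sum_{k=\p+1}^{s-1}z_k+\sum_{k=s+1}^{\q-1}(n_k-z_k)=0$, a sum of non-negative terms whose middle term is at least $z_v>0$, a contradiction. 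The symmetric obstruction for $\y=\z+\bm{1}_s$ in the only-\eqref{R} case is eliminated identically: $v\in\R(\z)\cap\L(\z)$ would force $\sum_{k=\pp+1}^{\qq-1}n'_k+\sum_{k=\p+1}^{v-1}z_k+\sum_{k=v+1}^{\q-1}(n_k-z_k)=0$ with the last sum at least $n_s-z_s>0$. With both edge cases excluded, the proposed neighbours lie in $\I$, so $\|\x\|_{\I}\le1$; and as $\x\notin\I$ under a single violation, $\|\x\|_{\I}=1$, yielding the asserted equality precisely when exactly one of the two sets is empty.
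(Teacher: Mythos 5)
Your proof is correct and follows essentially the same route as the paper's: the same translation of violations of \eqref{L} and \eqref{R} at $\x$ into the sets $\L(\z)\cap(v,s]$ and $\R(\z)\cap[v,s)$, the same one-move impossibility argument when both are nonempty, the same candidate neighbours $\z-\bm{1}_v$ and $\z+\bm{1}_s$ in the single-violation case, and the same appeal to Lemma \ref{lem_stability0}. Your explicit elimination of the boundary case $s\in\R(\z)\cap\L(\z)$ (resp.\ $v\in\R(\z)\cap\L(\z)$) via the subtraction identity, using $z_v>0$ (resp.\ $z_s<n_s$), is a welcome extra step that the paper's proof passes over silently when it concludes $\R\cap[v,\q)=\emptyset$ from Lemma \ref{lem_stability0}.
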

\begin{proof}
    The anticoordinating part of $\x$ is of the form
    \begin{equation*}\scalebox{.95}{\text{$
        (\underbrace{n_1,\ldots,n_{\p},z_{\p+1},\ldots,
        z_v-1, \ldots, z_s+1,
        \ldots,z_{\q-1}, 0,\ldots,0}_{\text{anticoordinating}})$}}.
    \end{equation*} 
    The equation $\|\x\|_{\I} =1$ holds if and only if there exits a state $\y\in\I$, such that $\y=\x-\bm{1}_u$ or $\y=\x+\bm{1}_u$ for some $u\in\{1,\ldots,b+b'\}$.
    Now, for $l\in\L$,
    % If $l\geq s$, then if $y_u = z_u+1$, $u< l$. 
    % If $v<l< s$, then $y_u = z_u-1 $ and $ u\geq l $.
    % If $l\leq v$, then if $y_u = z_u + 1$, $u<l$.
    % So
    we observe that if $l\in(v,s]$, then $y_u = x_u-1$ and $ u\geq l $.
    %Also, if $l\not\in(v,s]$, then if $y_u = x_u+1$, $u< l$. 
    % 
    Similarly, for $r\in\R$,
    % If $r\geq s$, then if $y_u = z_u-1$, $u> r$. 
    % If $v<r< s$, then $y_u = z_u+1 $ and $ u\leq r $.
    % If $r\leq v$, then if $y_u = z_u - 1$, $u>r$.
    % So
    we observe that if $r\in[v,s)$, then $y_u = x_u+1 $ and $ u\leq r $.
    %Also, if $r\not\in[v,s)$, then if $y_u = x_u-1$, $u>r$. 
    %
    So in the case where both $\L\cap(v,s]$ and $\R\cap[v,s)$ are nonempty, the former requires $y_u=x_u-1$ and the latter requires $y_u=x_u+1$, which is impossible, proving $\|\x\|_{\I} \geq 2$.
    On the other hand, if $\|\x\|_{\I} \geq 2$, then for every state $\y$ satisfying $\|\y-\x\|=1$, it holds that $\y\not\in\I$.
    Hence, by considering $\y = \x - \bm{1}_s$, since $\y\in\X_{\p,\q,\qq,\pp}$ and $\y$ satisfies \eqref{L}, we must have that $\y$ violates \eqref{R}.
    Thus, $\R\cap[v,\q)\neq\emptyset$.
    Similarly, by considering $\y = \x + \bm{1}_v$ we obtain $\L\cap(\p,s]\neq\emptyset$.
    Hence, in view of Lemma \ref{lem_stability0}, both $\L\cap(v,s]$ and $\R\cap[v,s)$ are nonempty.
    Therefore, we proved
    \begin{equation}    \label{lem_stability_1}
        \R\cap[v,s) \neq \emptyset,\L\cap(v,s] \neq \emptyset
        \iff
        \|\x\|_{\I} \geq 2.
    \end{equation}
    
    Now consider the case where $\R\cap[v,s) = \emptyset$ but $\L\cap(v,s] \neq \emptyset$.
    Then in view of Lemma \ref{lem_stability0}, $\R\cap[v,\q)=\emptyset$.
    Now let $\y=\x-\bm{1}_s$.
    Then $\y = \z -\bm{1}_v$, implying that $\y$ satisfies \eqref{L} for $i\in\{\p+1,\ldots,\q-1\}$.
    Moreover, because of $\R\cap[v,\q)=\emptyset$, $\y$ also satisfies \eqref{R} for $i\in\{\p+1,\ldots,\q-1\}$. 
    Therefore, $\y\in\I$.
    On the other hand, $\|\y-\x\| = 1$.
    Hence, $\|\x\|_{\I}  = 1$.
    Namely,
    \begin{equation}    \label{lem_stability_2}
        \R\cap[v,s) = \emptyset, \L\cap(v,s] \neq \emptyset
        \Rightarrow
        \|\x\|_{\I}  = 1.
    \end{equation}
    Similarly, the case with $\R\cap[v,s) = \emptyset, \L\cap(v,s] \neq \emptyset$ can be shown to yield the same result:
    \begin{equation}    \label{lem_stability_3}
        \R\cap[v,s) \neq \emptyset, \L\cap(v,s] = \emptyset
        \Rightarrow
        \|\x\|_{\I}  = 1.
    \end{equation}
    Now if $\|\x\|_{\I}  = 1$, then $\x\not\in\I$, implying that at least one of $\R\cap[v,s)$ and $\L\cap(v,s]$ are nonempty. 
    However, both may not be nonempty as then $\|\x\|_{\I} \neq1$ according to \eqref{lem_stability_1}. 
    Thus, we proved the reverses of \eqref{lem_stability_2} and \eqref{lem_stability_3}, yielding
    \begin{align}   
        &(\R\cap[v,s) = \emptyset, \L\cap(v,s] \neq \emptyset) \nonumber\\
        &\text{ or }
        (\R\cap[v,s) \neq \emptyset, \L\cap(v,s] = \emptyset)
        \iff
        \|\x\|_{\I}  = 1.  \label{lem_stability_4}
    \end{align}
    
    Finally, it is straightforward to show that 
    \begin{equation}    \label{lem_stability_5}
        \R\cap[v,s) = \L\cap(v,s] = \emptyset
        \iff
        x\in\I.
    \end{equation}
    The result follows \eqref{lem_stability_1}, \eqref{lem_stability_4}, and \eqref{lem_stability_5}.
\end{proof}

\begin{lemma} \label{lem_stability2}
    Consider the state $\z\in\I$, where there exist anticoordinating types $s$ and $v$, satisfying $\p<s,v<\q$, $z_s < n_s$, and $z_v < n_v$.
    Define the state $\x = \z + \bm{1}_s + \bm{1}_v$.
    Then 
    \begin{equation*}
        \L(\z)\cap(\p,\min\{v,s\}] = \emptyset
        \iff \|\x\|_{\I} \leq 1.
    \end{equation*}
\end{lemma}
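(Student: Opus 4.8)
The plan is to reduce the entire claim to a statement about condition \eqref{L} alone. Since $\x=\z+\bm{1}_s+\bm{1}_v\ge\z$ componentwise, and $z_s<n_s$, $z_v<n_v$ guarantee $\x\in\X_{\p,\q,\qq,\pp}$, and because \eqref{R} lower‑bounds a sum that is nondecreasing in the state, $\z\in\I$ forces $\x$ to satisfy \eqref{R} too. Hence every membership question in a unit neighbourhood of $\x$ is governed solely by \eqref{L}. Writing $m=\min\{s,v\}$ and $M=\max\{s,v\}$, I would first record how adding $\bm{1}_s+\bm{1}_v$ perturbs the left‑hand side of \eqref{L}: at index $i$ the tail sum $\sum_{k=i}^{\q-1}x_k$ exceeds $\sum_{k=i}^{\q-1}z_k$ by $2$ when $i\le m$, by $1$ when $m<i\le M$, and by $0$ when $i>M$. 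Thus the only indices where \eqref{L} can be overshot by two units are those $i\le m$ that are already tight for $\z$, i.e. the members of $\L(\z)\cap(\p,m]$.

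For the direction $\L(\z)\cap(\p,m]=\emptyset\Rightarrow\|\x\|_{\I}\le1$, I would exhibit an explicit neighbour of $\x$ inside $\I$. Take $\y=\x-\bm{1}_M=\z+\bm{1}_m$. Clearly $\y\in\X_{\p,\q,\qq,\pp}$, and since $\y\ge\z$ the state $\y$ inherits \eqref{R} from $\z$. For \eqref{L}, the left‑hand side of $\y$ exceeds that of $\z$ by exactly one at indices $i\le m$ and is unchanged otherwise, so a violation could occur only at some $i\in(\p,m]$; but every such $i$ lies outside $\L(\z)$ by hypothesis, hence had integer slack at least one for $\z$, which absorbs the unit increase. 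Therefore $\y\in\I$, and as $\|\x-\y\|=1$ we conclude $\|\x\|_{\I}\le1$.

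For the reverse implication I would argue the contrapositive. Assume $l\in\L(\z)\cap(\p,m]$. Then $l\le m$ gives both $s\ge l$ and $v\ge l$, so the left‑hand side of \eqref{L} at $l$ for $\x$ equals $\floor{\tau_l}+1+2$. Now any $\y$ with $\|\y-\x\|\le1$ differs from $\x$ by at most one unit in a single coordinate, and the tail sum $\sum_{k=l}^{\q-1}(\cdot)_k$ entering \eqref{L} at $l$ changes by at most one, and only when the perturbed coordinate is an anticoordinating type $\ge l$ (coordinating types and anticoordinating types indexed below $l$ leave it untouched). Consequently the left‑hand side of \eqref{L} at $l$ for $\y$ is at least $\floor{\tau_l}+2>\floor{\tau_l}+1$, so $\y\notin\I$. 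Since no state within distance one of $\x$ lies in $\I$, we obtain $\|\x\|_{\I}\ge2$.

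The only genuine obstacle is this last distance‑one argument: one must verify that a single unit perturbation cannot repair a ``$+2$'' violation of \eqref{L} at $l$, which hinges precisely on the observation that such a perturbation moves the relevant tail sum by at most one, and that perturbations at coordinating types or at anticoordinating types indexed below $l$ do not affect it at all. Everything else is bookkeeping, parallel to Lemma \ref{lem_stability} but simpler here, because both added players push \eqref{L} in the same direction while leaving \eqref{R} slack, so only one of the two defining inequalities is ever in danger.
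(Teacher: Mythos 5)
Your proof is correct and follows essentially the same route as the paper: for one direction you exhibit the neighbour $\y=\x-\bm{1}_{\max\{s,v\}}=\z+\bm{1}_{\min\{s,v\}}$ and check it lies in $\I$, and for the other you observe that a tight index $l\le\min\{s,v\}$ is overshot by two in \eqref{L}, which no single-unit perturbation can repair. Your bookkeeping of the tail-sum increments is in fact slightly more explicit than the paper's own argument, but the underlying idea is identical.
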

\begin{proof}
    We only prove the case with $v<s$;
    the case with $v\geq s$ can be proven similarly.
    The anticoordinating part of $\x$ is of the form
    \begin{equation*}\scalebox{.95}{\text{$
        (\underbrace{n_1,\ldots,n_{\p},z_{\p+1},\ldots,
        z_v+1, \ldots, z_s+1,
        \ldots,z_{\q-1}, 0,\ldots,0}_{\text{anticoordinating}})$}}.
    \end{equation*}  
    %Let $\L = \L(\z)$.
    Clearly, $\x$ satisfies \eqref{R}. 
    If $\L\cap[\p+1,v] = \emptyset$, then the state $\y=\x-\bm{1}_s$ satisfies both \eqref{L} and \eqref{R}, and hence, belongs to $\I$ and has a distance of $1$ from $\x$.
    Thus, $\|\x\|_{\I} \leq 1$, proving the necessity part. 
    For sufficiency, $\|\x\|_{\I} \leq 1$ implies the existence of a state $\y\in\I$ such that $\|\y-\x\|\leq 1$. 
    Now if on the contrary, $\L\cap[\p+1,v] \neq \emptyset$, then there exist anticoordinating types $i$ and $j$ such that $\y=\x-\bm{1}_i-\bm{1}_j$. 
    However, this results in a contradiction as then $\|\y-\x\|>1$. 
    Thus, $\L\cap[\p+1,v] = \emptyset$.
\end{proof}

%Let $\x$ be any of the states $\z + \bm{1}_s - \bm{1}_{u}$, where $u\leq\p$ is an anticoordinating type, $\z + \bm{1}_s - \bm{1}_{u}$, where $u\leq\p$ is an anticoordinating type.
\begin{lemma}   \label{lem_stability3}
    %Let $\qq<b'$.
    Consider the state $\z\in\I$, where there exists an anticoordinating type $s\in(\p,\q)$ satisfying $z_s < n_s$.
    Let $\x$ be any of the states 
    $\z + \bm{1}_s - \bm{1}_{\alpha}$, where $\alpha\leq\p$ is an anticoordinating type, 
    $\z + \bm{1}_s - \bm{1}_{b+b'-\alpha'+1}$, where $\alpha'\leq\pp$ is a coordinating type, 
    $\z + \bm{1}_s + \bm{1}_{\beta}$, where $\beta\geq\q$ is an anticoordinating type, 
    $\z + \bm{1}_s + \bm{1}_{b+b'-\beta'+1}$, where $\beta'\geq\qq$ is a coordinating type.
    %Define the state $\x = \z + \bm{1}_s + \bm{1}_{b+b'-v+1}$, where $v\geq \qq$ is a coordinating type.
    Then 
    $$
    \L(\z)\cap(\p,s] =\emptyset \iff \|\x\|_{\I} \leq 1.
    $$
\end{lemma}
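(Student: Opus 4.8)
The plan is to reduce all four candidate states to a single computation by exploiting their common structure. In each of the four cases, $\x$ has the form $\z+\bm{1}_s+\bm{1}_w$ or $\z+\bm{1}_s-\bm{1}_w$, where $w$ is the position of a \emph{fixed} type: either an $\A$-fixed anticoordinator ($\alpha\le\p$) or coordinator ($\alpha'\le\pp$) from which a player is removed, or a $\B$-fixed anticoordinator ($\beta\ge\q$) or coordinator ($\beta'\ge\qq$) to which a player is added. In every case the second perturbation pushes exactly one fixed entry away from its forced value by one, so $\x\notin\X_{\p,\q,\qq,\pp}$ and hence $\x\notin\I$. Consequently $\|\x\|_{\I}\ge 1$, and it suffices to decide when $\|\x\|_{\I}=1$, i.e., when some $\y\in\I$ satisfies $\|\y-\x\|=1$.

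The first key step is to show that the \emph{only} single-entry neighbor of $\x$ that can possibly lie in $\X_{\p,\q,\qq,\pp}$ is $\z+\bm{1}_s$. Indeed, any $\y\in\X_{\p,\q,\qq,\pp}$ must restore the perturbed coordinate $w$ to its forced value; since $\x$ differs from that value at $w$ by exactly one, a correction $\y=\x\pm\bm{1}_u$ with $\|\y-\x\|=1$ forces $u=w$ with the sign that undoes the second perturbation, leaving $\y=\z+\bm{1}_s$. Therefore $\|\x\|_{\I}=1$ if and only if $\z+\bm{1}_s\in\I$, and the four cases collapse to analyzing this one state.

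The second step is to test $\z+\bm{1}_s\in\I$ directly against the definition of $\I_{\p,\q,\qq,\pp}$. It remains in $\X_{\p,\q,\qq,\pp}$ because $s\in(\p,\q)$ is a wandering type with $z_s<n_s$, so the fixed entries are untouched. Adding a player at type $s$ can only increase the left-hand side of \eqref{R}, so \eqref{R} is preserved for every index. For \eqref{L} at index $i\in\{\p+1,\ldots,\q-1\}$, the term $x_s$ appears in $\sum_{k=i}^{\q-1}x_k$ exactly when $i\le s$ (using $s\le\q-1$); hence the left-hand side of \eqref{L} rises by one precisely for $i\in(\p,s]$ and is unchanged for $i\in(s,\q)$. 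Thus the perturbation breaks \eqref{L} if and only if some index $i\in(\p,s]$ was already tight, i.e., $i\in\L(\z)$. This gives $\z+\bm{1}_s\in\I\iff\L(\z)\cap(\p,s]=\emptyset$, which combined with the first step yields the claim.

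I expect the main obstacle to be the bookkeeping in the first step: verifying uniformly across all four cases that $\x$ has exactly one offending coordinate, that this coordinate sits at distance one from its admissible value, and that restoring it necessarily produces $\z+\bm{1}_s$ rather than some other neighbor. Once this uniqueness is established, the monotonicity check of \eqref{L} and \eqref{R} is routine and the equivalence with $\L(\z)\cap(\p,s]=\emptyset$ follows immediately.
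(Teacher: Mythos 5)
Your proposal is correct and follows essentially the same route as the paper's proof: identify $\z+\bm{1}_s$ as the unique single-entry neighbor of $\x$ that can lie in $\X_{\p,\q,\qq,\pp}$, reduce $\|\x\|_{\I}\leq 1$ to $\z+\bm{1}_s\in\I$, and check that \eqref{R} is monotonically preserved while \eqref{L} fails exactly when a tight index lies in $(\p,s]$. The only difference is presentational: the paper writes out one representative of the four cases and declares the rest similar, whereas you handle all four uniformly via their common structure.
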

\begin{proof}
    We only prove the case with $\x=\z + \bm{1}_s - \bm{1}_{b+b'-\beta'+1}$ as the remaining cases can be proven similarly.
    The state $\x$ takes the following form:
    \begin{align*}
        \ &(\underbrace{n_1,\ldots,n_{\p}, z_{\p+1},\ldots, z_s+1,\ldots,z_{\q-1}, 0,\ldots, 0}_{\text{anticoordinating}}, \\
        &\qquad\quad\underbrace{0,\ldots,1,\ldots,0, z_{\qq-1},\ldots, z_{\pp+1}, n'_{\pp},\ldots,n'_1}_{\text{coordinating}}).
    \end{align*}
    Clearly $\|\x\|_{\I}\neq 0$ as $\x\not\in\X_{\p,\q,\qq,\pp}$. 
    If $\|\x\|_{\I} = 1$, there exists $\y\in\I$ such that $\|\x-\y\|= 1$. 
    Then $\y$ must be $\x-\bm{1}_{b+b'-\beta'+1}$ to satisfy $\y\in\X_{\p,\q,\qq,\pp}$.
    Hence, $\y=\z+\bm{1}_s$.
    Then, since $\y$ satisfies \eqref{L}, $\L\cap(\p,s]=\emptyset$.
    This proves the necessity. 
    The same $\y$ proves sufficiency. 
\end{proof}

% \begin{lemma}   \label{lem_stability5}
%     Consider the state $z\in\I$, where $z_s < n_s$ for the anticoordinating type $s$.
%     Define the state $x = z + \bm{1}_s + \bm{1}_{b+v}$.
%     Then $\|\x\|_{\I} \geq 2$.
% \end{lemma}
% \begin{proof}
%     The state $\x$ takes the following form:
%     \begin{align*}
%         \ &(\underbrace{n_1,\ldots,n_r, z_{p+1},\ldots, z_s+1,\ldots,z_{q-1}, 0,\ldots, 0}_{\text{anticoordinating}}, \\
%         &\qquad\quad\underbrace{0,\ldots,0, z^c_{\qq -1},\ldots, z^c_{\pp +1}, n'_{\pp },\ldots,n'_v-1,\ldots,n'_1}_{\text{coordinating}}),
%     \end{align*}
% Then $y$ must be $z+\bm{1}_{b+v}$, resulting in $y = x(0)$.
% Thus, $y\not\in \I$, implying $\|\x\|_{\I} > 1$.
% \end{proof}
% \begin{lemma}   \label{lem_stability6}
%     Consider the state $\z\in\I$, where $z_s < n_s$, where $s$ is an anticoordinating type and $s\in\{q+1,\ldots,b\}$ or $s$ is a coordinating type and $s\in\{\qq +1,\ldots,b'\}$. 
%     Define the state $x = z + \bm{1}_s - \bm{1}_{v}$, where the anticoordinating type $v$ satisfies $p+1\leq v\leq q-1$.
%     Then
%     \begin{equation*}
%         \|\x\|_{\I} \leq 1
%         \iff
%         \R\cap[w^{\B}^0,q-1]=\emptyset
%     \end{equation*}
% \end{lemma}
Given the state $\z\in\I_{\p,\q,\qq,\pp}$, let $z^{\e}$ denote the maximum wandering anticoordinating type whose agents are not all playing $\A$, i.e., 
$
    z^{\e} \db \max\{i\in(\p,\q)\,|\, z_i<n_i\},
$
where we define $\max\emptyset = \p$.
Similarly, define
$
    z^{\f} \db \min\{i\in(\p,\q)\,|\, z_i>0\},
$
where we define $\min\emptyset = \q$.
%%%%
Given $\z\in\X$, define $z_\W$ as the number of $\A$-playing wandering anticoordinators at $\z$, i.e., 
$
    z_\W \db \sum_{i=\p+1}^{\q-1}z_i,
$
and let $n_\W$ denote the total number of wandering anticoordinators, i.e., 
$
    n_\W \db \sum_{i=\p+1}^{\q-1}n_i.
$
%%%%
Consider the state $\z\in\I_{\p,\q,\qq,\pp}$, where $z_\W<n_\W$, and let $s\in(\p,\q)$ be a wandering anticoordinating type that satisfies $z_s<n_s$.
Define $w^{\B}(\z)$ as the smallest wandering anticoordinating type, the agents of which switch to $\B$ upon activation at $\z+\bm{1}_s$:
\begin{equation*}
    w^{\B}(\z) \db
    \min\left\{i\in(\p,\q) \,|\, z_i>0,
    A(\z) > {\tau_{i}}
    \right\}.
\end{equation*}
%where $\gamma_i(s) = 1$ if $i=s$ and $\gamma_i(s) = 0$ otherwise. 
%So $w^{\B}(\z) = 0$ if $A(\z)= n$.
% Given a state $\z\in\I_{\p,\q,\qq,\pp}$ with $A(\z)> 0$, let $v$ be any type satisfying $z_v>0$.
% Define $w^{\B}(\z)$ as the greatest wandering anticoordinating type, the agents of which switch to $\A$ upon activation at the state $\z-\bm{1}_v$:
Similarly, for $\z\in\I_{\p,\q,\qq,\pp}$, where $z_\W<n_\W$, define 
\begin{equation*}
    w^{\A}(\z) \db
    \max\left\{i\in(\p,\q) \,|\, z_i<n_i,
    A(\z) \leq {\tau_{i}}-1
    \right\}.
\end{equation*}
For the case where $z_\W=n_\W$,
we define $w^{\A}(\z) = \p$ and $w^{\B}(\z) = \q$.
It follows that $w^{\A}(\z) < w^{\B}(\z)$. 
Consider the state $\z\in\I_{\p,\q,\qq,\pp}$, where $z_\W>0$, and let $s\in(\p,\q)$ be a wandering anticoordinating type that satisfies $z_s>0$.
Define $v^{\B}(\z)$ as the smallest wandering anticoordinating type, the agents of which switch to $\B$ upon activation at the state $\z-\bm{1}_v$:
\begin{equation*}
    v^{\B}(\z) \db
    \min\left\{i\in(\p,\q) \,|\, z_i>0,
    A(\z) > {\tau_{i}}+2
    \right\}.
\end{equation*}
Similarly, for $\z\in\I_{\p,\q,\qq,\pp}$, where $z_\W>0$, define 
\begin{equation*}
    v^{\A}(\z) \db
    \max\left\{i\in(\p,\q) \,|\, z_i<n_i,
    A(\z) \leq {\tau_{i}}+1
    \right\}.
\end{equation*}
%Also, for any $v$, $w^{\B}(x+\bm{1}_v) \in\{ w^{\B}(\x),w^{\B}(\x)-1\}$.
% In the absence of the spacial cases, any state $z\in\I$ can be written as
% \begin{figure*}[t]
% %\begin{strip}
% \begin{equation*}\scalebox{1}{\text{$
%     \z=(\underbrace{n_1,\ldots,n_p,
%     0, \ldots,0, z_j,\ldots, z_i, n_{i-1},\ldots,n_{w^{\B}-1},
%     0,\ldots,0,z_{r},\ldots,
%     z^q_s,n_{s+1},\ldots,n_{q-1},0,\ldots, 0}_{\text{anticoordinating}},\underbrace{*,\ldots,*}_{\text{coordinating}})$}}
% \end{equation*}
% %\end{strip}
% \end{figure*}
% where $z_r>0$ and $z^q_s<n_s$.
% We define $\overline{z}^1 = s$ and $z_w^0 = r$, $z_w^1=i$, $\underline{z}^0 = j$.
% Anticoordinating type:
% \begin{equation*}
%     \underline{z}^1=\min\{i\in(\p,\q)\,|\,z_i<n_i\},
% \end{equation*}
% \begin{equation*}
%     \overline{z}^1=\max\{i\in(\p,\q)\,|\,z_i<n_i\}.
% \end{equation*}
% Define $w^1$ and $w^2$ as 
% \begin{equation*}
%     w^1(\z) 
%     = \max\left\{i\,|\, i\leq w^{\B}(\z), z_i < n_i\right\},
% \end{equation*}
% \begin{equation*}
%     w^2(\z) 
%     = \min\left\{i\,|\, i\geq w^{\A}(\z), z_i > 0\right\}
% \end{equation*}
%It follows that $w^{\A}(\z)\leq v^{\A}(\z)$ and $w^{\B}(\z)\leq v^{\B}(\z)$.
%and $v^{\A}(\z)\leq w^{\B}(\z)+1$.
For the case where $z_\W=0$, define $v^{\A}(\z)=\p$ and $v^{\B}(\z)=\q$.
Recall that $\tau_0=n$ and $\tau'_0=-2$.
We further define $\tau_{b+1}=-2$ and $\tau'_{b'+1}=n+2$.
\begin{proposition} \label{th_stability}
    A positively invariant set $\I_{\p,\q,\qq ,\pp }\subseteq\X$, where $(\p,\q,\qq,\pp)\in\Omega$, is stable if and only if for every state $\z\in\I_{\p,\q,\qq ,\pp }$, the following statements hold:
    \begin{enumerate}
        \item if 
            $A(\z)=\floor{\tau_{\p}}+1 \text{ or }\ceil{\tau'_{\qq}}-1$, then
            %\Rightarrow
            $\L(\z)\cap(\p,z^{\e}] = \emptyset,$
        \item if
            $A(\z)=\floor{\tau_{\q}}+1 \text{ or } \ceil{\tau'_{\pp}}+1$, then
            %\Rightarrow
            $\R(\z)\cap[z^{\f},\q) = \emptyset,$
        \item if $\q+\qq\leq b+b'+1$, then 
        \begin{equation} \label{stabilityTheorem_e1}
            A(\z) \leq     \min\{\floor{\tau_{\p}},\ceil{\tau'_{\qq}}-2\},
        \end{equation}
        \begin{equation} \label{stabilityTheorem_e2}
            \R(\z)\cap[w^{\B}(\z),\q) = \emptyset,
        \end{equation}
        \item if $\p+\pp\geq 1$, then 
        \begin{equation} \label{stabilityTheorem_e3}
            A(\z) \geq \max\{\floor{\tau_{\q}}+2,\ceil{\tau'_{\pp }}+2\},
        \end{equation}
        %and
        \begin{equation}\label{stabilityTheorem_e4}
            \L(\z)\cap(\p,v^{\A}(\z)] = \emptyset,
        \end{equation}
        \item if $z_\W<n_\W$, then
        $$
        |\R(\z)\cap[w^{\B}(\z),z^{\e})| \times |\L(\z)\cap(w^{\B}(\z),z^{\e}]| = 0,
        $$
        and if $z_\W>0$, then
        $$ % \label{stabilityTheorem_eq34}
         |\R(\z)\cap[z^{\f},v^{\A}(\z))| \times |\L(\z)\cap(z^{\f},v^{\A}(\z)]| = 0,
        $$
        \item 
        if $z_\W<n_\W-1$ or $\q+\qq\leq b+b'+1$, then
        \begin{equation}\label{stabilityTheorem_e5}
        \L(\z)\cap(\p,w^{\A}(\z)] = \emptyset,
        \end{equation}
        \item 
        if $z_\W>1$ or $\p+\pp\geq 1$, then
        \begin{equation} \label{stabilityTheorem_e6}
            \R(\z)\cap[v^{\B}(\z),\q) = \emptyset.
        \end{equation}      %|b-\q|+|b'-\qq|>0 
    \end{enumerate}
\end{proposition}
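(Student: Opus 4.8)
The plan is to apply Lemma~\ref{lem_stabilityDefinition}, which reduces stability of $\I\db\I_{\p,\q,\qq,\pp}$ to the single-step claim that every $\x(0)$ with $\|\x(0)\|_\I=1$ obeys $\|\x(1)\|_\I\le 1$ under every activation. Since the minimal nonzero inter-state distance is one, such an $\x(0)$ is precisely $\z\pm\bm{1}_s$ for some $\z\in\I$ and type $s$. I would first isolate which perturbations are genuinely adjacent rather than interior: because \eqref{L} and \eqref{R} constrain only anticoordinators, adding or deleting an $\A$-player at a \emph{wandering coordinator} keeps the state in $\I$, so the adjacent perturbations are exactly $(+\bm{1}_s)$ at a wandering anticoordinator that violates \eqref{L}, $(-\bm{1}_s)$ at a wandering anticoordinator that violates \eqref{R}, $(+\bm{1}_s)$ at a $\B$-fixed agent, and $(-\bm{1}_s)$ at an $\A$-fixed agent. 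The last two are feasible exactly when a $\B$-fixed agent exists ($\q+\qq\le b+b'+1$) or an $\A$-fixed agent exists ($\p+\pp\ge1$), matching the hypotheses of statements~3 and~4.

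Next, since $\x(1)=\x(0)\pm\bm{1}_v$, only an active agent that actually switches can move the state (otherwise $\x(1)=\x(0)$ and $\|\x(1)\|_\I=1$ trivially). I would use the definitions of $w^{\A},w^{\B}$ (for a $+\bm{1}_s$ perturbation, which raises $A$ by one) and of $v^{\A},v^{\B}$ (for $-\bm{1}_s$) to pin down exactly which wandering anticoordinators flip, and compare $A(\z)$ against $\floor{\tau_{\p}}+1$, $\ceil{\tau'_{\qq}}-1$, $\floor{\tau_{\q}}+1$, $\ceil{\tau'_{\pp}}+1$ (the extrema from Corollary~\ref{cor_exremumNumberOfAPlayers}) to decide whether a boundary \emph{fixed} agent flips. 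A key simplification is that the order of perturbation and activation is irrelevant: each resulting $\x(1)$ has one of the two-coordinate shapes $\z\pm\bm{1}_s\pm\bm{1}_v$ already analysed in Lemmas~\ref{lem_stability4}--\ref{lem_stability3} with $\z\in\I$, so I can read off $\|\x(1)\|_\I$ directly.

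The bookkeeping then assigns each case to one statement. A wandering-anticoordinator perturbation that forces a \emph{fixed} agent to switch can occur only at the extreme $A(\z)$ values and is governed by Lemma~\ref{lem_stability3}; requiring its conclusion over the tightest feasible choice $s=z^{\e}$ (resp. $s=z^{\f}$) yields statements~1 and~2. A fixed-agent perturbation must both leave the remaining boundary fixed agents unmoved---giving the $A(\z)$ bounds \eqref{stabilityTheorem_e1} and \eqref{stabilityTheorem_e3}---and control the induced wandering switch through $w^{\B}$ (resp. $v^{\A}$), giving \eqref{stabilityTheorem_e2} and \eqref{stabilityTheorem_e4}. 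A wandering perturbation triggering another wandering anticoordinator to flip in the \emph{opposite} direction is governed by Lemmas~\ref{lem_stability4} and~\ref{lem_stability}, producing the product conditions of statement~5; a flip in the \emph{same} direction is governed by Lemmas~\ref{lem_stability2} and~\ref{lem_stability3}, producing statements~6 and~7, where the disjunctive hypotheses ($z_\W<n_\W-1$, or a fixed agent exists) encode exactly when such a same-direction state is realizable. Lemma~\ref{lem_stability0} is used throughout to collapse the two-sided membership tests into the stated one-sided intersections.

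For the equivalence I would argue both directions inside this enumeration. Sufficiency is immediate once every case yields $\|\x(1)\|_\I\le 1$ under statements~1--7. Necessity follows because each equivalence in Lemmas~\ref{lem_stability4}--\ref{lem_stability3} lets me turn a violated statement into an explicit adjacent $\x(0)$ and activation with $\|\x(1)\|_\I\ge 2$; for instance a nonzero product in statement~5 gives, via Lemma~\ref{lem_stability} with $s=z^{\e}$, $v=w^{\B}$, a witness $\z+\bm{1}_{z^{\e}}-\bm{1}_{w^{\B}}$ at distance at least two, whose initial perturbation $\z+\bm{1}_{z^{\e}}$ is indeed adjacent since $\L(\z)\cap(w^{\B},z^{\e}]\neq\emptyset$ forces a violation of \eqref{L}. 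The main obstacle is the completeness and non-redundancy of the case split: I must verify that the perturbation type, the sign of the $A$-shift, and the coupling between coordinator and anticoordinator switches are jointly exhausted, that the extremal indices $z^{\e},z^{\f},w^{\A},w^{\B},v^{\A},v^{\B}$ really furnish the tightest instance of each lemma, and that the degenerate boundaries ($\p=0$, $\q=b+1$, $z_\W\in\{0,n_\W\}$, empty $\L(\z)$ or $\R(\z)$) are absorbed by the conventions $\tau_0=n$, $\tau'_0=-2$, $\tau_{b+1}=-2$, $\tau'_{b'+1}=n+2$ rather than spawning new cases.
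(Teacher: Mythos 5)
Your proposal is correct and follows essentially the same route as the paper's proof: reduce stability to a one-step condition via Lemma \ref{lem_stabilityDefinition}, enumerate the adjacent states $\z\pm\bm{1}_s$ according to which defining condition of $\I_{\p,\q,\qq,\pp}$ they violate (a wandering anticoordinator breaking \eqref{L} or \eqref{R}, or a fixed type leaving $\X_{\p,\q,\qq,\pp}$), classify the active agent's response via $w^{\A},w^{\B},v^{\A},v^{\B}$ and the extremal values of $A(\z)$, and read off $\|\x(1)\|_{\I}$ from Lemmas \ref{lem_stability4}--\ref{lem_stability3}, with necessity obtained from the equivalences in those lemmas. This is precisely the paper's Case 1/Case 2/Case 3 decomposition, so no further comparison is needed.
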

\begin{proof}
(necessity) 
Let $\I$ be stable. 
Consider the initial state $\x(0)\in\X$ that satisfies $\|\x(0)\|_{\I} = 1 $. 
Then there exists some state $\z\in\I$ such that $\|\x(0) - \z\| = 1$.
%Then there exists some state $\z\in\I$ such that $\|\x(0) - \z\| = 1$, implying that for some type $s$, either $\x(0)=\z+\bm{1}_s$ or $\x(0)=\z-\bm{1}_s$.
% \begin{equation*}    %\label{stabilityTheorem_eq1}
%     \exists s: x_i(0) = z_i \forall i\neq s \text{ and } |x_s(0)-z_s| = 1.
% \end{equation*}
Because of $\|\x(0)\|_{\I} = 1 $ and by the definition of $\I$, one of the following cases is in force.
According to Lemma \ref{lem_stabilityDefinition}, stability of $\I$ yields $\|\x(1)\|_{\I}\leq 1$ in all of the cases.

\emph{Case 1:} $\x(0) \in \X_{\p,\q,\qq ,\pp }$ and $\x(0)$ violates \eqref{L}.
Then $\x(0)=\z+\bm{1}_s$ for some $s\in(\p,\q)$, and $\x(0)$ takes the form
\begin{equation*}\scalebox{1}{\text{$
    (\underbrace{n_1,\ldots,n_{\p},z_{\p+1},\ldots,z_s+1,\ldots,z_{\q-1}, 0,\ldots,0}_{\text{anticoordinating}}, 
    \underbrace{*, \ldots, *}_{\text{coordinating}})$}}.
\end{equation*}
Denote the type of the initially active agent by $v$.
If $v$ is coordinating and $v \leq \pp $, then
\begin{equation*}
    A(\x(0))
    =A(\z)+1
    \geq {\tau'_{\pp }}+2,
\end{equation*}
where the last inequality is due to the invariance of $\I$.
Hence, the active agent does not switch strategies.
The same holds when $v$ is anticoordinating and $v\geq \q$.
On the other hand, if the initially active agent does not switch strategies, $\|\x(1)\|_{\I}=1$, making the result trivial.
Otherwise, one of the following sub-cases holds.

\emph{Case 1.1:} $v$ is anticoordinating and $v\leq\p$, or $v$ is coordinating and $v\geq\qq$.
Then respectfully $A(\x(0))-1>\tau_{\p}$ or $A(\x(0))\geq\tau'_{\qq}$ must hold for the active agent to switch strategies. 
Equivalently, $A(\z)>\tau_{\p}$ or $A(\z)\geq\tau'_{\qq}-1$.
On the other hand, invariance of $\I$ implies $A(\z)\leq\tau_{\p}+1$ and $A(\z)<\tau'_{\qq}$.
Thus, the two conditions become $A(\z)=\floor{\tau_{\p}}+1$ or $A(\z)=\ceil{\tau'_{\qq}}-1$.
Hence, in view of Lemma \ref{lem_stability3} and $\|\x(1)\|_{\I}\leq1$,
\begin{equation} \label{stabilityTheorem_eq22}
    A(\z)=\floor{\tau_{\p}}+1 \text{ or }\ceil{\tau'_{\qq}}-1
    \Rightarrow
    \L(\z)\cap(\p,s] = \emptyset.
\end{equation}

\emph{Case 1.2:}   $v$ is anticoordinating,  $v\in[w^{\B}(\z),\q)$, and $x_v(0)>0$. 
Then the active agent will switch to $\B$, resulting in $\x(1) = \z - \bm{1}_v + \bm{1}_s$.
If $v \geq s$, then in view of Lemma \ref{lem_stability4}, $\x(1)\in\I$.
If $v<s$, then in view of Lemma \ref{lem_stability},
\begin{equation} \label{stabilityTheorem_eq3}
     |\R(\z)\cap[v,s)| \times |\L(\z)\cap(v,s]| = 0.
\end{equation}

\emph{Case 1.3:}   $v$ is anticoordinating,  $v\in(w^{\A}(\z),w^{\B}(\z))$.
Then if $v=s$, we obtain $\x(1)=\x(0)$.
Otherwise, the active agent will not switch.

\emph{Case 1.4:}  $v$ is anticoordinating,
$v\in(\p,w^{\A}(\z)]$, and $x_v(0) <n_v$. 
Then the active agent will choose $\A$, resulting in $\x(1) = \z + \bm{1}_v + \bm{1}_s$.
Hence, according to Lemma \ref{lem_stability2},
\begin{equation}    \label{stabilityTheorem_eq4}
    \L(\z)\cap(\p,\min\{v,s\}] = \emptyset.
\end{equation}

\emph{Case 1.5:} $v$ is coordinating and $v \in(\pp,\qq)$.
Then regardless of the strategy the active agent chooses, the state $\y=\x(1)-\bm{1}_s$ belongs to $\I$, resulting in $\|\x(1)\|_{\I} =1$.

In view of Lemma \ref{lem_stabilityDefinition}, stability of $\I$ implies \eqref{stabilityTheorem_eq22}, \eqref{stabilityTheorem_eq3}, and \eqref{stabilityTheorem_eq4} are satisfied for any initially active agent and any initial state $\x(0)$ that falls into Case 1.
The existence of $s$ in Case 1 implies $z_\W<n_\W$.
Moreover, the maximum value that $s$ can take is $z^{\e}$.
Hence, \eqref{stabilityTheorem_eq22} yields that if $z_\W<n_\W$, then
\begin{equation} \label{stabilityTheorem_eq30}
    A(\z)=\floor{\tau_{\p}}+1 \text{ or }\ceil{\tau'_{\qq}}-1
    \Rightarrow
    \L(\z)\cap(\p,z^{\e}] = \emptyset.
\end{equation} 
Now if $z_\W=n_\W$, then $(\p,z^{\e}] = (\p,\p] = \emptyset$, making the condition trivial. 
Hence, \eqref{stabilityTheorem_eq30} can be stated without the condition $z_\W<n_\W$.
In \eqref{stabilityTheorem_eq3}, the minimum value that $v$ can take is $w^{\B}(\z)$.
Thus, we obtain that if $z_\W<n_\W$, then 
\begin{equation}   \label{stabilityTheorem_eq31}
         |\R(\z)\cap[w^{\B}(\z),z^{\e})| \times |\L(\z)\cap(w^{\B}(\z),z^{\e}]| = 0.
\end{equation}
For \eqref{stabilityTheorem_eq4}, both type-$s$ and $v$ agents switch to $\A$, implying $z_\W<n_\W-1$.
On the other hand, $\min\{w^{\A}(\z),z^{\e}\}=w^{\A}(\z)$.
Thus, we obtain that if $z_\W<n_\W-1$, then
\begin{equation}    \label{stabilityTheorem_eq32}
    \L(\z)\cap(\p,w^{\A}(\z)] = \emptyset.
\end{equation}

\emph{Case 2:} 
$\x(0) \in \X_{\p,\q,\qq ,\pp }$ and $\x(0)$ violates \eqref{R}.
Then $\x(0)=\z-\bm{1}_s$ for some $v\in(\p,\q)$, and $\x(0)$ takes the form
\begin{equation*}\scalebox{1}{\text{$
    (\underbrace{n_1,\ldots,n_{\p},z_{\p+1},\ldots,z_s-1,\ldots,z_{\q-1}, 0,\ldots,0}_{\text{anticoordinating}}, 
    \underbrace{*, \ldots, *}_{\text{coordinating}})$}}.
\end{equation*}
Then similar to the previous case, the followings can be concluded:
if $z_\W>0$, then
\begin{gather} 
A(\z)=\floor{\tau_{\q}}+1 \text{ or } \ceil{\tau'_{\pp}}+1
    \Rightarrow
    \R(\z)\cap[z^{\f},\q) = \emptyset, \label{stabilityTheorem_eq33}\\
%\end{equation} 
%\begin{equation}   
         |\R(\z)\cap[z^{\f},v^{\A}(\z))| \times |\L(\z)\cap(z^{\f},v^{\A}(\z)]| = 0. \label{stabilityTheorem_eq34}
\end{gather}
Moreover, if $z_\W>1$, then
\begin{equation}    \label{stabilityTheorem_eq35}
    \R(\z)\cap[v^{\B}(\z),\q) = \emptyset.
\end{equation}

\emph{Case 3:} $\x(0) \not\in \X_{\p,\q,\qq ,\pp }$. 
Denote the type of the initially active agent by $v$.
Then either of the followings holds.

\emph{Case 3.a:} 
$\x = \z + \bm{1}_s$ for some anticoordinating type $s\geq \q$ or coordinating type $s\geq\qq$.  
If $v$ is anticoordinating and $v\leq\p$ or $v$ is coordinating and $v\leq\pp$, then $\|\x(1)\|_{\I} \leq1$ implies that the active agent does not switch to $\B$.
Thus, $\ceil{\tau'_{\pp }} \leq A(\x(0))-1 \leq \floor{\tau_{\p}}$, or equivalently, $\ceil{\tau'_{\pp }} \leq A(\z) \leq \floor{\tau_{\p}}$, where the left condition is already implied by the invariance of $\I$.
Hence, 
%\begin{equation*}    %\label{stabilityTheorem_eq12}
$
    A(\z) \leq \floor{\tau_{\p}}.
$    
    %\leq \floor{\tau_{\p}}.    
%\end{equation*}
If $v$ is anticoordinating and $v\geq\q$ or $v$ is coordinating and $v\geq\qq$, then $\|\x(1)\|_{\I} \leq1$ implies that the active agent does not switch to $\A$; equivalently, $\floor{\tau_{\q}}+1 \leq A(\x(0))\leq \ceil{\tau'_{\qq }}-1$, where the left condition is already implied by the invariance of $\I$.
Hence, 
%\begin{equation*}    %\label{stabilityTheorem_eq20}
    %\floor{\tau_{\q}} \leq 
$    
    A(\z)\leq \ceil{\tau'_{\qq }}-2.    
$    
%\end{equation*}
If $v$ is coordinating and $v\in(\pp,\qq)$, then regardless of the active agent's choice at time $1$, the state $\y=\x(1)-\bm{1}_s$ belongs to $\I$ and satisfies $\|\y-\x(1)\|\leq1$, implying $\|\x(1)\|_{\I} \leq 1$.
Finally, if $v$ is anticoordinating and $v\in(\p,\q)$, then either 
$v\in[w^{\B}(\z),\q)$ and $x_v(0)>0$, resulting in 
%\begin{equation*} %\label{stabilityTheorem_eq25} 
$
    \R(\z)\cap[v,\q) = \emptyset,
$    
%\end{equation*}
or $v\in(\p,w^{\A}(\z)]$ and $x_v(0) <n_v$, resulting in 
%\begin{equation*} %\label{stabilityTheorem_eq26} 
$
    \L(\z)\cap(\p,v] = \emptyset.
$    
%\end{equation*}
% the situation is similar to Cases 1.a and 1.b, implying $\|\x(1)\|_{\I} \leq 1$ if and only if \eqref{stabilityTheorem_eq3} and \eqref{stabilityTheorem_eq4}.
So this case can be summarized as
if $\q\leq b$ or $\qq\leq b'$, then
\begin{gather}    
    A(\z) \leq \min\{\floor{\tau_{\p}},\ceil{\tau'_{\qq }}-2\}, \label{stabilityTheorem_eq36} \\
%\end{equation}
%\begin{equation} \label{stabilityTheorem_eq37} 
    \L(\z)\cap(\p,w^{\A}(\z)] = \emptyset, \label{stabilityTheorem_eq37} \\
%\end{equation}
%\begin{equation}    \label{stabilityTheorem_eq38}
    \R(\z)\cap[w^{\B}(\z),\q) = \emptyset. \label{stabilityTheorem_eq37} 
\end{gather}

\emph{Case 3.b:} 
$\x = \z - \bm{1}_s$ for some anticoordinating type $s\leq \p$ or coordinating type $s\leq\pp$.  
Similar to the previous case, here we conclude that
if $\p\geq 1$ or $\pp\geq 1$, then
\begin{gather}    %\label{stabilityTheorem_eq39}
    A(\z) \geq \max\{\floor{\tau_{\q}}+2,\ceil{\tau'_{\pp }}+2\}. \label{stabilityTheorem_eq39} \\
%\end{equation}
%\begin{equation} %\label{stabilityTheorem_eq40} 
    \L(\z)\cap(\p,v^{\A}(\z)] = \emptyset.\label{stabilityTheorem_eq40} \\
%\end{equation}
%\begin{equation}    %\label{stabilityTheorem_eq41}
    \R(\z)\cap[v^{\B}(\z),\q) = \emptyset.\label{stabilityTheorem_eq41} 
\end{gather}
% The sufficiency of the theorem follows from the fact that for any initial condition $\x(0)\in\X$ satisfying $\|x(0)\|_{\I}=1$ and under any activation sequence, one of the cases 1, 2, or 3 will take place for $x(1)$.
% In all of these cases the condition $\|\x(1)\|_{\I} \leq 1$ is satisfied, because the conditions of the theorem, and hence, all of \eqref{stabilityTheorem_eq12} to \eqref{stabilityTheorem_eq17} hold. 
The proof then follows \eqref{stabilityTheorem_eq30} to
%\eqref{stabilityTheorem_eq35}, and \eqref{stabilityTheorem_eq36} to
\eqref{stabilityTheorem_eq41}.

(sufficiency) Starting from an arbitrary initial state $\x(0)$ satisfying $\|\x(0)\|_{\I} = 1$, and under any initial condition, $\x(1)$ ends up at one of the above three cases, the conditions of which are all fulfilled as the seven conditions in the Theorem are in force. 
Therefore, $\|\x(1)\|_{\I}\leq 1$, and hence, $\I$ is stable.
\end{proof}

% %What happens if we increase A(\z)?
% \begin{equation*}\scalebox{.95}{\text{$
%     (\underbrace{n_1,\ldots,n_{p},z_{p+1},\ldots,
%     z_v, n_{v+1}\ldots,n_{w^{\B}-1},z_{w^{\B}},
%     \ldots,z_{q-1}, 0,\ldots,0}_{\text{anticoordinating}})$}}.
% \end{equation*}  
% Given $z$, for each $v<w^{\B}$, either $z_v=n_v$ or $\sum_{v}^{q-1}z_i<\floor{\tau_i}+1$.
% But then $z' = z+\bm{1}_v$ is also in $\I$. 
% Then if $w^{\B}(z')=w^{\B}(\z)$, same conclusion. 
% Otherwise, $w^{\B}(z') = v$.
% So $k = \tau_i-F$, $k>n_i$ or $k+A(\z)>\tau_{i+1}$

The conditions $\p+\pp\geq 1$ and $\q+\qq \leq b+b'+1$ in Proposition \ref{th_stability} are weak as they imply that the invariant set has at least one $\A$-fixed and one $\B$-fixed type.
When the conditions are satisfied, the proposition is simplified to the following.
\begin{theorem} \label{cor_stability}
    A positively invariant set $\I_{\p,\q,\qq ,\pp }\subseteq\X$, where $(\p,\q,\qq,\pp)\in\Omega$, $\p+\pp\geq 1$, and $\q+\qq \leq b+b'+1$, is stable if and only if for every state $\z\in\I_{\p,\q,\qq ,\pp }$, the followings hold:
    % $$\max\{\floor{\tau_{\q}}+2,\ceil{\tau'_{\pp }}+2\} \leq A(\z) \leq \min\{\floor{\tau_{\p}},\ceil{\tau'_{\qq }}-2\},$$
    % $$\L(\z)\cap(\p,w^{\A}(\z)] =
    %   \R(\z)\cap[w^{\B}(\z),\q)  =\emptyset.$$
    \begin{equation}    \label{cor_stability_e1}
        A(\z)\in[\max\{\floor{\tau_{\q}},\ceil{\tau'_{\pp }}\}+2, \min\{\floor{\tau_{\p}},\ceil{\tau'_{\qq }}-2\}],
    \end{equation}
    % \begin{equation}    \label{cor_stability_e2}
    %     z_\W>0\Rightarrow
    %     \L(\z)\cap(\p,w^{\A}(\z)] =\emptyset,
    % \end{equation}
    % \begin{equation}    \label{cor_stability_e3}
    % z_\W<n_\W \Rightarrow
    %   \R(\z)\cap[w^{\B}(\z),\q)  = \emptyset.
    % \end{equation}    
    \begin{equation}    \label{cor_stability_e2}
        \L(\z)\cap(\p,v^{\A}(\z)] 
        =\R(\z)\cap[w^{\B}(\z),\q)
        =\emptyset.
    \end{equation}
\end{theorem}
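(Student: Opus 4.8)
The plan is to derive Theorem~\ref{cor_stability} directly from Proposition~\ref{th_stability} by specializing its seven conditions to the regime $\p+\pp\geq 1$ and $\q+\qq\leq b+b'+1$. Under these two hypotheses the guards of the third and fourth conditions are both in force, so those conditions reduce to the four assertions \eqref{stabilityTheorem_e1}--\eqref{stabilityTheorem_e4}. Merging the upper bound \eqref{stabilityTheorem_e1} with the lower bound \eqref{stabilityTheorem_e3}, and using $\max\{\floor{\tau_{\q}}+2,\ceil{\tau'_{\qq}}\}$-type identities so that $\max\{\floor{\tau_{\q}}+2,\ceil{\tau'_{\pp}}+2\}=\max\{\floor{\tau_{\q}},\ceil{\tau'_{\pp}}\}+2$, yields exactly the range \eqref{cor_stability_e1}, while \eqref{stabilityTheorem_e2} and \eqref{stabilityTheorem_e4} are precisely the two emptiness requirements in \eqref{cor_stability_e2}. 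This already settles one direction: if $\I_{\p,\q,\qq,\pp}$ is stable, Proposition~\ref{th_stability} gives all seven conditions, and conditions (3) and (4) then give \eqref{cor_stability_e1} and \eqref{cor_stability_e2}. It also records that conditions (3) and (4) together are \emph{equivalent} to \eqref{cor_stability_e1}--\eqref{cor_stability_e2}.

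For the converse I would assume \eqref{cor_stability_e1} and \eqref{cor_stability_e2} and verify that all seven conditions of Proposition~\ref{th_stability} hold, whence stability follows. Conditions (3) and (4) are immediate from the equivalence above. Conditions (1) and (2) become \emph{vacuous}: the range \eqref{cor_stability_e1} forces $A(\z)\leq\floor{\tau_{\p}}<\floor{\tau_{\p}}+1$ and $A(\z)\leq\ceil{\tau'_{\qq}}-2<\ceil{\tau'_{\qq}}-1$, so the trigger value of condition (1) is never attained; symmetrically $A(\z)\geq\floor{\tau_{\q}}+2>\floor{\tau_{\q}}+1$ and $A(\z)\geq\ceil{\tau'_{\pp}}+2>\ceil{\tau'_{\pp}}+1$ rule out the trigger of condition (2). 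Condition (5) follows from \eqref{cor_stability_e2} by sub-interval containment: since $z^{\e}\leq\q-1$ and $z^{\f}\geq\p+1$, we have $[w^{\B}(\z),z^{\e})\subseteq[w^{\B}(\z),\q)$ and $(z^{\f},v^{\A}(\z)]\subseteq(\p,v^{\A}(\z)]$, so each product in condition (5) has a vanishing factor.

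The remaining work, and the main obstacle, is conditions (6) and (7), which involve the auxiliary thresholds $w^{\A}(\z)$ and $v^{\B}(\z)$ rather than the $v^{\A}(\z)$ and $w^{\B}(\z)$ appearing in \eqref{cor_stability_e2}. The key monotonicity facts are $w^{\A}(\z)\leq v^{\A}(\z)$ and $w^{\B}(\z)\leq v^{\B}(\z)$ whenever both thresholds are given by their max/min definitions, i.e.\ when $0<z_\W<n_\W$: indeed the qualifying index set defining $v^{\A}$ contains that defining $w^{\A}$, because the requirement $A(\z)\leq\tau_i-1$ is stronger than $A(\z)\leq\tau_i+1$, so its maximum is at least as large, and dually for $w^{\B}\leq v^{\B}$. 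Given these, $(\p,w^{\A}(\z)]\subseteq(\p,v^{\A}(\z)]$ and $[v^{\B}(\z),\q)\subseteq[w^{\B}(\z),\q)$ reduce conditions (6) and (7) to \eqref{cor_stability_e2}.

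Finally I would dispose of the boundary cases where the monotonicity is broken by the separate conventions: $z_\W=n_\W$ for (6), where $w^{\A}(\z)=\p$ makes $(\p,w^{\A}(\z)]=\emptyset$ and (6) trivial, and $z_\W=0$ for (7), where $v^{\B}(\z)=\q$ makes $[v^{\B}(\z),\q)=\emptyset$ and (7) trivial. This leaves the genuinely nontrivial cases $z_\W=0$ for (6) and $z_\W=n_\W$ for (7), in which the offending threshold is fixed to $\p$ or $\q$ by convention and containment is unavailable, so a direct structural argument using the strict spacing \eqref{assumption_tauOrdering} is needed. When $z_\W=0$ all wandering anticoordinators play $\B$, so by \eqref{L} an index $i$ lies in $\L(\z)$ only if $\floor{\tau_i}=\sum_{k=1}^{\pp}n'_k+\sum_{k=1}^{\p}n_k-1\leq A(\z)-1$, whereas every index qualifying for $w^{\A}(\z)$ satisfies $\floor{\tau_i}\geq A(\z)+1$; by \eqref{assumption_tauOrdering} the former index strictly exceeds the latter, forcing $\L(\z)\cap(\p,w^{\A}(\z)]=\emptyset$, and the case $z_\W=n_\W$ for (7) is handled symmetrically via \eqref{R}. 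Assembling these verifications shows \eqref{cor_stability_e1}--\eqref{cor_stability_e2} imply all seven conditions, and the theorem follows from Proposition~\ref{th_stability}.
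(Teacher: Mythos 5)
Your proof is correct and follows the same skeleton as the paper's: both directions come from specializing the seven conditions of Proposition~\ref{th_stability} under the hypotheses $\p+\pp\geq1$ and $\q+\qq\leq b+b'+1$, with necessity read off from conditions (3)--(4) (whose guards are in force) and sufficiency obtained by showing that \eqref{cor_stability_e1}--\eqref{cor_stability_e2} subsume the remaining conditions. Where you genuinely add value is in the treatment of conditions (6) and (7). The paper disposes of them with the blanket claim $w^{\A}(\z)\leq v^{\A}(\z)$ and $w^{\B}(\z)\leq v^{\B}(\z)$, but these inequalities only follow from the max/min definitions when $0<z_\W<n_\W$; in the boundary cases the conventions point the wrong way --- for instance when $z_\W=0$ one has $v^{\A}(\z)=\p$ by convention while $w^{\A}(\z)$ is still given by its maximum formula and may exceed $\p$, so $(\p,w^{\A}(\z)]\not\subseteq(\p,v^{\A}(\z)]$ and the containment argument collapses (symmetrically for $w^{\B},v^{\B}$ when $z_\W=n_\W$). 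You correctly isolate exactly these two cases and close them with a direct comparison of $\floor{\tau_i}$ between the indices in $\L(\z)$ (resp.\ $\R(\z)$) and those qualifying for $w^{\A}(\z)$ (resp.\ $v^{\B}(\z)$), invoking the spacing assumption \eqref{assumption_tauOrdering}; this is sound and fills a real gap in the published argument. The remainder of your verification --- the vacuity of conditions (1)--(2) under the range \eqref{cor_stability_e1}, the merging of \eqref{stabilityTheorem_e1} and \eqref{stabilityTheorem_e3} into \eqref{cor_stability_e1}, and the sub-interval containments disposing of condition (5) --- coincides with what the paper does.
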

\begin{proof}
    Equation \eqref{cor_stability_e1} yields
    \eqref{stabilityTheorem_e1} and \eqref{stabilityTheorem_e3} in
    Proposition \ref{th_stability}, which in turn imply that the first two statements in the proposition are satisfied. 
    On the other hand, $w^{\A}(\z)\leq v^{\A}(\z)$ and $w^{\B}(\z)\leq v^{\B}(\z)$.
    Hence, \eqref{cor_stability_e2} yields  \eqref{stabilityTheorem_e2}, \eqref{stabilityTheorem_e4}, \eqref{stabilityTheorem_e5}, and \eqref{stabilityTheorem_e6}.
    Thus, the third, fourth, sixth, and seventh statements also hold. 
    On the other hand, \eqref{stabilityTheorem_e2} and \eqref{stabilityTheorem_e4} cover the fifth statement in the proposition, completing the proof.
\end{proof}

%%%%%%%%%%%%%%%%%%%%%%%%%%%%%%%%%%%
\section{Revisiting the example: II}    \label{sec_revisitingExample2}
Consider the first positively invariant set $\I_{\tb{1},\tr{5},\tr{3},\tb{1}}$.
Given the initial state $\x(0)=(3,0,0,1,0,0,0,0,3)$ and when the initially active agent is a type-4 $\A$-playing anticoordinator, we obtain $\x(1)=(3,0,0,2,0,0,0,0,3)$.
It can be shown that $\|\x(1)\|_{\I_{\tb{1},\tr{5},\tr{3},\tb{1}}}=2$; hence, $\I_{\tb{1},\tr{5},\tr{3},\tb{1}}$ is unstable. 
This is can be also verified by Theorem \ref{cor_stability} as for the state $\z=\x(0)$ above, $\L(\z)=\{4\}$ which intersects $(p,v^{\A}(\z)]=(1,4]$.
% For the first positively invariant set, consider the state $\z=(3,0,0,1,0,0,0,0,3)$. 
% It holds that $\z\not\in\I$ and $\|\z\|_{\I}=1$.
% Now if $\x(0)=\z$ and the initially active agent is a type-4 anticoordinating playing $\B$. 
% Then $\x(1) = (3,0,0,2,0,0,0,0,3)$. 
% Then $\|\x(1)\|_{\I}=2$. 
% Assume on the contrary that $\|\x(1)\|_{\I}\leq1$, implying the existence of $\y\in\I$ such that $\|\y-\x(1)\|= 1$. 
% Now $\y\in\I$ yields $y_1 = 4$. 
% Since $\|\y-\x(1)\|= 1$; hence, $\y = (4,0,0,2,0,0,0,0,3)$. 
% However, $y_4\leq 4$.

For the second positively invariant set $\I_{\tb{0}, \tr{2}, \tr{5}, \tb{3}}$, Condition \eqref{cor_stability_e1} becomes
$
    A(\z)\in[\max\{9,14\}+2, \min\{42,23\}]=[16,23],
$
which is satisfied as $A(\z)\in[18,20]$ for all $\z\in\I_{\tb{0}, \tr{2}, \tr{5}, \tb{3}}$.
On the other hand, the only states that result in a nonempty $\L$ are
$\z^1=({\color{Mygreen}4}, {\color{Myred}0}, {\color{Myred}0}, {\color{Myred}0}, {\color{Myred}0}, {\color{Mygreen}0}, {\color{Myblue}10}, {\color{Myblue}2}, {\color{Myblue}3})$ and  
$\z^2=({\color{Mygreen}4}, {\color{Myred}0}, {\color{Myred}0}, {\color{Myred}0}, {\color{Myred}0}, {\color{Mygreen}1}, {\color{Myblue}10}, {\color{Myblue}2}, {\color{Myblue}3})$, yielding $v^{\A}(\z^1) = v^{\A}(\z^2) = \max\emptyset=0$.
Similarly, the only state that results in a nonempty $\R$ is 
$\z^3=({\color{Mygreen}3}, {\color{Myred}0}, {\color{Myred}0}, {\color{Myred}0}, {\color{Myred}0}, {\color{Mygreen}1}, {\color{Myblue}10}, {\color{Myblue}2}, {\color{Myblue}3})$, yielding
$w^{\B}(\z^3) = \min\emptyset = 2$.
Hence, 
$
    \L(\z)\cap(0,0] =
    \R(\z)\cap[2,2) = \emptyset,
$ for all $\z\in\I_{\tb{0}, \tr{2}, \tr{5}, \tb{3}}$,
implying the stability of $\I_{\tb{0}, \tr{2}, \tr{5}, \tb{3}}$.

Finally, the equilibrium $\x^*$ satisfies the first condition of Theorem \ref{cor_stability} as $\A(\x^*)=31\in[27,42]$. 
The second condition is also satisfied since $\L(\z)=\R(\z)=\emptyset$.
Hence, $\x^*$ is stable.
%%%%%%%%%%%%%%%%%%%%%%%%%%%%%%%%%%%
%%%%%%%%%%%%%%%%%%%%%%%%%%%%%%%%%%%
\section{Synchronous updates}   \label{sec_synchronousUpdates}
The agents in our model update their strategies \emph{asynchronously}; that is, a single agent becomes active at each time step. 
For example, individuals may separately decide to join or leave NGOs at different times, based on their availability, and companies may change their strategies separately, based on their financial situation. 
Although often less realistic, one may consider the \emph{synchronous} case, where all agents update their strategies simultaneously at every time step \cite{lopez2008social}. 
This leaves no room for the possible stochasticity in the agents' activation sequence, resulting in fully deterministic dynamics.

More specifically, given the population state $\x(t)$, the state at the next time step is obtained by $\x(t+1)=\b(\x(t))$, where $\b:\X\to\X$ is the \emph{best-response map} defined by 
\begin{equation*}\scalebox{.86}{\text{$
    \beta_i(\x) = 
    \begin{cases}
        n_i     & A(\x)\leq \tau_i \\
        x_i     & A(\x) = \floor{\tau_i} +1\\
        0       & A(\x) > \tau_i+1
    \end{cases},
    \beta_j(\x) = 
    \begin{cases}
        n'_j    & A(\x)\geq \tau'_j+1 \\
        n'_j-x'_j    & A(\x) = \ceil{\tau'_j}\\
        0       & A(\x) < \tau'_j
    \end{cases}$}}
\end{equation*}
where $i\in\{1,\ldots,b\}$ is an anticoordinating and $j\in\{1,\ldots,b'\}$ is a coordinating type. 
The equilibria of the synchronous and asynchronous dynamics are the same since both require every individual to be satisfied with their strategies. 
What about the invariant sets?
Under the synchronous dynamics, same-type agents either all simultaneously or none switch strategies. 
Hence, the population state no longer evolves gradually by a distance of one from the previous state, but may undergo large jumps at each time step. 
As an immediate result, \eqref{L} and \eqref{R} may no longer hold, and the rich asynchronous dynamics may become limited to a small limit cycle. 

To analyze the synchronous dynamics, Granovetter \cite{granovetter1978threshold} (followed by \cite{granovetter1983threshold}) proposed the convenient approach of focusing on the evolution of the number of $\A$-players, $A(t)$, rather than the state $\x(t)$, and obtain a first-order difference equation $A(t+1)=F(A(t))$ for some function $F$. 
However, this is not the case with our model, because $A(t+1)$ is not a function of $A(t)$ only but also $\x(t)$ as seen in the definition of $\b$.
Nevertheless, due to the potential interesting insights of this approach, we consider the simplified population dynamics where $A_j$ in \eqref{eq:coor} and \eqref{eq:anti} is replaced with $A$; namely, instead of basing her decisions on the number of $\A$-players in the remaining of the population, agent $j$ bases her decision on the total number of $\A$-players in the population.
This simplifies $\b$ to $\hat{\b}$ as follows:
\begin{equation*}\scalebox{.91}{\text{$
    \hat{b}_i(\x) = 
    \begin{cases}
        n_i     & A(\x)\leq \tau_i \\
        0       & A(\x) > \tau_i
    \end{cases},\
    \hat{b}_j(\x) = 
    \begin{cases}
        n'_j    & A(\x)\geq \tau'_j \\
        0       & A(\x) < \tau'_j
    \end{cases}$}.}
\end{equation*}
Then the number of $\A$-playing coordinators, denoted by $A^c$, is governed by $A^c(t+1) = F^c(A(t))$, where $F^c$ is the cumulative distribution function of the coordinators' tempers. 
Moreover, the number of $\A$-playing anticoordinators, denoted by $A^a$, is governed by $A^a(t+1) = F^a(A(t))$, where $F^a(A)$ is the number of anticoordinators whose tempers are non-less than $A$.
Thus, the total number of $\A$-players is governed by 
\begin{equation}    \label{eq_synchronousDynamics}
        A(t+1) = F(A(t)),
\end{equation}
where $F=F^c+F^a$, which is plotted in Figure \ref{Fig2} for the population in Example \ref{example1}. 
% \begin{figure}[ht]
% 	\centering	\includegraphics[width=.9\linewidth]{Fig1.jpg}
%     \caption{}
% 	\label{Fig1}
% \end{figure} 
\begin{figure}[h]
	\centering	\includegraphics[width=1\linewidth]{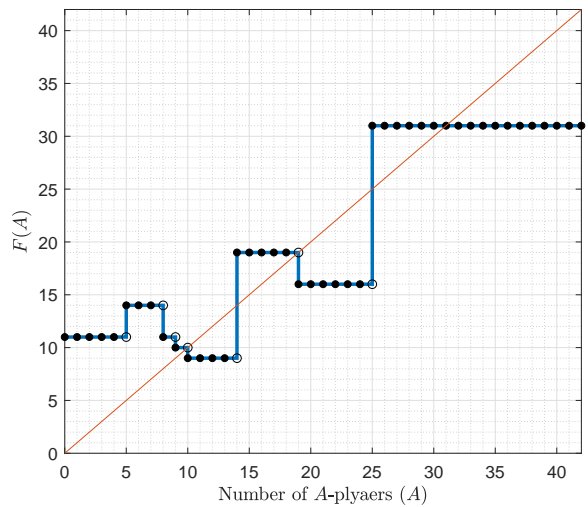}
    \caption{The evolution function of the modified synchronous population dynamics \eqref{eq_synchronousDynamics}.}
	\label{Fig2}
\end{figure} 
Intersections of $F$ with the line $F(A)=A$ results in the equilibria. 
As expected, the point $(A,F(A))=(31,31)$, is an equilibrium of the synchronous dynamics and correctly matches the number of $\A$-players at the equilibrium $\x^*$ that the asynchronous dynamics admit. 
The synchronous dynamics additionally admit two limit cycles, characterized by $\C_1=\{9,10\}$ and  $\C_2=\{16,19\}$, roughly corresponding to the first and second invariant sets $\O_{\tb{1}, \tr{5}, \tr{3}, \tb{1}}$ and $\O_{\tb{0}, \tr{2}, \tr{5}, \tb{3}}$. 
One can also easily find the domain of attractions from the graph: 
the domain of attraction of $\C_1$ is $\Phi_1 = \{1,\ldots,4,8,\ldots,13\}$, that of $\C_2$ is $\Phi_2 = \{5,6,7,14\ldots,24\}$, and that of the equilibrium is $\Phi^*=\{25,\ldots,42\}$.
It follows that all three are stable under the dynamics \eqref{eq_synchronousDynamics}.
Moreover, the equilibrium has the largest domain of attraction, and hence, is the most probable long-term outcome of the dynamics, given an arbitrary initial condition.

So the dynamics of the number of $\A$-players under synchronous updating provides useful insights for and may be considered as an ``approximation'' of the asynchronous dynamics; however, there are fundamental differences. 
For example, unlike the asynchronous case, here the limit cycles are only of length two, and the extremum number of $\A$-players in the limit cycles and invariant sets do not match. 
Also, the first limit cycle is stable under \eqref{eq_synchronousDynamics}, whereas the first positively invariant set is unstable under the asynchronous dynamics. 
Moreover, in the first limit cycle $\C_1$, type 3 and 4 anticoordinators are $\B$-fixed and never switch strategies, whereas they wander in the asynchronous dynamics. 
One may also note the somewhat surprising gap in the domain of attraction $\Phi_1$, where for $A=1,\ldots,4$ and $A=8,\ldots,13$, the solution trajectory reaches the first limit cycle, but for $A=5,6,7$, it reaches the second.
The situation is not much different under the original synchronous dynamics governed by $\b$. 
Our simulations show that then $\C_1$ and $\C_2$ will each include an additional state: $\{9,10,12\}$ and $\{16,19,20\}$, which are still different from the invariant sets in Example 1. 
Overall, the exact relationship between the synchronous and asynchronous dynamics remains unknown.

%%%%%%%%%%%%%%%%%%%%%%%%%%%%%%%%%%%
\section{Conclusion}
We have studied a well-mixed heterogeneous population of coordinators and anticoordinators.
We have explicitly characterized a class of positively invariant sets under the population dynamics.
If any such set does not include an equilibrium, it contains a minimally positively invariant subset where the solution trajectory will perpetually fluctuate and never equilibrate. 
In the context of collective decision-makers such as NGOs and firms, this means that observed oscillations in the decisions are not necessarily caused by errors in the decisions; namely, they are not a matter of chance. 
Nor do they imply a complex interaction network between the individuals. 
They are likely intrinsic to the dynamics.
Moreover, the oscillations are not unpredictable. 
For example, Corollary \ref{cor_exremumNumberOfAPlayers} bounds the number of individuals choosing a certain action over time. 
We also know if the oscillations are stable, and hence, persistent under small errors in the individuals' decisions.
These results pave the path to control the corresponding populations, for example, by providing incentives \cite{riehl2018incentive}, and lead them to a desired outcome where the oscillations are settled or modified.
Designing the control algorithms as well as fitting the dynamics to data remain as future work.

%Although obtaining the necessary and sufficient condition for a set to be positively invariant 
%The results help to understand observed oscillations in biological, social, and economic contexts and pave the path for controlling them. 
%We also prove that the coexistence of coordinating and anticoordinators can cause such behavior even in the absence of any structure constraint or noise (while a population of exclusively coordinating or anticoordinating is not capable of that). 

%% Important:
%Moreover, the existence of multiple invariant sets for the same population can be  explained away by the existence of multiple equilibria for populations of exclusively coordinators.  

%%%%%%%%%%%%%%%%%%%%%%%%%%%%%%%%%%%
\section*{Appendix}
The following results are straightforward.
We simplify the notation $A(\x(t))$ to $A(t)$.
\begin{lemma} \label{lem_updateRule}
    A type-$i$ anticoordinator playing $\A$ tends to play $\A$ (equivalently, does not tend to play $\B$) at time $t+1$ iff $A(t) \leq \tau_i +1$, %(equivalently $A(t) \leq \floor{\tau_i} +1$)
    and tends to play $\B$ iff $A(t) > \tau_i +1$.
    A type-$i$ anticoordinator playing $\B$ tends to play $\B$ iff $A(t)> \tau_i$, and tends to play $\A$ iff $A(t)\leq  \tau_i$.
    A type-$i$ coordinator playing $\A$ tends to play $\A$ iff $A(t) \geq \tau'_i +1$, and tends to play $\B$ iff $A(t) < \tau'_i +1$. %(or equivalently $A(t) \geq \ceil{\tau_i} +1$).
    A type-$i$ coordinator playing $\B$ tends to play $\A$ iff $A(t) \geq \tau'_i$, and tends to play $\B$ iff $A(t) < \tau'_i$. 
\end{lemma}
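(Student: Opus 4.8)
The plan is to reduce each of the eight assertions to a direct substitution into the two update rules \eqref{eq:coor} and \eqref{eq:anti}. The single fact that drives everything is the relation between $A_j(t)$, the number of $\A$-players other than agent $j$, and $A(t)$, the total number of $\A$-players in the population: if agent $j$ plays $\A$ at time $t$, then $A_j(t) = A(t) - 1$, whereas if she plays $\B$, then $A_j(t) = A(t)$. I would state and justify this one-line observation first, since it is the only nontrivial ingredient.

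Next I would split into the four natural cases indexed by the agent's type (coordinator versus anticoordinator) and her current strategy ($\A$ versus $\B$), and within each case translate the condition on $A_j(t)$ appearing in \eqref{eq:coor} or \eqref{eq:anti} into the corresponding condition on $A(t)$. For an anticoordinator (rule \eqref{eq:anti}), who tends to play $\A$ iff $A_j(t) \leq \tau_i$: substituting $A_j(t) = A(t) - 1$ for an $\A$-playing agent yields the threshold $A(t) \leq \tau_i + 1$ (and the complementary event $A(t) > \tau_i + 1$ for switching to $\B$), while substituting $A_j(t) = A(t)$ for a $\B$-playing agent yields $A(t) \leq \tau_i$ and $A(t) > \tau_i$. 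The two coordinator cases are handled identically using rule \eqref{eq:coor}, where the relevant comparison is $A_j(t) \geq \tau'_i$: the $\A$-playing coordinator picks up the shifted threshold $A(t) \geq \tau'_i + 1$, and the $\B$-playing coordinator retains $A(t) \geq \tau'_i$. Each sub-statement and its complement then read off immediately.

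Since every case reduces to whether or not one adds $1$ to the threshold, depending on whether the agent's own $\A$-contribution is counted in $A(t)$, the verification is purely arithmetic and no genuine obstacle arises. The only point requiring care is bookkeeping of the offset: one must consistently remember that the active agent's own strategy is included in $A(t)$ but excluded from $A_j(t)$, so that the $+1$ shift appears precisely for those agents who are themselves currently playing $\A$. Assembling the four cases, each split into its two complementary sub-statements, yields all eight claims of the lemma.
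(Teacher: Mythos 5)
Your proof is correct and is precisely the argument the paper has in mind: the paper omits a proof entirely, labeling the lemma ``straightforward,'' and the intended justification is exactly your substitution of $A_j(t) = A(t)-1$ (for an $\A$-player) or $A_j(t) = A(t)$ (for a $\B$-player) into update rules \eqref{eq:coor} and \eqref{eq:anti}. All eight threshold shifts check out, so there is nothing to add.
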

% \begin{proof}
%     Let agent $j$ be a type-$i$ anticoordinator, active at time $t$.
%     If $s_j(t) = \A$, \eqref{eq:anti}
%     % \begin{equation*}
%     %     s_j(t+1)=
%     %     	\left\{
%     %     	\begin{array}{l}
%     %     	A\qquad A_j(t) \leq \tau_i\\
%     %     	B\qquad A_j(t) > \tau_i\\
%     %     	\end{array},\right.
%     % \end{equation*}
%     turns into
%     \begin{equation*}
%         s_j(t+1)=
%         	\begin{cases}
%         	    \A &A(t) \leq \tau_i +1\\
%         	    \B &A(t) > \tau_i +1\\
%         	\end{cases}.
%     \end{equation*}
%     The case with $s_j(t) = B$ and the cases with a coordinator follow similarly.
%     % Let agent $j$ be a type-$i$ anticoordinator, active at time $t$.
%     % She updates her strategy as follows:
%     % \begin{equation*}
%     %     s_j(t+1)=
%     %     	\left\{
%     %     	\begin{array}{l}
%     %     	A\qquad A(t) \leq \tau_i\\
%     %     	B\qquad A(t) > \tau_i\\
%     %     	\end{array}.\right.
%     % \end{equation*}
%     % If a type-$i$ anticoordinator playing
% \end{proof}
\begin{lemma} \label{lem_updateRule2}
    A type-$i$ anticoordinator tends to play $\A$ at time $t+1$ if $A(t) \leq \tau_i$, 
    and tends to play $\B$ if $A(t) > \tau_i +1$.
    A type-$i$ coordinator tends to play $\A$ at time $t+1$ if $A(t) \geq \tau'_i +1$, and tends to play $\B$ if $A(t) < \tau'_i$.
\end{lemma}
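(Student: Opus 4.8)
The plan is to derive this statement as an immediate consequence of Lemma \ref{lem_updateRule}, which already records the tendency of an agent conditioned on her \emph{current} strategy. The key observation is that Lemma \ref{lem_updateRule2} drops that conditioning, so its hypotheses must coincide with the stronger of the two strategy-dependent conditions; that is, they must force the same tendency in the worst case over the agent's current play. I would therefore treat each of the four claims by splitting on whether the relevant agent currently plays $\A$ or $\B$ and retaining the binding bound.

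Concretely, for a type-$i$ anticoordinator, Lemma \ref{lem_updateRule} gives that an $\A$-player tends to play $\A$ iff $A(t)\leq\tau_i+1$, while a $\B$-player tends to play $\A$ iff $A(t)\leq\tau_i$. Since the latter bound is the tighter one and $A(t)\leq\tau_i$ implies $A(t)\leq\tau_i+1$, the hypothesis $A(t)\leq\tau_i$ forces the $\A$-tendency in both cases, proving the first claim. The $\B$-tendency is symmetric: an $\A$-player tends to play $\B$ iff $A(t)>\tau_i+1$ and a $\B$-player iff $A(t)>\tau_i$, so the tighter bound $A(t)>\tau_i+1$ forces $\B$ regardless of the current strategy.

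The coordinating claims follow the same template. An $\A$-playing type-$i$ coordinator tends to play $\A$ iff $A(t)\geq\tau'_i+1$ and a $\B$-playing one iff $A(t)\geq\tau'_i$; the binding hypothesis is thus $A(t)\geq\tau'_i+1$. Dually, $A(t)<\tau'_i$ is the binding condition forcing a $\B$-tendency for both current strategies. Alternatively, one could bypass Lemma \ref{lem_updateRule} entirely and argue directly from \eqref{eq:coor} and \eqref{eq:anti} using $A_j(t)\in\{A(t)-1,\,A(t)\}$ according to whether agent $j$ currently plays $\A$ or $\B$; this is precisely the substitution that produces the $\pm1$ gaps between the two regimes.

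I do not anticipate any genuine obstacle: the statement is strictly weaker than Lemma \ref{lem_updateRule}, and the only point requiring care is to select, in each of the four cases, the more restrictive of the two strategy-dependent thresholds so that the conclusion holds uniformly over the agent's current strategy.
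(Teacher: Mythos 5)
Your proposal is correct: taking, for each of the four claims, the more restrictive of the two strategy-dependent thresholds in Lemma \ref{lem_updateRule} (equivalently, substituting $A_j(t)\in\{A(t)-1,A(t)\}$ into \eqref{eq:coor} and \eqref{eq:anti}) is exactly the ``straightforward'' argument the paper has in mind, since it gives no proof and simply declares the result immediate. All four case checks in your write-up are accurate, so nothing further is needed.
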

% \begin{lemma} \label{lem_updateRule3}
%     If some type-$i$ anticoordinators are playing $\A$ and some playing $\B$, then they will not switch strategies iff $A = \floor{\tau_i}+1$.
% \end{lemma}

%%%%%%%%%%%%%%%%%%%%%%%%%%%%%%%%%%%	
	\bibliographystyle{IEEEtran}
	\bibliography{bib}%
\begin{IEEEbiography}%[{\includegraphics[width=1in,height=1.25in,clip,keepaspectratio, trim=100 500 100 100]{Pouria.jpg}}]
{Pouria Ramazi}
is currently an assistant professor at the Department of Mathematics and Statistics at Brock University, Canada. 
	He received the B.S. degree in electrical engineering in 2010 from University of Tehran, Iran, the M.S. degree in systems, control and robotics in 2012 from Royal Institute of Technology, Sweden, and the Ph.D. degree in systems and control in 2017 from the University of Groningen, The Netherlands. 
	He was a postdoctoral research associate with the Departments of Mathematical and Statistical Sciences and Computing Science at the University of Alberta from August 2017 to November 2020.
\end{IEEEbiography}
\begin{IEEEbiography}[]{Mohammad Hossein Roohi} is currently a postdoctoral research associate with the Departments of Electrical and Computer Engineering at the University of Alberta.
He received the B.Sc. and M.Sc. degrees in control systems from Isfahan University of Technology, Isfahan, Iran, in 2013 and 2016, respectively, and the Ph.D. degree in control systems in 2021 from the University of Alberta, Canada. 
\end{IEEEbiography}

\end{document}